\pgfplotsset{compat=1.16}
\tikzset{external/system call={pdflatex \tikzexternalcheckshellescape -interaction=batchmode -jobname "\image" "\texsource"; rm "\image".dpth "\image".log "\image".nlo "\image".atfi "\image".spl}}
\renewcommand*\env@matrix[1][r]{\hskip -\arraycolsep
  \let\@ifnextchar\new@ifnextchar
  \array{*\c@MaxMatrixCols #1}}
\newcommand{\ord}[1]{\mathcal{O}(#1)}
\newcommand{\pt}{\partial}
\newcommand{\ds}{\mathrm{d}s}
\newcommand{\dsigma}{\mathrm{d}\sigma}
\newcommand{\R}{\mathbb{R}}
\newcommand{\PS}{\mathcal{P}}
\newcommand{\dt}{d}
\newcommand{\LL}{{L}}
\newcommand{\EE}{{\mathcal E}}
\newcommand{\RR}{{\mathcal R}}
\newcommand {\beq} {\begin{equation}}
\newcommand {\eeq} {\end{equation}}
\newcommand {\beqa} {\begin{eqnarray}}
\newcommand {\eeqa} {\end{eqnarray}}
\newcommand {\beqann} {\begin{eqnarray*}}
\newcommand {\eeqann} {\end{eqnarray*}}
     \definecolor{pass}{rgb}{0,0,0.8}
     \definecolor{pass1}{rgb}{0,0,0.5}
\DeclareMathOperator{\expm}{\texttt{expm1}}
\DeclareMathOperator{\logp}{\texttt{log1p}}
\theoremstyle{plain}
\newtheorem{theorem}{Theorem}[section]
\newtheorem{lemma}[theorem]{Lemma}
\newtheorem{corollary}[theorem]{Corollary}
\newtheorem{remark}[theorem]{Remark}
\newcounter{EXAMPLEcounter}[section]
\renewcommand{\theEXAMPLEcounter}{\thesection.\arabic{EXAMPLEcounter}}
\newcommand{\example}{\refstepcounter{EXAMPLEcounter}%
  \textbf{Example~\theEXAMPLEcounter:}}
\tiny\color{gray},                    
\begin{document}
\title{Pointwise-in-time a posteriori error control for higher-order discretizations of time-fractional parabolic equations}
\author{Sebastian Franz\footnote{
          Institute of Scientific Computing, Technische Universit\"at Dresden, Germany.
          \mbox{e-mail}: sebastian.franz@tu-dresden.de}
        \and
        Natalia Kopteva\footnote{corr. author, Department of Mathematics and Statistics, University of Limerick, Ireland.
        \mbox{e-mail}: natalia.kopteva@ul.ie}
       }
\date{\today}
\maketitle
    \begin{abstract}
    Time-fractional parabolic equations with a Caputo time derivative are considered.
    For such equations, we
    explore and further develop the new methodology of the a-posteriori error estimation and adaptive time stepping proposed in \cite{Kopteva22}.
        We improve the earlier time stepping algorithm based on this theory, and specifically address its stable and efficient implementation
        in the context of high-order methods.
    The considered methods include an L1-2 method and continuous collocation methods of arbitrary order,
    for which adaptive temporal meshes are shown to yield optimal convergence rates  in the presence of solution singularities.
    \end{abstract}

  \textit{AMS subject classification (2010): 65M15} 

  \textit{Key words: time-fractional, subdiffusion, a posteriori error estimation, adaptive time stepping algorithm, higher order, collocation,
          L1-2 method, stable implementation}

  \section{Introduction}\label{sec:intro}

  We address the numerical solution of fractional-order parabolic equations, of order $\alpha\in(0,1)$, of the form
\beq\label{problem}
D_t^{\alpha}u+\LL u=f(x,t)\qquad\mbox{for}\;\;(x,t)\in\Omega\times(0,T],
\eeq
subject to an initial condition $u(\cdot,0)=u_0$ in $\Omega$, and the boundary condition $u=0$ on $\pt\Omega$ for $t>0$.
This problem is posed in a bounded Lipschitz domain  $\Omega\subset\R^d$ (where $d\in\{1,2,3\}$), and involves
a spatial linear second-order elliptic operator~$\LL=\LL(t)$ of the form
\beq\label{LL_def}
\LL u := -\sum_{i,j=1}^d \pt_{x_i}\!\bigl(a_{ij}(x,t)\,\pt_{x_j}\!u\bigr) + \sum_{i}^d b_i(x,t)\, \pt_{x_i}\!u+c(x,t)\,u,
\eeq
with a symmetric positive definite coefficient matrix $\{a_{ij}(x,t)\}_{i,j=1}^d$  $\forall (x,t)\in\Omega\times(0,T]$.
The Caputo fractional derivative in time, denoted here by $D_t^\alpha$, is  defined \cite{Diet10},
for $t>0$, by
\begin{equation}\label{CaputoEquiv}
D_t^{\alpha} u := J_t^{1-\alpha}(\pt_t u),\qquad
J_t^{1-\alpha} v(\cdot,t) :=  \frac1{\Gamma(1-\alpha)} \int_{0}^t(t-s)^{-\alpha}\, v(\cdot, s)\, ds,
\end{equation}
where $\Gamma(\cdot)$ is the Gamma function, and $\pt_t$ denotes the partial derivative in~$t$.

  The purpose of this paper is to explore and further develop the new methodology of the a-posteriori error estimation and adaptive time stepping proposed in \cite{Kopteva22} (see also a recent extension of this approach in \cite{Kopt_Stynes_apost22}).
  One distinctive feature of the present article is that we employ the approach of \cite{Kopteva22,Kopt_Stynes_apost22}
  in a wider context, to wide classes of temporal discretizations for \eqref{problem} of arbitrarily high order.
In comparison, only the L1 method was considered in \cite{Kopteva22,Kopt_Stynes_apost22}, while now we also address an L1-2 method proposed in \cite{GSZZ14} and a family of continuous collocation methods of arbitrary order.
It should be noted that despite a substantial literature on the a-priori error bounds for problem of type~\eqref{problem}, both
 on uniform
  and graded temporal meshes---see, e.g., \cite{JLZ19,Kopteva19,Kopteva21,Kopteva_Meng,Liao_etal_sinum2018,Liao_etal_sinum2019,LX16,SORG17} and references therein---%
the a-priori error analysis of the collocation methods appears very problematic on reasonably general meshes.
The adaptive algorithm based on our theory, by contrast,
yields reliable computed solutions and attains optimal convergence rates  in the presence of solution singularities
for all numerical approximations that we consider.

  We also note an interesting alternative approach
  to the a-posteriori error estimation for problems of type~\eqref{problem}
  recently proposed in \cite{B_Mark_apost}; however, the latter approach has been tested mainly
  on a-priori chosen meshes, and it remains unclear how it may be implemented in an adaptive time stepping algorithm (in view of the nonlocal nature of the estimators).

To give a flavour of the advantages in the accuracy of numerical approximations offered by our adaptive approach,
we compare the errors of 5 numerical methods on uniform temporal meshes (see Fig.~\ref{fig:apriorierr} left)
vs. adaptive meshes (Fig.~\ref{fig:apriorierr} right), with a striking improvement in the accuracy due
to the adaptive time stepping.
Here we consider the L1 method, an L1-2 method from \cite{GSZZ14}, and the continuous collocation methods of order 2, 4, and 8
(for details on the algorithm and the test problem, the reader is referred to Sections~\ref{sec_algorithm}--\ref{sec:numerics},
in particular, Section~\ref{ssec_numerics}).
Overall, here and in Section~\ref{sec:numerics}, we observe that the algorithm is capable of adapting the time steps to the initial singularity, as well to solution spikes away from the initial time.

    \begin{figure}[htb]
    \begin{center}
%
%
\begin{tikzpicture}

\begin{axis}[%
width=0.3\textwidth,
height=0.16\textwidth,
scale only axis,
xmode=log,
xmin=1,
xmax=100,
xminorticks=true,
xlabel style={font=\color{white!15!black}},
xlabel={$M$},
ymode=log,
ymin=1e-8,
ymax=1,
yminorticks=true,
axis background/.style={fill=white},
]
\addplot [color=blue,thick]
  table[row sep=crcr]{%
2	0.075509\\
4	0.05088\\
8	0.037281\\
16	0.028076\\
32	0.021359\\
64	0.016312\\
};

\addplot [color=black,thick]
  table[row sep=crcr]{%
2	0.040224\\
4	0.030707\\
8	0.023471\\
16	0.017961\\
32	0.013759\\
64	0.010547\\
};

\addplot [color=teal,thick]
  table[row sep=crcr]{%
2	0.025876\\
4	0.019836\\
8	0.015232\\
16	0.011713\\
32	0.0090149\\
64	0.0069408\\
};

\addplot [color=magenta,thick]
  table[row sep=crcr]{%
2	0.011153\\
4	0.0087687\\
8	0.0069173\\
16	0.0054678\\
32	0.0043231\\
64	0.0034127\\
};

\addplot [color=purple,thick]
  table[row sep=crcr]{%
2	0.075509\\
4	0.05088\\
8	0.037281\\
16	0.028076\\
32	0.021359\\
64	0.016312\\
};

\addplot [color=black]
  table[row sep=crcr]{%
5e0   1e-03\\
5e0   4e-04\\
5e1   4e-04\\
5e0   1e-03\\
};
\node[color=black,right] at (1.5e0,5e-4) {$M^{-\alpha}$};

\end{axis}

\end{tikzpicture}%
%
%
\begin{tikzpicture}

\begin{axis}[%
width=0.3\textwidth,
height=0.16\textwidth,
scale only axis,
xmode=log,
xmin=1,
xmax=100,
xminorticks=true,
xlabel style={font=\color{white!15!black}},
xlabel={$M$},
ymode=log,
ymin=1e-8,
ymax=1,
yminorticks=true,
yticklabels={},
axis background/.style={fill=white},
legend columns=1, 
legend style={at={(1.03,0.5)}, anchor=west, legend cell align=left, align=left, draw=white!15!black}
]
\addplot [color=blue,thick]
  table[row sep=crcr]{%
10	0.0048259\\
19	0.0014227\\
37	0.00043905\\
72	0.00013622\\
};
\addlegendentry{L1, error}

\addplot [color=purple,thick]
  table[row sep=crcr]{%
9	0.0046293\\
14	0.0014426\\
23	0.00041466\\
35	0.00013931\\
55	4.7361e-05\\
84	1.4227e-05\\
};
\addlegendentry{L1-2, error}

\addplot [color=black,thick]
  table[row sep=crcr]{%
4	0.0041742\\
7	0.0014139\\
11	0.00045195\\
17	0.00013164\\
26	3.8771e-05\\
40	1.2383e-05\\
63	3.9541e-06\\
97	1.1737e-06\\
};
\addlegendentry{coll(2), error}

\addplot [color=teal,thick]
  table[row sep=crcr]{%
3	0.0041239\\
4	0.0012828\\
6	0.00038772\\
9	0.00012446\\
12	3.9804e-05\\
16	1.1821e-05\\
21	4.0097e-06\\
29	1.223e-06\\
38	3.8003e-07\\
49	1.2133e-07\\
64	3.874e-08\\
84	1.2214e-08\\
};
\addlegendentry{coll(4), error}

\addplot [color=magenta,thick]
  table[row sep=crcr]{%
2	0.0034436\\
3	0.00098743\\
5	0.00031753\\
6	0.00010165\\
8	3.3698e-05\\
10	9.694e-06\\
12	3.0965e-06\\
15	9.8878e-07\\
18	3.1176e-07\\
21	9.954e-08\\
25	3.1781e-08\\
30	1.0147e-08\\
};
\addlegendentry{coll(8), error}


\addplot [color=black]
  table[row sep=crcr]{%
2e1   1e-02\\
6e1   1e-02\\
6e1   1.7e-03\\
2e1   1e-02\\
};
\node[color=black,right] at (2e1,1e-1) {$M^{-(2-\alpha)}$};

\addplot [color=black]
  table[row sep=crcr]{%
5e0   1e-05\\
5e0   6e-08\\
15e0  6e-08\\
5e0   1e-05\\
};
\node[color=black,right] at (1e0,6e-7) {$M^{-(5-\alpha)}$};

\end{axis}

\end{tikzpicture}%
    \end{center}\vspace{-0.3cm}
    \caption{$L_\infty(0,T;\, L_\infty(\Omega))$ errors for various methods vs. number of time steps $M$ for Example~\ref{Ex1}, $\alpha=0.4$
    on uniform meshes (left), and adaptive meshes (right)
    with residual barrier $\RR_0$, $\lambda=\pi^2$, and $\omega=\lambda/8$\label{fig:apriorierr}
    }
  \end{figure}
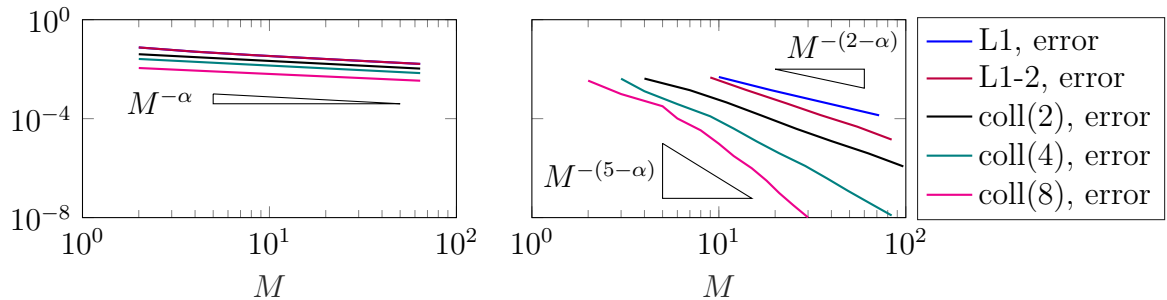

  The main findings of the paper are as follows.

  \begin{itemize}

  \item
  The considered adaptive technology is reliable in the sense that it is based on theoretical pointwise-in-time a-posteriori error bounds.
    Importantly, our adaptive algorithm is essentially independent of the method
    (or its order) and, additionally, does not require a preliminary a-priori error analysis either of the exact solution or  its numerical approximation.
    The latter may be important if the a-priori error analysis is lacking (such as for collocation methods)
     or limited to, e.g., uniform meshes.

    \item We demonstrate that high-order methods (of order up to as high as 8)
    exhibit a huge improvement in the accuracy when the time steps are chosen adaptively.
    In fact, our algorithm yields optimal convergence rates of order $q-\alpha$,
    where $q$ denotes the order of the method,
    either globally in time or in positive time
    (depending on the desired error profile used by the algorithm).
      At the same time, the algorithm is  capable of capturing
      {both initial singularities  and}
      local shocks/peaks in the solution.

      \item
    We make
  a few subtle improvements in the original version of the time stepping algorithm  \cite{Kopteva22}
  that substantially reduce the computational time.
    In particular, we modify the choice and search for a suitable initial time step,
    and also numerically test the algorithm parameters.

  \item  We provide clear and specific recommendations on the stable and efficient implementation of the resulting algorithm,
 which are essential, and not at all straightforward,  in the context of higher-order methods.
  Hence, we obtain
  numerically stable and efficient implementations for all considered methods
  (including computations of their residuals)
   with $\alpha$ at least within the range between $0.1$ and $0.999$ and
  for values of $TOL$ (used in the target bound for the error) as small as $10^{-8}$.



  \end{itemize}

  The paper is organised as follows.
  In Section~\ref{sec_apost} we recall a posteriori error estimates from \cite{Kopteva22} and
  give a few generalizations, such as for the  semilinear case.
  Next, in Section~\ref{sec_methods}, we introduce  numerical approximations for our problem \eqref{problem}
  and describe the evaluation of their residuals (which are required by the algorithm).
  The computationally stable implementation of these methods, as well the stable computation of the residuals,
  is addressed in Section~\ref{ssec_implem}, while
  our adaptive algorithm is described in Section~\ref{sec_algorithm}.
  Finally,
  in Section~\ref{sec:numerics}, we perform extensive numerical experiments to demonstrate the effectiveness and reliability of our adaptive approach.
  \medskip

\noindent{\it Notation.} We use the standard inner product $\langle\cdot,\cdot\rangle$ and the norm $\|\cdot\|$
in the space $L_2(\Omega)$, as well as the standard spaces 
$L_\infty(\Omega)$, $H^1_0(\Omega)$,
$L_{\infty}(0,t;\,L_2(\Omega))$, and $W^1_\infty(t',t'';\,L_2(\Omega))$
(see \cite[Section 5.9.2]{Evans10} for the notation used for functions of $x$ and~$t$).
The notation $v^+:=\max\{0,\,v\}$ is used for the positive part of a generic function $v$.

  \section{A posteriori error estimates}\label{sec_apost}

We start by recalling a few results from \cite{Kopteva22} and
then give a few generalizations, such as Lemma~\ref{lem_Linfty_new}, which will allow for more efficient algorithms,
and Corollary~\ref{cor_semi} for the semilinear case.
%
%
Define the operator $(D_t^\alpha+\lambda)^{-1}$ by
\beq\label{Dlammbda_inv}
(D_t^\alpha+\lambda)^{-1}v(t) :=
\int_0^t\! (t-s)^{\alpha-1}\,
E_{\alpha,\alpha}(-\lambda [t-s]^\alpha)\,v(s)\,ds
\quad\;\; \forall\,t>0.
\eeq
Here $E_{\alpha,\beta}(s)=\sum_{k=0}^\infty\{\Gamma(\alpha k+\beta)\}^{-1}s^k$
is a generalized Mittag-Leffler function.
The notation $(D_t^\alpha+\lambda)^{-1}$ reflects 
\cite[Remark~7.1]{Diet10}
that
\eqref{Dlammbda_inv} gives a solution $w$ of the equation
$(D_t^\alpha+\lambda)w(t)=v(t)$ for $t>0$ subject to $w(0)=0$.

\begin{theorem}[{\cite[Theorem~2.2]{Kopteva22}}]\label{the_L2}
Let $\LL$ in \eqref{problem}, for some $\lambda\in\R $, satisfy $\langle \LL v,v\rangle\ge \lambda\|v\|^2$ $\forall\,v\in H_0^1(\Omega)$.
Suppose a unique solution $u$ of \eqref{problem} and its approximation $u_h$ are
in $C([0,T]
;\,L_2(\Omega)) \cap W^1_\infty(\epsilon,t;\,L_2(\Omega))$ for any $0<\epsilon<t\le T$,
and also in
$H^1_0(\Omega)$ for any $t>0$,
while
$u_h(\cdot,0)= u_0$.
Then
the error of the latter is bounded in terms of its residual $R_h(\cdot,t)=(D_t^\alpha+\LL)u_h(\cdot,t)-f(\cdot,t)$
as follows:
\begin{align}
\label{L2_error}
\|(u_h-u)(\cdot,t)\|&\le (D_t^\alpha+\lambda)^{-1}\|R_h(\cdot, t)\|\qquad\forall\, t>0.
\end{align}
\end{theorem}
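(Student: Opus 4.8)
The plan is an energy argument that reduces the vector-valued error equation to a scalar fractional differential inequality for $\phi(t):=\|(u_h-u)(\cdot,t)\|$, followed by a fractional comparison principle. Set $e:=u_h-u$. Since $u$ solves \eqref{problem} and $u_h(\cdot,0)=u_0$, the error satisfies $(D_t^\alpha+\LL)e(\cdot,t)=R_h(\cdot,t)$ for $t>0$, with $e(\cdot,0)=0$ and $e=0$ on $\pt\Omega$; under the stated regularity $e(\cdot,t)\in H^1_0(\Omega)$ and the equation holds with $R_h(\cdot,t)\in L_2(\Omega)$. Testing this equation against $e(\cdot,t)$, using the coercivity hypothesis $\langle\LL e,e\rangle\ge\lambda\|e\|^2$ and Cauchy--Schwarz on the right, gives
\[
\langle D_t^\alpha e(\cdot,t),\,e(\cdot,t)\rangle+\lambda\|e(\cdot,t)\|^2\ \le\ \|R_h(\cdot,t)\|\;\|e(\cdot,t)\|,\qquad t>0.
\]

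The key analytic ingredient is the pointwise ``fractional chain-rule'' inequality
\[
\langle D_t^\alpha e(\cdot,t),\,e(\cdot,t)\rangle\ \ge\ \|e(\cdot,t)\|\;D_t^\alpha\|e(\cdot,t)\|,\qquad t>0,
\]
which I would derive from the equivalent nonlocal (Marchaud-type) representation
\[
D_t^\alpha g(t)=\frac{g(t)-g(0)}{\Gamma(1-\alpha)\,t^\alpha}+\frac{\alpha}{\Gamma(1-\alpha)}\int_0^t\frac{g(t)-g(s)}{(t-s)^{\alpha+1}}\,\ds,
\]
applied to $g=e(\cdot,\cdot)$ and to $g=\|e(\cdot,\cdot)\|$ and compared term by term: it suffices to note that $\langle e(\cdot,t)-e(\cdot,s),e(\cdot,t)\rangle=\|e(\cdot,t)\|^2-\langle e(\cdot,s),e(\cdot,t)\rangle\ge\|e(\cdot,t)\|(\|e(\cdot,t)\|-\|e(\cdot,s)\|)$ by Cauchy--Schwarz, and similarly for the $s=0$ term. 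Together with the energy inequality this yields, for $\phi(t):=\|e(\cdot,t)\|$, that $\phi(t)\bigl(D_t^\alpha\phi(t)+\lambda\phi(t)-\|R_h(\cdot,t)\|\bigr)\le0$, hence $D_t^\alpha\phi(t)+\lambda\phi(t)\le\|R_h(\cdot,t)\|$ wherever $\phi(t)>0$; the times where $\phi$ vanishes cause no difficulty, as one may repeat the argument with $\phi_\delta:=(\|e(\cdot,\cdot)\|^2+\delta^2)^{1/2}\ge\delta>0$ (the same Marchaud-plus-Cauchy--Schwarz estimate, now with two-component vectors) and let $\delta\to0$ at the end.

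It then remains to integrate this scalar inequality by a fractional comparison principle. Let $w:=(D_t^\alpha+\lambda)^{-1}\|R_h\|$ be the right-hand side of \eqref{L2_error}; by the remark following \eqref{Dlammbda_inv}, $w$ solves $(D_t^\alpha+\lambda)w=\|R_h\|$ with $w(0)=0$, so $z:=w-\phi$ satisfies $(D_t^\alpha+\lambda)z\ge0$ and $z(0)=0$. The inversion identity $z=(D_t^\alpha+\lambda)^{-1}\bigl[(D_t^\alpha+\lambda)z\bigr]$ together with the nonnegativity of the kernel $(t-s)^{\alpha-1}E_{\alpha,\alpha}(-\lambda[t-s]^\alpha)$ in \eqref{Dlammbda_inv} for every $\lambda\in\R$ (all Taylor coefficients of $E_{\alpha,\alpha}$ are positive, and $E_{\alpha,\alpha}$ is completely monotone on $(-\infty,0]$ for $0<\alpha<1$) then forces $z\ge0$, i.e.\ $\phi\le w$, which is precisely \eqref{L2_error}.

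The genuinely delicate part, and the step I expect to cost the most care, is rigour at and near $t=0$: justifying that $D_t^\alpha e=J_t^{1-\alpha}\pt_t e$ and the Marchaud representation above are valid given only $e\in C([0,T];L_2(\Omega))\cap W^1_\infty(\epsilon,T;L_2(\Omega))$ for every $\epsilon>0$ with $e(\cdot,0)=0$ (where $\pt_t e$ may blow up like $t^{\alpha-1}$ as $t\to0^+$), and that the inversion identity in the comparison step remains legitimate under this limited regularity and the condition $z(0)=0$. Once these measure-theoretic and regularity points are settled, the remainder is just the elementary energy estimate together with positivity of the resolvent kernel.
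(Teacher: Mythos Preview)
Your proposal is correct and follows essentially the same route as the paper: test the error equation against $e$, invoke the fractional chain-rule inequality $\langle D_t^\alpha e,e\rangle\ge\|e\|\,D_t^\alpha\|e\|$ (which the paper cites as \cite[Lemma~2.8]{Kopteva22}), deduce the scalar inequality $(D_t^\alpha+\lambda)\|e\|\le\|R_h\|$, and then apply the nonnegative resolvent $(D_t^\alpha+\lambda)^{-1}$. Your Marchaud-representation derivation of the chain-rule bound and the $\phi_\delta$ regularization for the zero set are reasonable ways to fill in details that the paper simply imports from the cited lemma.
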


Note that the key ingredient in the proof of the above result is the bound \cite[Lemma~2.8]{Kopteva22}
$$
\langle  D_t^\alpha v(\cdot,t),\,v(\cdot,t) \rangle \ge \bigl (D_t^\alpha \|v(\cdot,t)\|\bigr)\|v(\cdot,t)\|\qquad\forall\,t>0,
$$
valid for any
$v\in L_{\infty}(0,t;\,L_2(\Omega)) \cap W^1_\infty(\epsilon,t;\,L_2(\Omega))$ for any $0<\epsilon<t\le T$,
subject to $v(\cdot,0)=0$.
Hence, one  gets
$(D_t^\alpha+\lambda) \|(u_h-u)(\cdot,t)\|\le \|R_h(\cdot,t)\|$ $\forall\, t>0$, which then yields~\eqref{L2_error}.

Furthermore, one gets a version of Theorem~\ref{the_L2} for the $L_\infty(\Omega)$ norm.

\begin{theorem}[{\cite[Theorem~3.2]{Kopteva22}}]\label{the_Linfty}
Suppose that the coefficients of $\LL$ in \eqref{problem} are sufficiently smooth, and $c\ge\lambda$ for some $\lambda\in\R $.
Let a unique solution $u$ of \eqref{problem} and its approximation $u_h$
 be in $ C^2(\Omega)$ for each~$t>0$, and, for each $x\in\Omega$, belong to $W^1_\infty(\epsilon,t)$ for any $0<\epsilon<t\le T$, while
$u_h(\cdot,0)= u_0$.
Then the error bound \eqref{L2_error} of Theorem~\ref{the_L2} 
remains true with $\|\cdot\|=\|\cdot\|_{L_2(\Omega)}$
replaced by $\|\cdot\|_{L_\infty(\Omega)}$.
\end{theorem}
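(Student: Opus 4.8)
The plan is to reproduce the argument sketched above for Theorem~\ref{the_L2}, with the inner-product inequality \cite[Lemma~2.8]{Kopteva22} replaced by a pointwise comparison (a weak maximum principle) for the operator $D_t^\alpha+\LL$. Put $e:=u_h-u$, so that $(D_t^\alpha+\LL)e=R_h$ in $\Omega\times(0,T]$, with $e(\cdot,0)=0$ and $e=0$ on $\pt\Omega$. Introduce the barrier $B(t):=(D_t^\alpha+\lambda)^{-1}\|R_h(\cdot,t)\|_{L_\infty(\Omega)}$, which by the remark after \eqref{Dlammbda_inv} solves $(D_t^\alpha+\lambda)B=\|R_h(\cdot,\cdot)\|_{L_\infty(\Omega)}$ with $B(0)=0$. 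A first observation I would record is that $B\ge0$: the kernel $(t-s)^{\alpha-1}E_{\alpha,\alpha}(-\lambda[t-s]^\alpha)$ in \eqref{Dlammbda_inv} is nonnegative for every $\lambda\in\R$, since $E_{\alpha,\alpha}$ has positive Taylor coefficients (settling $\lambda\le0$) and $E_{\alpha,\alpha}(-x)\ge0$ for $x\ge0$ when $\alpha\in(0,1)$.

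Next I would verify that $w^{\pm}:=\pm e-B$ satisfies
\beq
(D_t^\alpha+\LL)w^{\pm}\le0\ \ \text{in }\Omega\times(0,T],\qquad w^{\pm}(\cdot,0)=0,\qquad w^{\pm}\le0\ \ \text{on }\pt\Omega .
\eeq
Indeed, as $B=B(t)$ does not depend on $x$, one has $\LL B=cB$, hence $(D_t^\alpha+\LL)B=(D_t^\alpha+\lambda)B+(c-\lambda)B=\|R_h\|_{L_\infty(\Omega)}+(c-\lambda)B\ge\|R_h\|_{L_\infty(\Omega)}$, using $c\ge\lambda$ together with $B\ge0$; therefore $(D_t^\alpha+\LL)w^{\pm}=\pm R_h-(D_t^\alpha+\LL)B\le\pm R_h-\|R_h\|_{L_\infty(\Omega)}\le0$. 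The smoothness imposed on the coefficients of $\LL$ and the regularity assumed on $u$ and $u_h$ are precisely what makes these identities, and the pointwise manipulations below, legitimate.

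It then remains to deduce from the displayed differential inequality that $w^{\pm}\le0$ on $\Omega\times(0,T]$, i.e.\ $|e(\cdot,t)|\le B(t)$ pointwise, which is the asserted $L_\infty(\Omega)$ bound. I would prove this maximum principle by contradiction: if $\sup_{\bar\Omega\times[0,t^{\ast}]}w^{\pm}>0$, it is attained at some $(x_0,t_0)$ with $x_0\in\Omega$ and $t_0\in(0,t^{\ast}]$ (the parabolic-boundary values of $w^{\pm}$ being $\le0$). At $(x_0,t_0)$ we have $\nabla_xw^{\pm}=0$ and the spatial Hessian is negative semidefinite, so positive-definiteness of $\{a_{ij}\}$ gives $\LL w^{\pm}(x_0,t_0)\ge c(x_0,t_0)\,w^{\pm}(x_0,t_0)$ (the first-order term drops out, and $\mathrm{tr}$ of the product of a positive-definite and a negative-semidefinite matrix is $\le0$), while the identity $D_t^\alpha g(t_0)=\frac1{\Gamma(1-\alpha)}\bigl(\alpha\int_0^{t_0}\frac{g(t_0)-g(s)}{(t_0-s)^{1+\alpha}}\,ds+\frac{g(t_0)-g(0)}{t_0^{\alpha}}\bigr)$, applied to $g=w^{\pm}(x_0,\cdot)$ (for which $g(0)=0$ and $g(t_0)=\max_{[0,t_0]}g$), yields $D_t^\alpha w^{\pm}(x_0,t_0)\ge w^{\pm}(x_0,t_0)/(\Gamma(1-\alpha)t_0^{\alpha})>0$. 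Combining these with the differential inequality gives $0\ge w^{\pm}(x_0,t_0)\bigl(1/(\Gamma(1-\alpha)t_0^{\alpha})+c(x_0,t_0)\bigr)$, which is a contradiction whenever $c(x_0,t_0)\ge0$, and also whenever $t_0$ is small enough; the general case then follows by exhausting $[0,T]$ with finitely many subintervals of length below a threshold depending only on $\alpha$ and $\|(-c)^{+}\|_{L_\infty(\Omega\times(0,T])}$, using on each subinterval the nonpositivity of $w^{\pm}$ already established at earlier times to bound the first integral above from below.

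The step I expect to be the main obstacle is exactly this last one: making the pointwise maximum-principle argument fully rigorous under the deliberately weak regularity assumed on $u$ and $u_h$ near $t=0$, and in particular handling $\lambda<0$ (so that $c$ need not be bounded below by $0$), where the one-shot contradiction fails and one must invoke either the subinterval induction above or, equivalently, the nonnegativity of the Mittag–Leffler kernel in \eqref{Dlammbda_inv}.
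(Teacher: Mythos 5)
Your argument is correct and follows essentially the same route as the paper and its cited source: compare $\pm(u_h-u)$ with the $x$-independent barrier $B(t)=(D_t^\alpha+\lambda)^{-1}\|R_h(\cdot,t)\|_{L_\infty(\Omega)}$, note $(D_t^\alpha+\LL)B\ge\|R_h\|_{L_\infty(\Omega)}$ using $c\ge\lambda$ and $B\ge0$, and conclude via the maximum principle for $D_t^\alpha+\LL$ — exactly the mechanism the paper uses in the proof of Lemma~\ref{lem_Linfty_new} and delegates to \cite{Kopteva_ML_max_pr} for $\lambda$ of arbitrary sign. The only difference is that you prove the fractional maximum principle yourself (Caputo derivative at a positive maximum plus the subinterval induction for negative $c$), which is a correct self-contained substitute for that citation, modulo the acknowledged regularity technicalities near $t=0$.
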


\begin{remark}
Note that in \cite{Kopteva22}, the above Theorem~\ref{the_Linfty} was given for the case $\lambda\ge0$ (and also for $L$ without mixed derivatives). But in view of a more recent paper \cite{Kopteva_ML_max_pr} addressing the maximum principle for the case of
a reaction coefficient of arbitrary sign (see also \cite{Luchko_Yam_2017} for a self-adjoint time-independent $\LL$), the proof in \cite{Kopteva22} also applies to this more general case.
\end{remark}

While Theorems~\ref{the_L2} and~\ref{the_Linfty} give computable a-posteriori error estimates on any given temporal mesh,
it is not immediately clear
 how the time steps may be chosen adaptively to attain a certain solution accuracy, or, more ambitiously, a certain pointwise-in-time error profile. This is addressed by the next result, which is a version of  \cite[Corollary~2.3]{Kopteva22}.

\begin{corollary}[residual barrier]\label{cor1_L2}
Suppose that $p\in\{2,\infty\}$, and  for some non-negative barrier function $\EE \in W^1_\infty(\epsilon,t)$ for any $0<\epsilon<t\le T$,
such that  $\lim_{t\to 0^+}\EE(t)\ge 0$ exists,
one
has
\beq\label{R_h_bound}
\|R_h(\cdot,t)\|_{L_p(\Omega)}\le 
(D_t^\alpha+\lambda)\EE(t)\qquad\forall\,t>0.
\eeq
Then,
under the conditions of Theorem~\ref{the_L2} if $p=2$, and under the conditions of Theorem~\ref{the_Linfty} if $p=\infty$,
one has $\|(u_h-u)(\cdot,t)\|_{L_p(\Omega)}\le \EE(t)$ $\forall\,t\ge0$.
\end{corollary}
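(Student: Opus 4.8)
The plan is to apply the solution operator $(D_t^\alpha+\lambda)^{-1}$ from \eqref{Dlammbda_inv} to the assumed residual bound \eqref{R_h_bound}, insert the result into the error estimate \eqref{L2_error} of Theorem~\ref{the_L2} (for $p=2$) or Theorem~\ref{the_Linfty} (for $p=\infty$), and then use two facts about $(D_t^\alpha+\lambda)^{-1}$: that it is \emph{monotone}, i.e.\ maps non-negative functions to non-negative functions, and that it \emph{inverts} $(D_t^\alpha+\lambda)$ on functions vanishing at $t=0$. Granting these, one would chain, for every $t>0$,
\[
\|(u_h-u)(\cdot,t)\|_{L_p(\Omega)}\;\le\;(D_t^\alpha+\lambda)^{-1}\|R_h(\cdot,t)\|_{L_p(\Omega)}\;\le\;(D_t^\alpha+\lambda)^{-1}\bigl[(D_t^\alpha+\lambda)\EE\bigr](t)\;\le\;\EE(t),
\]
where the first step is \eqref{L2_error}, the second is monotonicity applied to the function $(D_t^\alpha+\lambda)\EE-\|R_h(\cdot,t)\|_{L_p(\Omega)}$ (non-negative by \eqref{R_h_bound}), and the third is the bound $(D_t^\alpha+\lambda)^{-1}[(D_t^\alpha+\lambda)\EE]\le\EE$ established below; at $t=0$ the estimate is trivial, since $u_h(\cdot,0)=u_0=u(\cdot,0)$ and $\lim_{t\to0^+}\EE(t)\ge0$.

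For monotonicity, note that $(D_t^\alpha+\lambda)^{-1}$ is convolution against the kernel $k_\lambda(\tau)=\tau^{\alpha-1}E_{\alpha,\alpha}(-\lambda\tau^\alpha)$, so it suffices that $k_\lambda\ge0$ on $(0,\infty)$ for each $\lambda\in\R$. For $\lambda\ge0$ this follows from the complete monotonicity of $E_{\alpha,\alpha}(-\,\cdot\,)$ on $[0,\infty)$, valid for $0<\alpha\le1$; for $\lambda<0$ it is immediate, since then the argument $-\lambda\tau^\alpha$ is positive and all terms of the series $E_{\alpha,\alpha}(z)=\sum_{k\ge0}z^k/\Gamma(\alpha k+\alpha)$ are positive. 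The same two observations give $E_\alpha(-\lambda t^\alpha)\ge0$ for all $t\ge0$ and $\lambda\in\R$, which is used next.

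For the bound $(D_t^\alpha+\lambda)^{-1}[(D_t^\alpha+\lambda)\EE]\le\EE$, put $\EE_0:=\lim_{t\to0^+}\EE(t)\ge0$ and $\tilde\EE:=\EE-\EE_0$, so $\tilde\EE(0)=0$. As the Caputo derivative annihilates constants, $(D_t^\alpha+\lambda)\EE=\lambda\EE_0+(D_t^\alpha+\lambda)\tilde\EE$. Since $t\mapsto1-E_\alpha(-\lambda t^\alpha)$ vanishes at $t=0$ and $(D_t^\alpha+\lambda)$ maps it to the constant function $\lambda$, it equals $(D_t^\alpha+\lambda)^{-1}$ of that constant function; combining this (after pulling out the scalar $\EE_0$) with the inversion property applied to $\tilde\EE$ gives
\[
(D_t^\alpha+\lambda)^{-1}\bigl[(D_t^\alpha+\lambda)\EE\bigr](t)=\EE_0\bigl(1-E_\alpha(-\lambda t^\alpha)\bigr)+\tilde\EE(t)=\EE(t)-\EE_0\,E_\alpha(-\lambda t^\alpha)\;\le\;\EE(t),
\]
the last inequality because $\EE_0\ge0$ and $E_\alpha(-\lambda t^\alpha)\ge0$. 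With the display of the first paragraph, this proves the corollary.

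The delicate point is not any of these computations but the functional setting: $\EE$ is only assumed to be in $W^1_\infty(\epsilon,t)$ for $0<\epsilon<t\le T$ with a finite limit at $t=0$, so $D_t^\alpha\EE=J_t^{1-\alpha}(\pt_t\EE)$ and the inversion identity $(D_t^\alpha+\lambda)^{-1}(D_t^\alpha+\lambda)\tilde\EE=\tilde\EE$ have to be understood within precisely the low-regularity class in which Theorems~\ref{the_L2}--\ref{the_Linfty} place $u_h-u$; this is the technical content inherited from \cite{Kopteva22}. To bypass forming $(D_t^\alpha+\lambda)^{-1}(D_t^\alpha+\lambda)\EE$, one may instead begin from the pointwise inequality $(D_t^\alpha+\lambda)\|(u_h-u)(\cdot,t)\|_{L_p(\Omega)}\le\|R_h(\cdot,t)\|_{L_p(\Omega)}$ recorded just after Theorem~\ref{the_L2}, subtract \eqref{R_h_bound} from it, and apply the fractional comparison principle ``$(D_t^\alpha+\lambda)w\le0$ together with $\limsup_{t\to0^+}w(t)\le0$ imply $w\le0$'' --- itself a consequence of $k_\lambda\ge0$ and of $E_\alpha(-\lambda t^\alpha)\ge0$ --- to $w(t)=\|(u_h-u)(\cdot,t)\|_{L_p(\Omega)}-\EE(t)$.
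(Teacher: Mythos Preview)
Your argument is correct. The paper does not supply its own proof of this corollary; it simply records it as ``a version of \cite[Corollary~2.3]{Kopteva22}'' and moves on, so there is no detailed paper proof to compare against line by line.

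That said, your two routes match the two natural ways to read the surrounding text. The paper derives Theorem~\ref{the_L2} from the pointwise inequality $(D_t^\alpha+\lambda)\|(u_h-u)(\cdot,t)\|\le\|R_h(\cdot,t)\|$ and then applies $(D_t^\alpha+\lambda)^{-1}$; your first argument continues this chain one more step by using positivity of the kernel $k_\lambda(\tau)=\tau^{\alpha-1}E_{\alpha,\alpha}(-\lambda\tau^\alpha)$ together with the explicit identity $(D_t^\alpha+\lambda)^{-1}[(D_t^\alpha+\lambda)\EE]=\EE-\EE(0^+)\,E_\alpha(-\lambda t^\alpha)\le\EE$. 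Your alternative---apply a fractional comparison principle directly to $w(t)=\|(u_h-u)(\cdot,t)\|_{L_p(\Omega)}-\EE(t)$---is exactly the style the paper adopts when it \emph{does} give a proof nearby (Lemma~\ref{lem_Linfty_new}), and it has the advantage you note: it sidesteps forming $(D_t^\alpha+\lambda)^{-1}(D_t^\alpha+\lambda)\EE$ in the low-regularity class where $\EE$ may be discontinuous at $t=0$. Your caveat about that functional setting is well placed; it is precisely the technical content deferred to \cite{Kopteva22}.
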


Possible choices of $\EE$ are discussed in Section~\ref{ss_EE}.
Note that while $\EE \in W^1_\infty(\epsilon,t)$ for any $0<\epsilon<t\le T$ implies that $\EE\in C(0,T]$, it is convenient to choose
$\EE$ such that $\EE(0)=0$ and $\EE(0^+):=\lim_{t\to 0^+}\EE(t)> 0$, i.e. discontinuous at $t=0$.
{(To be more precise, setting $\EE(0)=0$ yields the least restrictive barrier on the residual, while retaining $\EE(t)\ge 0$ $\forall\,t\ge0$ \cite{Kopteva22}.)}

Note that for some operators, such as $\LL=-\triangle=-\sum_{i=1}^d\partial_{x_i}^2$, Theorem~\ref{the_Linfty} is applicable with $\lambda=0$,
while a negative reaction coefficient $c$ in \eqref{LL_def} would imply that $\lambda<0$, which would limit the applicability of our results in
Section~\ref{ss_EE}. To rectify this, we now establish an improved version of
Corollary~\ref{cor1_L2} for $p=\infty$.

\begin{lemma}[improved residual barrier for $L_\infty(\Omega)$]\label{lem_Linfty_new}
Suppose that for $\lambda,{\omega}\in\R$, where $\omega\ge 0$, there exists a function $g\in C^2(\Omega)$ such that
$\LL g\ge \lambda$ and $1\le g\le 1+\omega$ in $\Omega$
(if $\LL=\LL(t)$, then $\LL(t) g\ge \lambda$ $\forall\,t>0$).
Also, suppose that
for some non-negative barrier function $\EE \in W^1_\infty(\epsilon,t)$ for any $0<\epsilon<t\le T$,
such that  $\lim_{t\to 0^+}\EE(t)\ge 0$ exists and $\omega D_t^\alpha\EE(t)\ge 0$  $\forall\,t>0$,
one
has
\beq\label{R_h_bound_new}
\|R_h(\cdot,t)\|_{L_\infty}\le 
\frac{(D_t^\alpha+\lambda)\EE(t)}{1+\omega}\qquad\forall\,t>0.
\eeq
Then,
under the conditions  of Theorem~\ref{the_L2},
one has $\|(u_h-u)(\cdot,t)\|_{L_\infty}\le \EE(t)$ $\forall\,t\ge0$.
\end{lemma}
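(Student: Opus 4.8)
The plan is to reduce Lemma~\ref{lem_Linfty_new} to the already-established Theorem~\ref{the_Linfty} (or rather to the residual-barrier mechanism of Corollary~\ref{cor1_L2}) by a clever change of barrier function, using the auxiliary function $g$ as a multiplier. The key observation is that the function $\widetilde\EE(t) := g(x)\cdot$(something) will not work directly since $\EE$ depends only on $t$; instead, I would define a new, larger scalar barrier and exploit the lower bound $\LL g\ge\lambda$ to absorb the operator term.

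First I would set $\widetilde\EE(t) := (1+\omega)^{-1}\EE(t)$ is \emph{not} the right scaling either; rather, consider the space-time function $v(x,t) := g(x)\,\widetilde\EE(t)$ for a suitable scalar $\widetilde\EE$, and check what equation it satisfies. Since $g$ is independent of $t$, we have $D_t^\alpha v = g\,D_t^\alpha\widetilde\EE$, and $\LL v = \widetilde\EE\,\LL g$. Hence, using $1\le g\le 1+\omega$ and $\LL g\ge\lambda$ together with $\widetilde\EE\ge 0$ and $D_t^\alpha\widetilde\EE\ge 0$ (this is where the hypothesis $\omega D_t^\alpha\EE\ge0$, i.e. $D_t^\alpha\EE\ge0$ when $\omega>0$, is needed; when $\omega=0$ the statement reduces to Corollary~\ref{cor1_L2}), one gets the pointwise lower bound
\[
(D_t^\alpha+\LL)v = g\,D_t^\alpha\widetilde\EE + \widetilde\EE\,\LL g \;\ge\; D_t^\alpha\widetilde\EE + \lambda\,\widetilde\EE = (D_t^\alpha+\lambda)\widetilde\EE.
\]
The idea is then to run the comparison argument behind Theorem~\ref{the_Linfty} with this $v$ in the role of the barrier: one shows $|u_h-u|\le v$ pointwise provided the residual $R_h$ is dominated by $(D_t^\alpha+\LL)v$ from below in the appropriate sense. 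Choosing $\widetilde\EE=\EE$ and using $v\le (1+\omega)\EE$ from $g\le 1+\omega$, the condition $\|R_h\|_{L_\infty}\le (D_t^\alpha+\lambda)\EE/(1+\omega)$ is exactly what guarantees $\|R_h\|_{L_\infty}\le (D_t^\alpha+\LL)v$ pointwise (after noting $(D_t^\alpha+\lambda)\EE\le (1+\omega)\cdot(D_t^\alpha+\LL)v/(1+\omega)$… more carefully, one needs $(D_t^\alpha+\lambda)\widetilde\EE \ge (1+\omega)\|R_h\|$, which the hypothesis delivers). Then the maximum-principle-type argument gives $|u_h - u|\le v \le (1+\omega)\EE$; but this overshoots by a factor $1+\omega$, so I would instead \emph{normalize} by taking $v := g\,\widetilde\EE$ with $\widetilde\EE := (1+\omega)^{-1}\EE$ from the start — no: then the conclusion becomes $|u_h-u|\le v\le\EE$ only if $g\le 1+\omega$ yields $v = g(1+\omega)^{-1}\EE \le \EE$, which holds. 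So set $v := (1+\omega)^{-1}g\,\EE$, check $(D_t^\alpha+\LL)v\ge (1+\omega)^{-1}(D_t^\alpha+\lambda)\EE \ge \|R_h\|_{L_\infty} \ge \|R_h(\cdot,t)\|$ pointwise by \eqref{R_h_bound_new}, and conclude $\|(u_h-u)(\cdot,t)\|_{L_\infty}\le \|v(\cdot,t)\|_{L_\infty}\le\EE(t)$.

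The remaining and main step is to justify the comparison principle: that under the conditions of Theorem~\ref{the_L2}, if a space-time function $v\ge 0$ satisfies $v(\cdot,0)=0$ and $(D_t^\alpha+\LL)v(\cdot,t)\ge \|R_h(\cdot,t)\|_{L_\infty}$ for all $t>0$, then $\|(u_h-u)(\cdot,t)\|_{L_\infty}\le\|v(\cdot,t)\|_{L_\infty}$. This is essentially the content of the proof of Theorem~\ref{the_Linfty} in \cite{Kopteva22}: the error $e:=u_h-u$ solves $(D_t^\alpha+\LL)e = R_h$ with $e(\cdot,0)=0$, and one compares $e$ against $\pm v$ using the fractional-parabolic maximum principle (as extended in \cite{Kopteva_ML_max_pr} to reaction coefficients of arbitrary sign). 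I would invoke exactly this machinery rather than re-prove it: the functions $v\mp e$ satisfy $(D_t^\alpha+\LL)(v\mp e)= (D_t^\alpha+\LL)v \mp R_h \ge \|R_h\|_{L_\infty}\mp R_h\ge 0$ and vanish at $t=0$ (and on $\pt\Omega$, since $e=0$ there and $v\ge0$), so by the maximum principle $v\mp e\ge 0$, i.e. $|e|\le v$ pointwise in $x$ and $t$, whence $\|e(\cdot,t)\|_{L_\infty}\le\|v(\cdot,t)\|_{L_\infty}$.

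The hard part will be bookkeeping of the regularity hypotheses — ensuring that $v = (1+\omega)^{-1}g\,\EE$ inherits enough smoothness in $x$ (from $g\in C^2(\Omega)$) and in $t$ (from $\EE\in W^1_\infty(\epsilon,t)$, matching the $W^1_\infty$ regularity demanded of $u,u_h$ for each $x$) so that the maximum-principle argument applies, and verifying the boundary/initial conditions $v=0$ on $\pt\Omega$ is \emph{not} needed in this form (we only need $v\ge 0$ there, which holds since $g\ge1>0$ and $\EE\ge0$, while $e=0$ on $\pt\Omega$ suffices). The one genuinely new ingredient beyond \cite{Kopteva22} is the splitting $(D_t^\alpha+\LL)(g\,\EE) = g\,D_t^\alpha\EE + \EE\,\LL g$ and the sign argument $g\,D_t^\alpha\EE + \EE\,\LL g \ge D_t^\alpha\EE + \lambda\EE$, which uses $1\le g$, $\LL g\ge\lambda$, $\EE\ge0$, and crucially $D_t^\alpha\EE\ge0$ — the latter only when $\omega>0$, consistent with the hypothesis $\omega D_t^\alpha\EE(t)\ge0$. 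I would state this splitting as a short displayed computation and then close by citing the comparison argument of Theorem~\ref{the_Linfty}.
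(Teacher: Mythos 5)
Your proposal is correct and is essentially the paper's own argument: both use the space-time barrier $g(x)\,\EE(t)$ (yours merely normalized by $(1+\omega)^{-1}$), the splitting $(D_t^\alpha+\LL)(g\,\EE)=g\,D_t^\alpha\EE+\EE\,\LL g\ge(D_t^\alpha+\lambda)\EE$ via $g\ge1$, $\LL g\ge\lambda$, $\EE\ge0$ and $\omega D_t^\alpha\EE\ge0$, followed by the maximum principle of \cite{Kopteva_ML_max_pr} and the bound $g\le1+\omega$ to conclude $\|(u_h-u)(\cdot,t)\|_{L_\infty}\le\EE(t)$.
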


\begin{proof}
Set $\hat\EE(x,t):= g(x)\, \EE(t)$. Then
{
$D_t^\alpha \hat\EE= g D_t^\alpha\EE\ge D_t^\alpha\EE$
(in view of $(1-g) D_t^\alpha\EE\ge 0$ whether $\omega=0$ or $\omega>0$), so}
$$
(D_t^\alpha+\LL)\hat\EE(x,t)\ge (D_t^\alpha+\lambda) \EE(t)\ge (1+\omega) |R_h(x,t)|=(1+\omega) |(D_t^\alpha+\LL)(u_h-u)(x,t)|.
$$
Now, an application of the maximum principle for the operator $D_t^\alpha+\LL$ (see, e.g., \cite{Kopteva_ML_max_pr}) yields
$$
(1+\omega)|(u_h-u)(x,t)|\le \hat\EE(x,t)=g(x)\, \EE(t)\le (1+\omega)\, \EE(t).
$$
This immediately implies \eqref{R_h_bound_new}.
%
\end{proof}

\begin{remark}[$\lambda$ and $\omega$ in Lemma~\ref{lem_Linfty_new}]\label{rem_lem_Linfty_new1}
(i) Setting $\omega:=0$ and $g:=1$ in Lemma~\ref{lem_Linfty_new} immediately yields Corollary~\ref{cor1_L2} for $p=\infty$
with $\lambda:=\inf_{\Omega\times(0,T)}c$.\\
(ii) If $c\ge 0$, for any $\lambda>0$, one may choose $g$ such that $\LL g=\max\{\lambda,\,c\}$ in $\Omega$, subject to $g=1$ on $\pt\Omega$, and $1+\omega:=\sup_{\Omega}g$ (then
$L(g-1)\ge 0$, so,
by the maximum principle, $g\ge 1$).
For example, if $\Omega=(0,1)$ and $\LL:=-\pt^2_x$, then $g=1+\frac12 \lambda\,x(1-x)$ yields $\omega=\frac18\lambda$ for any $\lambda\ge 0$.
{For the same $L$ on a more general $\Omega=(0,\bar x)$ one similarly gets $g=1+\frac12 \lambda\,x(\bar x-x)$, so $\omega=\frac18\lambda\bar x^2$ for any $\lambda\ge 0$.}
\\
(iii) Even if $c<0$, in some cases one may still use Lemma~\ref{lem_Linfty_new} with positive $\lambda$ and $\omega$.
For example, if $\Omega=(0,1)$ and $\LL:=-\pt^2_x-c_0$ for some constant $0<c_0<8$, then $g=1+4\omega\,x(1-x)\le 1+\omega$ yields
$\LL g\ge 8\omega-c_0(1+\omega)= :\lambda$. So for any $\lambda>0$, we can choose $\omega=\omega(\lambda)$ to be used in \eqref{R_h_bound_new}.
\end{remark}

\begin{remark}[flexibility of \eqref{R_h_bound_new} vs. \eqref{R_h_bound}]\label{rem_lem_Linfty_new2}
It may appear that
the new residual barrier \eqref{R_h_bound_new} is more restrictive compared to \eqref{R_h_bound}.
In fact,  \eqref{R_h_bound_new} is {not only} more general (as it reduces to \eqref{R_h_bound} in a particular case of $\omega=0$).
Importantly, by allowing larger values of $\lambda$, \eqref{R_h_bound_new} weakens the restriction on the residual (albeit with an additional factor $(1+\omega)^{-1}$).
This additional flexibility allows for more efficient time stepping algorithms.
\end{remark}

\subsection{Residual profiles for $\lambda\ge 0$}\label{ss_EE}
Corollary~\ref{cor1_L2} seems to imply 
that there is abundant flexibility
in the choice of a desirable
 pointwise-in-time error profile $\EE(t)$.
However, one needs to ensure that the non-local inequality $(D_t^\alpha+\lambda)\EE(t)>0$ holds true  $\forall\,t>0$.
Furthermore, one should avoid a positive $(D_t^\alpha+\lambda)\EE(t)$ becoming too small at any time $t=t^*>0$, as the latter,
combined with a suitable
adaptive time stepping algorithm attempting to attain \eqref{R_h_bound},
 may lead to the local time step near $t^*$ becoming unpractically small, or, even worse, the adaptive
  algorithm failing
to satisfy the required bound \eqref{R_h_bound}
(as $R_h$ is also non-local).

The following lemma describes
two possible error profiles, which are motivated by the pointwise-in-time a-priori error analyses
\cite{Kopteva_Meng,Kopteva21}; see also a discussion in Remark~\ref{rem_apriori}.


\begin{lemma}[{\cite[Corollary~2.4]{Kopteva22}}]\label{lem_L2_bounds}
Suppose that $p\in\{2,\infty\}$ and $\lambda\ge0$.
Then,
under the conditions of Theorem~\ref{the_L2} if $p=2$, and under the conditions of Theorem~\ref{the_Linfty} if $p=\infty$,
for the error $e=u_h-u$
 one has
\begin{subequations}\label{barrier_bounds}
\begin{align}
&\|e(\cdot,t)\|_{L_p(\Omega)}\le \sup_{0<s\le t}\!\left\{ \frac{ \|R_h(\cdot,s)\|_{L_p(\Omega)}}{\RR_0(s) }\right\},
&&
\RR_0(t):=\{\Gamma(1-\alpha)\}^{-1}\,t^{-\alpha}+\lambda,
\label{barrier_bounds_a}
\\[0.3cm]
&\|e(\cdot,t)\|_{L_p(\Omega)}\le t^{\alpha-1}\!
\sup_{0<s\le t}\!\left\{ \frac{ \|R_h(\cdot,s)\|_{L_p(\Omega)}}{\RR_1(s) }\right\},
&&\RR_1(t):=\{\Gamma(1-\alpha)\}^{-1}\,t^{-1}\rho(\tau/t)+\lambda\,\EE_1(t),
\label{barrier_bounds_b}
\end{align}
\beq
\EE_1(t):=\max\{\tau, t\}^{\alpha-1},\qquad
\rho(s):=s^{-\beta}[1-((1-s)^+)^\beta]
,
\qquad\beta:=1-\alpha,
\eeq
where $\tau>0$ is an arbitrary parameter (and $t^{\alpha-1}$ in \eqref{barrier_bounds_b} can be replaced by $\EE_1(t)$).
\end{subequations}
\end{lemma}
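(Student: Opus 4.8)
The plan is to derive both estimates in \eqref{barrier_bounds} from the pointwise bound of Theorem~\ref{the_L2} (of Theorem~\ref{the_Linfty} when $p=\infty$), written out via the integral representation \eqref{Dlammbda_inv}: with $e:=u_h-u$,
\[
\|e(\cdot,t)\|_{L_p(\Omega)}\le \int_0^t (t-s)^{\alpha-1}E_{\alpha,\alpha}(-\lambda[t-s]^\alpha)\,\|R_h(\cdot,s)\|_{L_p(\Omega)}\,\ds .
\]
The structural fact that makes this work is that, for $\lambda\ge 0$, the Mittag-Leffler kernel $(t-s)^{\alpha-1}E_{\alpha,\alpha}(-\lambda[t-s]^\alpha)$ is non-negative (complete monotonicity of $s\mapsto E_{\alpha,\alpha}(-s)$ for $0<\alpha\le1$), so that $(D_t^\alpha+\lambda)^{-1}$ is monotone; moreover, since that kernel is supported in $s\in[0,t]$, the value of $e$ at time $t$ only feels the residual on $(0,t]$, which is precisely what lets us replace a global supremum by $\sup_{0<s\le t}$.

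For \eqref{barrier_bounds_a}, put $C_0(t):=\sup_{0<s\le t}\{\|R_h(\cdot,s)\|_{L_p(\Omega)}/\RR_0(s)\}$, so that $\|R_h(\cdot,s)\|_{L_p(\Omega)}\le C_0(t)\,\RR_0(s)$ for $s\in(0,t]$; inserting this in the integral and using monotonicity gives $\|e(\cdot,t)\|_{L_p(\Omega)}\le C_0(t)\,(D_t^\alpha+\lambda)^{-1}\RR_0(t)$. It then remains to check the identity $(D_t^\alpha+\lambda)^{-1}\RR_0\equiv1$. This follows from the standard convolution formula $\int_0^t(t-s)^{\alpha-1}E_{\alpha,\alpha}(-\lambda[t-s]^\alpha)s^{\gamma-1}\,\ds=\Gamma(\gamma)\,t^{\alpha+\gamma-1}E_{\alpha,\alpha+\gamma}(-\lambda t^\alpha)$ applied with $\gamma=1-\alpha$ (for the term $\{\Gamma(1-\alpha)\}^{-1}s^{-\alpha}$ of $\RR_0$) and with $\gamma=1$ (for the term $\lambda$), combined with $E_{\alpha,1}(z)=1+zE_{\alpha,\alpha+1}(z)$: the two contributions are $E_\alpha(-\lambda t^\alpha)$ and $1-E_\alpha(-\lambda t^\alpha)$, which sum to $1$. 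This yields \eqref{barrier_bounds_a}; equivalently, it records that the constant barrier $\EE\equiv1$, read with the discontinuity convention $\EE(0)=0$, $\EE(0^+)=1$ discussed after Corollary~\ref{cor1_L2} (so that its Caputo derivative picks up the jump term $\{\Gamma(1-\alpha)\}^{-1}t^{-\alpha}$), satisfies $(D_t^\alpha+\lambda)\EE=\RR_0$, so \eqref{barrier_bounds_a} is also a direct instance of Corollary~\ref{cor1_L2}.

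For \eqref{barrier_bounds_b} I would argue identically with $C_1(t):=\sup_{0<s\le t}\{\|R_h(\cdot,s)\|_{L_p(\Omega)}/\RR_1(s)\}$, so that it suffices to prove $(D_t^\alpha+\lambda)^{-1}\RR_1(t)=\EE_1(t)$; then $\EE_1(t)=\max\{\tau,t\}^{\alpha-1}\le t^{\alpha-1}$ (as $\alpha-1<0$) gives the stated estimate, the $t^{\alpha-1}$ and $\EE_1$ versions differing only by this trivial majorization. The identity is equivalent to $(D_t^\alpha+\lambda)\EE_1=\RR_1$, i.e.\ to $D_t^\alpha\EE_1(t)=\{\Gamma(1-\alpha)\}^{-1}t^{-1}\rho(\tau/t)$, which I would verify straight from $D_t^\alpha\EE_1=J_t^{1-\alpha}(\pt_t\EE_1)$. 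Here $\EE_1$ equals the constant $\tau^{\alpha-1}$ on $(0,\tau]$ and equals $t^{\alpha-1}$ on $[\tau,\infty)$, and with $\EE_1(0)=0$ its distributional derivative is $\tau^{\alpha-1}\delta(t)+(\alpha-1)t^{\alpha-2}\mathbf 1_{(\tau,\infty)}(t)$: the Dirac mass contributes $\{\Gamma(1-\alpha)\}^{-1}\tau^{\alpha-1}t^{-\alpha}$, which alone already proves the claim on $(0,\tau]$ where $\rho(\tau/t)=(t/\tau)^{1-\alpha}$, while for $t>\tau$ there is the extra term $\frac{\alpha-1}{\Gamma(1-\alpha)}\int_\tau^t(t-s)^{-\alpha}s^{\alpha-2}\,\ds$. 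The substitution $s=t/(1+w)$ turns this incomplete-beta-type integral into $-\{\Gamma(1-\alpha)\}^{-1}\tau^{\alpha-1}(t-\tau)^{1-\alpha}t^{-1}$, and collecting terms reproduces $\{\Gamma(1-\alpha)\}^{-1}t^{-1}(t/\tau)^{1-\alpha}\bigl[1-((t-\tau)/t)^{1-\alpha}\bigr]=\{\Gamma(1-\alpha)\}^{-1}t^{-1}\rho(\tau/t)$; adding $\lambda\EE_1(t)$ recovers $\RR_1(t)$. The main obstacle, and essentially the only step that is not bookkeeping, is exactly this computation for $t>\tau$: one must handle the jump of $\EE_1$ at $t=0$ correctly and then recognize that the remaining fractional integral of the piecewise power $\max\{\tau,t\}^{\alpha-1}$ collapses to the elementary expression packaged in $\rho$ — everything else reduces to the monotonicity of $(D_t^\alpha+\lambda)^{-1}$ and routine Mittag-Leffler manipulations.
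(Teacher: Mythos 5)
Your proposal is correct and follows essentially the same route as the paper (and its source \cite{Kopteva22}): it is the residual-barrier comparison of Corollary~\ref{cor1_L2} (equivalently, monotonicity of $(D_t^\alpha+\lambda)^{-1}$ for $\lambda\ge0$) applied with the barriers $\EE_0\equiv1$ and $\EE_1=\max\{\tau,t\}^{\alpha-1}$ (both set to $0$ at $t=0$), scaled by the respective suprema, together with the verification that $(D_t^\alpha+\lambda)\EE_l=\RR_l$, exactly as indicated in Remark~\ref{rem_apriori}. Your explicit computations — the Mittag--Leffler identity giving $(D_t^\alpha+\lambda)^{-1}\RR_0=1$ and the evaluation of the fractional integral of $\EE_1$ for $t>\tau$ — check out.
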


The above lemma may be reformulated for the purpose of a possible
adaptive time stepping algorithm with some desirably small positive  $TOL$,
as follows:
\begin{subequations}\label{barrier_bounds_alg}
\begin{align}
&
\|R_h(\cdot,t)\|_{L_p(\Omega)}\le TOL\cdot\RR_0(t)\;\;\;\forall\,t>0
&\Rightarrow\quad&
\|e(\cdot,t)\|_{L_p(\Omega)}\le TOL,
\label{barrier_bounds_alg_a}
\\[0.3cm]
&
\|R_h(\cdot,t)\|_{L_p(\Omega)}\le TOL\cdot\RR_1(t)\;\;\;\forall\,t>0
&\Rightarrow\quad&
\|e(\cdot,t)\|_{L_p(\Omega)}\le TOL\cdot  t^{\alpha-1}.
\label{barrier_bounds_alg_b}
\end{align}
\end{subequations}
Hence, $\|R_h(\cdot,t)\|_{L_p(\Omega)}\le TOL\cdot\RR_l(t)$, with $l\in\{0,1\}$ and $p\in\{2,\infty\}$, can be employed as a criterion for the adaptive time stepping
(see Section~\ref{sec_algorithm} for further details on such algorithms).

Furthermore,
an inspection of the proof of Lemma~\ref{lem_L2_bounds} (given in \cite{Kopteva22})
shows that under the conditions of Lemma~\ref{lem_Linfty_new} one
immediately gets
 more general versions of
\eqref{barrier_bounds_alg_a} and \eqref{barrier_bounds_alg_b} for $p=\infty$; see below.
 These new versions 
 are of interest
 since they are valid for possibly larger values of $\lambda$ in the definitions of $\RR_0(t)$ and $\RR_1(t)$
 (see Remarks~\ref{rem_lem_Linfty_new1} and~\ref{rem_lem_Linfty_new2}).

 \begin{corollary}\label{cor_new}
 Under the conditions of Lemma~\ref{lem_Linfty_new}, for the error $e=u_h-u$, one has
\begin{subequations}\label{barrier_bounds_alg_new}
\begin{align}
&
\|R_h(\cdot,t)\|_{L_\infty(\Omega)}\le \frac{TOL\cdot\RR_0(t)}{1+\omega}\;\;\;\forall\,t>0
&\Rightarrow\quad&
\|e(\cdot,t)\|_{L_\infty(\Omega)}\le TOL,
\label{barrier_bounds_alg_new_a}
\\[0.3cm]
&
\|R_h(\cdot,t)\|_{L_\infty(\Omega)}\le \frac{TOL\cdot\RR_1(t)}{1+\omega}\;\;\;\forall\,t>0
&\Rightarrow\quad&
\|e(\cdot,t)\|_{L_\infty(\Omega)}\le TOL\cdot  t^{\alpha-1}.
\label{barrier_bounds_alg_new_b}
\end{align}
\end{subequations}
\end{corollary}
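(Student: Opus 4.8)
The plan is to obtain Corollary~\ref{cor_new} exactly as Lemma~\ref{lem_L2_bounds} is obtained in \cite{Kopteva22}, but invoking the new Lemma~\ref{lem_Linfty_new} in place of Corollary~\ref{cor1_L2} at the point where the bound on the error is deduced from the bound on the residual. So I would reuse the very barrier functions appearing in the proof of Lemma~\ref{lem_L2_bounds}: for \eqref{barrier_bounds_alg_new_a} the constant barrier $\EE(t):=TOL$ (with the convention $\EE(0):=0$), and for \eqref{barrier_bounds_alg_new_b} the barrier $\EE(t):=TOL\cdot\EE_1(t)=TOL\max\{\tau,t\}^{\alpha-1}$. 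By linearity of $(D_t^\alpha+\lambda)$ and the computations already carried out in \cite{Kopteva22}, these satisfy $(D_t^\alpha+\lambda)\EE(t)=TOL\cdot\RR_0(t)$ and $(D_t^\alpha+\lambda)\EE(t)=TOL\cdot\RR_1(t)$ respectively, with $D_t^\alpha\EE(t)=TOL\,\{\Gamma(1-\alpha)\}^{-1}t^{-\alpha}$ in the first case and $D_t^\alpha\EE(t)=TOL\,\{\Gamma(1-\alpha)\}^{-1}t^{-1}\rho(\tau/t)$ in the second.

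With these choices the hypothesis \eqref{R_h_bound_new} of Lemma~\ref{lem_Linfty_new} is precisely the left-hand side of \eqref{barrier_bounds_alg_new_a}, respectively \eqref{barrier_bounds_alg_new_b}; moreover $\EE$ is nonnegative and lies in $W^1_\infty(\epsilon,t)$ for any $0<\epsilon<t\le T$ with a finite nonnegative limit as $t\to0^+$ (equal to $TOL$, resp. $TOL\,\tau^{\alpha-1}$). Hence the only genuinely new point to check is the extra sign condition $\omega D_t^\alpha\EE(t)\ge0$ for $t>0$. Since $\omega\ge0$, it suffices to show $D_t^\alpha\EE(t)\ge0$. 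In the first case this is immediate from the displayed formula. In the second it reduces to $\rho(\tau/t)\ge0$, and in fact $\rho(s)=s^{-\beta}\bigl[1-((1-s)^+)^\beta\bigr]>0$ for every $s>0$: if $s\ge1$ then $(1-s)^+=0$ and $\rho(s)=s^{-\beta}>0$, while if $0<s<1$ then $0<(1-s)^\beta<1$ and again $\rho(s)>0$. (This positivity is already used in \cite{Kopteva22} when verifying $\RR_1(t)>0$.)

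Granting these verifications, Lemma~\ref{lem_Linfty_new} yields $\|e(\cdot,t)\|_{L_\infty(\Omega)}\le\EE(t)$ for all $t\ge0$, which is $\|e(\cdot,t)\|_{L_\infty(\Omega)}\le TOL$ in the first case and $\|e(\cdot,t)\|_{L_\infty(\Omega)}\le TOL\,\EE_1(t)=TOL\max\{\tau,t\}^{\alpha-1}\le TOL\cdot t^{\alpha-1}$ in the second, the last step because $\alpha-1<0$ and $\max\{\tau,t\}\ge t$; this is exactly \eqref{barrier_bounds_alg_new}. I do not anticipate a real obstacle here: the argument is a direct transcription of the proof of Lemma~\ref{lem_L2_bounds}, and the single additional ingredient --- nonnegativity of $D_t^\alpha\EE_1$ --- is handled by the elementary estimate on $\rho$ above, which is already implicit in \cite{Kopteva22}. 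The only thing requiring a little care is bookkeeping the constant $TOL$ and the $(1+\omega)^{-1}$ factor consistently through the reduction.
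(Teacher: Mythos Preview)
Your proposal is correct and follows exactly the approach indicated in the paper: the authors state just before the corollary that ``an inspection of the proof of Lemma~\ref{lem_L2_bounds} (given in \cite{Kopteva22}) shows that under the conditions of Lemma~\ref{lem_Linfty_new} one immediately gets'' these bounds, without supplying further detail. Your write-up is in fact more explicit than the paper, since you verify the one genuinely new hypothesis $\omega D_t^\alpha\EE(t)\ge 0$ via the positivity of $\rho$, which the paper leaves implicit.
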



\begin{remark}[error profiles v pointwise a-priori error bounds]\label{rem_apriori}
{Suppose that $u$ exhibits an initial singularity of type $t^\alpha$, typical for this problem, with the derivative bounds $\|\pt_t^lu(\cdot, t)\|_{L_p(\Omega)}\le C t^{\alpha-l}$ $\forall\,t>0$, $1\le l\le q$, with some integer $q\ge2$, constant $C>0$, and $p\in\{2,\infty\}$.}
Then the error bounds of type \cite[(3.2)]{Kopteva_Meng} and
 \cite[(4.2)]{Kopteva21} imply that
given a method of order $q$
on a graded mesh $\{T(j/M)^r\}_{j=0}^M$
(with $q=2$ for the L1 method), depending on the degree of grading, the error is either proportional to
$t^{\alpha-1}$ or (if the grading parameter $r$ exceeds $q-\alpha$) to  $t^{\alpha-(q-\alpha)/r}$.
Hence, the two error profiles of interest that we consider are proportional to $\EE_0(t)=1$ for $t>0$, or $\EE_1(t)=t^{\alpha-1}$ for $t>\tau>0$; see \eqref{barrier_bounds_alg_a} and \eqref{barrier_bounds_alg_b}, respectively.
(To be more precise, $\EE_1(t):=\max\{\tau, t\}^{\alpha-1}$, while $\EE_0(0)=\EE_1(0)=0$.
{In fact, $\RR_0$ and $\RR_1$ in \eqref{barrier_bounds} and, hence, \eqref{barrier_bounds_alg}
are obtained simply by an application of $D_t^\alpha+\lambda$
to respectively $\EE_0$ and $\EE_1$ \cite{Kopteva22}.})

With these two choices,
the a-priori error bounds from \cite{Kopteva_Meng,Kopteva21} suggest that
 the error is expected to be respectively $\lesssim M^{q-\alpha}$ or $\lesssim M^{q-\alpha}t^{\alpha-1}$ $\forall\,t\in[0,T]$, which
agrees, and surprisingly well, with numerical results in Section~\ref{ssec_numerics}.
 Note also that these convergence rates are consistent with those
 in  \cite{Kopteva_Meng,Kopteva21}
 on a-priori chosen graded meshes
 with $r=(q-\alpha)/\alpha$
 and $r=2-\alpha$, respectively (in the latter case, up to the logarithmic term $\ln M$).
\end{remark}

\begin{remark}[$\lambda<0$]
Strictly speaking, Lemma~\ref{lem_L2_bounds} also applies to the case $\lambda<0$.
However, in this case both $\RR_0$ and $\RR_1$ become negative at some $t>0$, so the residual bound
$ \|R_h(\cdot,s)\|_{L_p(\Omega)}\le TOL\cdot\RR_l(t)$ (with $l=0,1$)
cannot be attained.
 One possible remedy is to replace $\RR_l$  by $\RR^*_l:=\max\{\RR_l,\varepsilon\}$ with some small parameter $\varepsilon$,
 in which case the error will be bounded by $TOL\cdot\EE^*(t)$, where
 $\EE^*:=(D_t^\alpha+\lambda)^{-1}\RR^*_l$. Clearly, one will enjoy  $\EE^*=\EE_l$ for $t\le t^*$ as long as $\RR^*_l= \RR_l$ $\forall\,t\in(0,t^*]$. Afterwards $\EE^*$ may be computed with sufficiently high accuracy by solving the fractional ODE $(D_t^\alpha+\lambda)\EE^*=\RR^*$ numerically on a very fine mesh.
\end{remark}

\subsection{Generalization for the semilinear case}\label{ss_semi}
One can easily extend the above results to the following semilinear version of \eqref{problem}:
\beq\label{semi_problem}
D_t^{\alpha}u+\LL u+g(x,t,u)=f(x,t)\qquad\mbox{for}\;\;(x,t)\in\Omega\times(0,T],
\eeq
assuming that $g$ is sufficiently smooth and, with some $\mu\in\R$, satisfies
$$
\partial_v g(x,t,v)\ge\mu\qquad\forall (x,t,v)\in\Omega\times(0,T]\times\R.
$$
Then, in view of  the standard linearization
$$
g(x,t,u_h)-g(x,t, u)=\hat c(x,t)\,(u_h-u),\qquad
\hat c:=\int_0^1 \pt_v g(x,t,u+s(u_h-u))\,ds\ge\mu,
$$
the error satisfies $(D^\alpha_t+\LL+\hat c)(u_h-u)=R_h$, with the updated definition of the residual
$$
R_h:=D_t^{\alpha}u_h+\LL u_h+g(x,t,u_h)-f(x,t).
$$
\begin{corollary}[semilinear case]\label{cor_semi}
Assume that $\langle \LL v,v\rangle\ge \lambda^*\|v\|^2$ in Theorem~\ref{the_L2} for some $\lambda^*\in\R$
(instead of $\langle \LL v,v\rangle\ge \lambda\|v\|^2$), or, similarly, $c\ge\lambda^*$ in Theorem~\ref{the_Linfty}
(instead of $c\ge\lambda$). Then one gets
the error bound~\eqref{L2_error} with  $\lambda:=\lambda^*+\mu$. 
In the latter $\|\cdot\|$ is
understood as $\|\cdot\|_{L_p(\Omega)}$ with $p=2$ or $p=\infty$, respectively.
A version of Corollary~\ref{cor1_L2}, as well as a version of Lemma~\ref{lem_L2_bounds}, is also valid for the semilinear equation~\eqref{semi_problem}.
\end{corollary}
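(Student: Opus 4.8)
The plan is to reduce the semilinear statement to the already-established linear results applied to the linearized error equation displayed just before the corollary, namely $(D_t^\alpha+\LL+\hat c)\,e=R_h$, where $e:=u_h-u$ and $R_h:=D_t^{\alpha}u_h+\LL u_h+g(\cdot,\cdot,u_h)-f$ is the updated residual. First I would set $\tilde\LL:=\LL+\hat c$, i.e.\ the same second-order operator but with its zeroth-order coefficient augmented by $\hat c$ (equivalently, in the $L_2$ setting, the same bilinear form shifted by the reaction term $\hat c$). Since $\hat c\ge\mu$ pointwise, in the $L_2$ case $\langle\tilde\LL v,v\rangle=\langle\LL v,v\rangle+\langle\hat c\,v,v\rangle\ge\lambda^*\|v\|^2+\mu\|v\|^2=(\lambda^*+\mu)\|v\|^2$ for all $v\in H^1_0(\Omega)$, while in the $L_\infty$ case the reaction coefficient of $\tilde\LL$ satisfies $c+\hat c\ge\lambda^*+\mu$. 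Thus $\tilde\LL$ meets the coercivity/sign hypothesis of Theorem~\ref{the_L2} (resp.\ Theorem~\ref{the_Linfty}) with the single constant $\lambda:=\lambda^*+\mu$.

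Next I would observe that the proofs of Theorems~\ref{the_L2} and~\ref{the_Linfty} use the exact equation~\eqref{problem} only through the resulting error identity $(D_t^\alpha+\LL)e=R_h$: the argument recalled after Theorem~\ref{the_L2} shows that this identity, together with $\langle D_t^\alpha v,v\rangle\ge(D_t^\alpha\|v\|)\|v\|$ (resp.\ its $L_\infty$ analogue obtained via the maximum principle of \cite{Kopteva_ML_max_pr}), yields $(D_t^\alpha+\lambda)\|e(\cdot,t)\|\le\|R_h(\cdot,t)\|$ and hence~\eqref{L2_error}. Applying that same argument to the identity $(D_t^\alpha+\tilde\LL)e=R_h$ with $\tilde\LL$ as above --- whose admissibility, and the regularity of $e$, are inherited from the assumed regularity of $u$, $u_h$ and the smoothness of $g$ --- gives~\eqref{L2_error} with $\|\cdot\|=\|\cdot\|_{L_p(\Omega)}$ and $\lambda=\lambda^*+\mu$, which is the claim. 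The semilinear versions of Corollary~\ref{cor1_L2} and Lemma~\ref{lem_L2_bounds} then follow immediately, since those statements are derived from Theorems~\ref{the_L2}--\ref{the_Linfty} purely by prescribing the barrier functions $\EE$, $\EE_0$, $\EE_1$ and applying $D_t^\alpha+\lambda$ to them; nothing in those derivations changes except that each occurrence of $\lambda$ is replaced by $\lambda^*+\mu$.

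The only genuinely delicate point I expect is checking that the linearized coefficient $\hat c(x,t)=\int_0^1\partial_v g(x,t,u+s(u_h-u))\,ds$ is admissible for the arguments being invoked: for the $L_\infty$ result it must be smooth enough in $x$ for the maximum principle for $D_t^\alpha+\tilde\LL$ to apply, which is ensured by the standing assumptions that $g$ is sufficiently smooth and that $u,u_h\in C^2(\Omega)$ for each $t>0$; for the $L_2$ result one only needs $\hat c\in L_\infty$, which is automatic from $\partial_v g\ge\mu$ and smoothness of $g$. A minor secondary subtlety is that Theorem~\ref{the_L2} is literally stated for $u$ solving the \emph{linear} problem~\eqref{problem}, so one must make explicit --- as indicated above --- that its proof is insensitive to this and depends only on the error identity; once that is acknowledged the remaining verifications are routine.
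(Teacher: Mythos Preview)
Your proposal is correct and follows exactly the route the paper intends: the paper does not give a separate proof of Corollary~\ref{cor_semi} but simply displays the linearization $(D_t^\alpha+\LL+\hat c)(u_h-u)=R_h$ with $\hat c\ge\mu$ immediately before the statement, leaving it implicit that one now applies Theorems~\ref{the_L2}--\ref{the_Linfty} to the operator $\LL+\hat c$ with the shifted constant $\lambda=\lambda^*+\mu$. You have spelled out precisely this step, together with the (routine) observation that Corollary~\ref{cor1_L2} and Lemma~\ref{lem_L2_bounds} depend only on the resulting bound~\eqref{L2_error} and therefore carry over verbatim.
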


  \section{Numerical approximations and their residuals}\label{sec_methods}
  In this Section
  we describe several numerical approximations for our time-fractional problem \eqref{problem} and also discuss the evaluation of their residuals
  (the latter are to be used by the adaptive algorithm considered in Section~\ref{sec_algorithm}).
  All numerical methods are presented relative to an arbitrary temporal mesh $\{t_k\}_{k=0}^M$ covering $[0,T]$ with intervals $(t_{k-1},t_k]$ of width $\tau_k$,
  and are conveniently described using certain continuous piecewise-polynomial functions in time.

  \subsection{L1 method}

  We start with the popular L1 method; see, e.g., \cite{JLZ19,Stynes21} and references therein.
  Defining the numerical approximation $u_h$  in $\bar\Omega\times[0,T]$ as continuous piecewise-linear in time,
  one can describe the L1 method by
  \beq\label{L1method}
(D_t^\alpha+\LL)\,u_h(x, t_k)=f(x, t_k)\qquad \mbox{for}\;\; x\in\Omega,\;\;  k=1,\ldots, M,
\eeq
subject to $u_h^0:=u_0$ and $u_h=0$ on $\pt \Omega$.

To be more precise, with the notation $U_k:=u_h(\cdot,t_k)$,
  \[
    u_h(t)\big|_{[t_{k-1},t_k]}:=U_{k-1}\,\phi_k^0(t)+
    U_k\,\phi_k^1(t),
  \]
  where
 \beq\label{phi12}
    \phi_k^0(t):=\frac{t_k-t}{\tau_k},\qquad
    \phi_k^1(t):=\frac{t-t_{k-1}}{\tau_k}=1-\phi_k^0(t).
  \eeq

 To implement the L1 method, one needs to evaluate the non-local $D^\alpha_t u_h(\cdot,t_k)$ in terms of $\{U_k\}$.
 More generally, to compute the residual $R_h$ (to be used by the adaptive algorithm), one needs to compute $D^\alpha_t u_h(\cdot, t)$ for any $t>0$.
 For $t_{k-1}< t\leq t_k$ $\forall$\,$k\ge1$, a straightforward calculation using \eqref{CaputoEquiv} yields
  \begin{align}\label{D_alph_L1}
    D^\alpha_t u_h(t)
      &= \frac{1}{\Gamma(2-\alpha)}\left(-\sum_{j=1}^{k-1}\frac{U_j-U_{j-1}}{\tau_j}(t-s)^{1-\alpha}\big|_{s=t_{j-1}}^{s=t_j}
                                                         +\frac{U_k-U_{k-1}}{\tau_k}(t-t_{k-1})^{1-\alpha}\right).
  \end{align}
  Stable implementations of this method, as well as the other considered methods, will be discussed in Section~\ref{ssec_implem}, while
  the efficient computation of the residuals for all considered methods will be addressed in Section~\ref{ssec_residuals}.

  For the latter, note that \eqref{L1method} immediately implies
  for the residual that $R_h(\cdot, t_k)=0$ for $k\ge 1$;
  hence on each $(t_{k-1},t_k)$ for $k>1$, the residual is a non-symmetric bubble.
  This is illustrated by Figure~\ref{fig:bubblesL1},
  which shows a typical behaviour for the residual of the L1-method on an equidistant temporal mesh of four 
  cells.

  \begin{figure}[t]
    \begin{center}
%
%
\definecolor{mycolor1}{rgb}{0.00000,0.44700,0.74100}%
\begin{tikzpicture}

\begin{axis}[%
width=0.6\textwidth,
height=0.15\textwidth,
scale only axis,
xmin=0,xmax=1,
xlabel style={font=\color{white!15!black}},
xlabel={$t$},
xtick={0,0.25,0.5,0.75,1},
xticklabels={0,$\frac{1}{4}$,$\frac{1}{2}$,$\frac{3}{4}$,1},
ymode=log,
ymin=1e-16,ymax=1,
yminorticks=true,
axis background/.style={fill=white}
]
\addplot [color=mycolor1, forget plot,thick]
  table[row sep=crcr]{%
0	1\\
2.5e-11	1\\
8e-10	1\\
6.075e-09	1\\
2.56e-08	1\\
7.8125e-08	1\\
1.944e-07	1\\
4.2017e-07	1\\
8.192e-07	1\\
1.4762e-06	0.99999\\
2.5e-06	0.99999\\
4.0263e-06	0.99998\\
6.2208e-06	0.99998\\
9.2823e-06	0.99996\\
1.3446e-05	0.99995\\
1.8984e-05	0.99992\\
2.6214e-05	0.9999\\
3.5496e-05	0.99986\\
4.7239e-05	0.99981\\
6.1902e-05	0.99975\\
8e-05	0.99968\\
0.0001021	0.99959\\
0.00012884	0.99948\\
0.00016091	0.99936\\
0.00019907	0.9992\\
0.00024414	0.99902\\
0.00029703	0.99881\\
0.00035872	0.99857\\
0.00043026	0.99828\\
0.00051278	0.99795\\
0.0006075	0.99757\\
0.00071573	0.99714\\
0.00083886	0.99664\\
0.00097838	0.99609\\
0.0011359	0.99546\\
0.001313	0.99475\\
0.0015117	0.99395\\
0.0017336	0.99307\\
0.0019809	0.99208\\
0.0022556	0.99098\\
0.00256	0.98976\\
0.0028964	0.98841\\
0.0032673	0.98693\\
0.0036752	0.9853\\
0.0041229	0.98351\\
0.0046132	0.98155\\
0.0051491	0.9794\\
0.0057336	0.97707\\
0.0063701	0.97452\\
0.0070619	0.97175\\
0.0078125	0.96875\\
0.0086256	0.9655\\
0.0095051	0.96198\\
0.010455	0.95818\\
0.011479	0.95408\\
0.012582	0.94967\\
0.013768	0.94493\\
0.015042	0.93983\\
0.016409	0.93436\\
0.017873	0.92851\\
0.01944	0.92224\\
0.021115	0.91554\\
0.022903	0.90839\\
0.024811	0.90076\\
0.026844	0.89263\\
0.029007	0.88397\\
0.031308	0.87477\\
0.033753	0.86499\\
0.036348	0.85461\\
0.039101	0.8436\\
0.042017	0.83193\\
0.045106	0.81958\\
0.048373	0.80651\\
0.051827	0.79269\\
0.055475	0.7781\\
0.059326	0.7627\\
0.063388	0.74645\\
0.06767	0.72932\\
0.072179	0.71128\\
0.076926	0.69229\\
0.08192	0.67232\\
0.08717	0.65132\\
0.092685	0.62926\\
0.098476	0.6061\\
0.10455	0.58179\\
0.11093	0.55629\\
0.11761	0.52957\\
0.12461	0.50158\\
0.13193	0.47227\\
0.1396	0.44159\\
0.14762	0.40951\\
0.15601	0.37597\\
0.16477	0.34092\\
0.17392	0.30431\\
0.18348	0.2661\\
0.19345	0.22622\\
0.20384	0.18463\\
0.21468	0.14127\\
0.22598	0.096079\\
0.23775	0.04901\\
0.25	2.2204e-16\\
0.25	0.0022187\\
0.25	0.0044373\\
0.25	0.006656\\
0.25	0.0088746\\
0.25	0.011093\\
0.25	0.013312\\
0.25	0.01553\\
0.25	0.017749\\
0.25	0.019967\\
0.25	0.022184\\
0.25	0.024401\\
0.25001	0.026618\\
0.25001	0.028834\\
0.25001	0.031048\\
0.25002	0.033262\\
0.25003	0.035473\\
0.25004	0.037683\\
0.25005	0.03989\\
0.25006	0.042095\\
0.25008	0.044296\\
0.2501	0.046493\\
0.25013	0.048686\\
0.25016	0.050874\\
0.2502	0.053056\\
0.25024	0.055231\\
0.2503	0.057398\\
0.25036	0.059557\\
0.25043	0.061707\\
0.25051	0.063846\\
0.25061	0.065973\\
0.25072	0.068087\\
0.25084	0.070187\\
0.25098	0.072271\\
0.25114	0.074338\\
0.25131	0.076386\\
0.25151	0.078413\\
0.25173	0.080417\\
0.25198	0.082397\\
0.25226	0.084351\\
0.25256	0.086277\\
0.2529	0.088171\\
0.25327	0.090033\\
0.25368	0.091859\\
0.25412	0.093646\\
0.25461	0.095393\\
0.25515	0.097097\\
0.25573	0.098754\\
0.25637	0.10036\\
0.25706	0.10192\\
0.25781	0.10342\\
0.25863	0.10485\\
0.25951	0.10623\\
0.26045	0.10754\\
0.26148	0.10878\\
0.26258	0.10995\\
0.26377	0.11103\\
0.26504	0.11204\\
0.26641	0.11296\\
0.26787	0.11378\\
0.26944	0.11452\\
0.27111	0.11515\\
0.2729	0.11567\\
0.27481	0.11609\\
0.27684	0.11639\\
0.27901	0.11658\\
0.28131	0.11663\\
0.28375	0.11656\\
0.28635	0.11635\\
0.2891	0.116\\
0.29202	0.1155\\
0.29511	0.11485\\
0.29837	0.11404\\
0.30183	0.11307\\
0.30548	0.11192\\
0.30933	0.1106\\
0.31339	0.1091\\
0.31767	0.1074\\
0.32218	0.10551\\
0.32693	0.10342\\
0.33192	0.10111\\
0.33717	0.098596\\
0.34268	0.095855\\
0.34848	0.092884\\
0.35455	0.089676\\
0.36093	0.086225\\
0.36761	0.082522\\
0.37461	0.078561\\
0.38193	0.074333\\
0.3896	0.069833\\
0.39762	0.065051\\
0.40601	0.05998\\
0.41477	0.054613\\
0.42392	0.048941\\
0.43348	0.042955\\
0.44345	0.036649\\
0.45384	0.030013\\
0.46468	0.023039\\
0.47598	0.015718\\
0.48775	0.0080413\\
0.5	2.2204e-16\\
0.5	0.00044392\\
0.5	0.00088784\\
0.5	0.0013318\\
0.5	0.0017757\\
0.5	0.0022196\\
0.5	0.0026635\\
0.5	0.0031074\\
0.5	0.0035512\\
0.5	0.003995\\
0.5	0.0044388\\
0.5	0.0048825\\
0.50001	0.005326\\
0.50001	0.0057695\\
0.50001	0.0062127\\
0.50002	0.0066558\\
0.50003	0.0070985\\
0.50004	0.007541\\
0.50005	0.007983\\
0.50006	0.0084246\\
0.50008	0.0088656\\
0.5001	0.009306\\
0.50013	0.0097457\\
0.50016	0.010184\\
0.5002	0.010622\\
0.50024	0.011059\\
0.5003	0.011495\\
0.50036	0.011929\\
0.50043	0.012361\\
0.50051	0.012792\\
0.50061	0.013221\\
0.50072	0.013647\\
0.50084	0.014072\\
0.50098	0.014493\\
0.50114	0.014912\\
0.50131	0.015328\\
0.50151	0.01574\\
0.50173	0.016148\\
0.50198	0.016553\\
0.50226	0.016953\\
0.50256	0.017348\\
0.5029	0.017738\\
0.50327	0.018122\\
0.50368	0.018501\\
0.50412	0.018873\\
0.50461	0.019238\\
0.50515	0.019596\\
0.50573	0.019946\\
0.50637	0.020287\\
0.50706	0.02062\\
0.50781	0.020942\\
0.50863	0.021255\\
0.50951	0.021556\\
0.51045	0.021846\\
0.51148	0.022124\\
0.51258	0.022389\\
0.51377	0.022639\\
0.51504	0.022875\\
0.51641	0.023096\\
0.51787	0.0233\\
0.51944	0.023486\\
0.52111	0.023655\\
0.5229	0.023803\\
0.52481	0.023932\\
0.52684	0.024039\\
0.52901	0.024123\\
0.53131	0.024183\\
0.53375	0.024218\\
0.53635	0.024226\\
0.5391	0.024207\\
0.54202	0.024159\\
0.54511	0.024079\\
0.54837	0.023968\\
0.55183	0.023823\\
0.55548	0.023642\\
0.55933	0.023425\\
0.56339	0.023168\\
0.56767	0.022872\\
0.57218	0.022532\\
0.57693	0.022149\\
0.58192	0.021719\\
0.58717	0.021241\\
0.59268	0.020713\\
0.59848	0.020133\\
0.60455	0.019498\\
0.61093	0.018806\\
0.61761	0.018056\\
0.62461	0.017244\\
0.63193	0.016369\\
0.6396	0.015427\\
0.64762	0.014418\\
0.65601	0.013337\\
0.66477	0.012183\\
0.67392	0.010954\\
0.68348	0.0096454\\
0.69345	0.0082561\\
0.70384	0.0067831\\
0.71468	0.0052237\\
0.72598	0.0035752\\
0.73775	0.0018349\\
0.75	2.2204e-16\\
0.75	0.00010237\\
0.75	0.00020474\\
0.75	0.00030711\\
0.75	0.00040948\\
0.75	0.00051185\\
0.75	0.00061421\\
0.75	0.00071658\\
0.75	0.00081894\\
0.75	0.00092129\\
0.75	0.0010236\\
0.75	0.0011259\\
0.75001	0.0012283\\
0.75001	0.0013305\\
0.75001	0.0014328\\
0.75002	0.001535\\
0.75003	0.0016371\\
0.75004	0.0017392\\
0.75005	0.0018412\\
0.75006	0.0019432\\
0.75008	0.002045\\
0.7501	0.0021467\\
0.75013	0.0022482\\
0.75016	0.0023496\\
0.7502	0.0024509\\
0.75024	0.0025519\\
0.7503	0.0026526\\
0.75036	0.0027531\\
0.75043	0.0028533\\
0.75051	0.0029532\\
0.75061	0.0030527\\
0.75072	0.0031518\\
0.75084	0.0032504\\
0.75098	0.0033486\\
0.75114	0.0034462\\
0.75131	0.0035432\\
0.75151	0.0036395\\
0.75173	0.0037351\\
0.75198	0.0038299\\
0.75226	0.0039239\\
0.75256	0.004017\\
0.7529	0.0041091\\
0.75327	0.0042001\\
0.75368	0.0042899\\
0.75412	0.0043785\\
0.75461	0.0044658\\
0.75515	0.0045516\\
0.75573	0.0046359\\
0.75637	0.0047185\\
0.75706	0.0047993\\
0.75781	0.0048782\\
0.75863	0.0049551\\
0.75951	0.0050298\\
0.76045	0.0051022\\
0.76148	0.0051721\\
0.76258	0.0052394\\
0.76377	0.0053038\\
0.76504	0.0053652\\
0.76641	0.0054234\\
0.76787	0.0054782\\
0.76944	0.0055294\\
0.77111	0.0055767\\
0.7729	0.00562\\
0.77481	0.0056588\\
0.77684	0.005693\\
0.77901	0.0057223\\
0.78131	0.0057464\\
0.78375	0.005765\\
0.78635	0.0057776\\
0.7891	0.0057841\\
0.79202	0.0057839\\
0.79511	0.0057767\\
0.79837	0.0057621\\
0.80183	0.0057397\\
0.80548	0.0057091\\
0.80933	0.0056696\\
0.81339	0.0056209\\
0.81767	0.0055624\\
0.82218	0.0054937\\
0.82693	0.005414\\
0.83192	0.0053229\\
0.83717	0.0052198\\
0.84268	0.0051039\\
0.84848	0.0049747\\
0.85455	0.0048315\\
0.86093	0.0046735\\
0.86761	0.0045001\\
0.87461	0.0043106\\
0.88193	0.0041041\\
0.8896	0.0038798\\
0.89762	0.003637\\
0.90601	0.0033749\\
0.91477	0.0030925\\
0.92392	0.0027891\\
0.93348	0.0024638\\
0.94345	0.0021156\\
0.95384	0.0017437\\
0.96468	0.0013471\\
0.97598	0.00092491\\
0.98775	0.0004762\\
1	2.2204e-16\\
};
\end{axis}
\end{tikzpicture}%
    \end{center}\vspace{-0.6cm}
    \caption{The residual
     $\|R_h\|_{L_\infty(\Omega)}$ of the L1-method\label{fig:bubblesL1} for problem \eqref{problem} with $\alpha=0.8$, $L=-\triangle$, $f=1+t$, $\Omega=(0,1)$, $u_0=0$.}
  \end{figure}
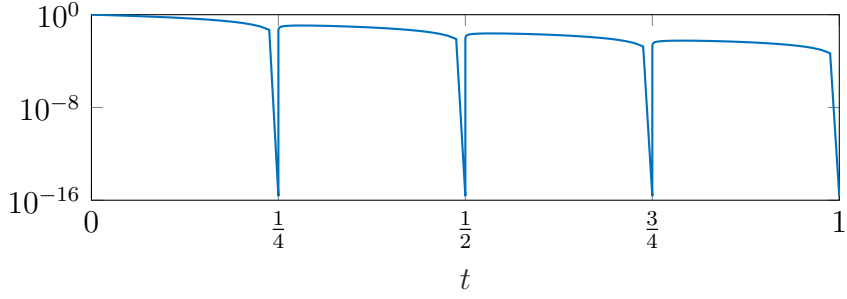

  \subsection{L1-2 method}
  A natural improvement over the L1-method is to use a piecewise quadratic $u_h$.
  Assuming the associated degrees of freedom are $\{U_k=u_h(\cdot,t_k)\}_{k=0}^M$,
  there are several possibilities of defining such a method; see, e.g., \cite{LX16,Kopteva21}.
  We will use an alternative method proposed in \cite{GSZZ14} as it employs a more natural (backward quadratic) interpolation of the computed solution
  between time layers, which allows for a simpler evaluation of the residuals (see a discussion in Section~\ref{ssec_residuals}).
  Note that, in contrast with \cite{LX16,Kopteva21}, we are not aware of any a-priori error analysis for the L1-2 method of \cite{GSZZ14}; nevertheless, our a-posteriori error analysis applies seamlessly to this method.

  Let $u_h$ be continuous in time, linear on the first interval $[0,t_1]$, and
  piecewise-quadratic on $[t_1,T]$ as follows:
  \begin{align*}
    u_h(t)\big|_{[0,t_1]}&:=U_0\,\phi_1^0(t)+U_1\,\phi_1^1(t),\\[4pt]
    u_h(t)\big|_{[t_{k-1},t_k]}&:=U_{k-1}\,\phi_k^0(t)+U_k\,\phi_k^1(t)+y_k\,\phi_k^2(t)\quad\forall\,k\ge2.
  \end{align*}
  Here
  \[
    y_k:=\frac{\frac{U_k-U_{k-1}}{\tau_k}-\frac{U_{k-1}-U_{k-2}}{\tau_{k-1}}}{\tau_k+\tau_{k-1}}\,,
  \]
  while $\phi_k^0$ and $\phi_k^1$ are from \eqref{phi12}, and
  \[
    \phi_k^2(t):=(t-t_{k-1})(t-t_k).
  \]

  With the above piecewise-quadratic $u_h$, the L1-2 method of \cite{GSZZ14}
  can be described, similarly to the L1 method, by
  \eqref{L1method}.
  Hence, for the residual one again gets $R_h(\cdot, t_k)=0$ for $k\ge 1$,
  i.e. the residual remains a non-symmetric bubble
  on each $(t_{k-1},t_k)$ for $k>1$.
 Figure~\ref{fig:bubblesL12}
 shows a typical behaviour for the residuals of the L1-2 method on an equidistant mesh of four cells.
  \begin{figure}[tb]
    \begin{center}
%
%
\definecolor{mycolor1}{rgb}{0.00000,0.44700,0.74100}%
\begin{tikzpicture}

\begin{axis}[%
width=0.6\textwidth,
height=0.15\textwidth,
scale only axis,
xmin=0,xmax=1,
xlabel style={font=\color{white!15!black}},
xlabel={$t$},
xtick={0,0.25,0.5,0.75,1},
xticklabels={0,$\frac{1}{4}$,$\frac{1}{2}$,$\frac{3}{4}$,1},
ymode=log,
ymin=1e-16,ymax=1,
yminorticks=true,
axis background/.style={fill=white}
]
\addplot [color=mycolor1, forget plot,thick]
  table[row sep=crcr]{%
0	1\\
2.5e-11	1\\
8e-10	1\\
6.075e-09	1\\
2.56e-08	1\\
7.8125e-08	1\\
1.944e-07	1\\
4.2017e-07	1\\
8.192e-07	1\\
1.4762e-06	0.99999\\
2.5e-06	0.99999\\
4.0263e-06	0.99998\\
6.2208e-06	0.99998\\
9.2823e-06	0.99996\\
1.3446e-05	0.99995\\
1.8984e-05	0.99992\\
2.6214e-05	0.9999\\
3.5496e-05	0.99986\\
4.7239e-05	0.99981\\
6.1902e-05	0.99975\\
8e-05	0.99968\\
0.0001021	0.99959\\
0.00012884	0.99948\\
0.00016091	0.99936\\
0.00019907	0.9992\\
0.00024414	0.99902\\
0.00029703	0.99881\\
0.00035872	0.99857\\
0.00043026	0.99828\\
0.00051278	0.99795\\
0.0006075	0.99757\\
0.00071573	0.99714\\
0.00083886	0.99664\\
0.00097838	0.99609\\
0.0011359	0.99546\\
0.001313	0.99475\\
0.0015117	0.99395\\
0.0017336	0.99307\\
0.0019809	0.99208\\
0.0022556	0.99098\\
0.00256	0.98976\\
0.0028964	0.98841\\
0.0032673	0.98693\\
0.0036752	0.9853\\
0.0041229	0.98351\\
0.0046132	0.98155\\
0.0051491	0.9794\\
0.0057336	0.97707\\
0.0063701	0.97452\\
0.0070619	0.97175\\
0.0078125	0.96875\\
0.0086256	0.9655\\
0.0095051	0.96198\\
0.010455	0.95818\\
0.011479	0.95408\\
0.012582	0.94967\\
0.013768	0.94493\\
0.015042	0.93983\\
0.016409	0.93436\\
0.017873	0.92851\\
0.01944	0.92224\\
0.021115	0.91554\\
0.022903	0.90839\\
0.024811	0.90076\\
0.026844	0.89263\\
0.029007	0.88397\\
0.031308	0.87477\\
0.033753	0.86499\\
0.036348	0.85461\\
0.039101	0.8436\\
0.042017	0.83193\\
0.045106	0.81958\\
0.048373	0.80651\\
0.051827	0.79269\\
0.055475	0.7781\\
0.059326	0.7627\\
0.063388	0.74645\\
0.06767	0.72932\\
0.072179	0.71128\\
0.076926	0.69229\\
0.08192	0.67232\\
0.08717	0.65132\\
0.092685	0.62926\\
0.098476	0.6061\\
0.10455	0.58179\\
0.11093	0.55629\\
0.11761	0.52957\\
0.12461	0.50158\\
0.13193	0.47227\\
0.1396	0.44159\\
0.14762	0.40951\\
0.15601	0.37597\\
0.16477	0.34092\\
0.17392	0.30431\\
0.18348	0.2661\\
0.19345	0.22622\\
0.20384	0.18463\\
0.21468	0.14127\\
0.22598	0.096079\\
0.23775	0.04901\\
0.25	2.2204e-16\\
0.25	0.0010199\\
0.25	0.0020398\\
0.25	0.0030597\\
0.25	0.0040795\\
0.25	0.0050993\\
0.25	0.006119\\
0.25	0.0071385\\
0.25	0.0081576\\
0.25	0.0091763\\
0.25	0.010194\\
0.25	0.011211\\
0.25001	0.012227\\
0.25001	0.013241\\
0.25001	0.014253\\
0.25002	0.015263\\
0.25003	0.01627\\
0.25004	0.017272\\
0.25005	0.018271\\
0.25006	0.019264\\
0.25008	0.020251\\
0.2501	0.021231\\
0.25013	0.022203\\
0.25016	0.023166\\
0.2502	0.024118\\
0.25024	0.025058\\
0.2503	0.025985\\
0.25036	0.026897\\
0.25043	0.027792\\
0.25051	0.028669\\
0.25061	0.029525\\
0.25072	0.030359\\
0.25084	0.031169\\
0.25098	0.031952\\
0.25114	0.032706\\
0.25131	0.033428\\
0.25151	0.034116\\
0.25173	0.034768\\
0.25198	0.03538\\
0.25226	0.035949\\
0.25256	0.036472\\
0.2529	0.036948\\
0.25327	0.037371\\
0.25368	0.037739\\
0.25412	0.038048\\
0.25461	0.038296\\
0.25515	0.038479\\
0.25573	0.038592\\
0.25637	0.038633\\
0.25706	0.038598\\
0.25781	0.038483\\
0.25863	0.038286\\
0.25951	0.038001\\
0.26045	0.037626\\
0.26148	0.037158\\
0.26258	0.036593\\
0.26377	0.035928\\
0.26504	0.03516\\
0.26641	0.034287\\
0.26787	0.033306\\
0.26944	0.035857\\
0.27111	0.038663\\
0.2729	0.04161\\
0.27481	0.044697\\
0.27684	0.047922\\
0.27901	0.051283\\
0.28131	0.054775\\
0.28375	0.058392\\
0.28635	0.062127\\
0.2891	0.065971\\
0.29202	0.069911\\
0.29511	0.073935\\
0.29837	0.078026\\
0.30183	0.082165\\
0.30548	0.08633\\
0.30933	0.090496\\
0.31339	0.094632\\
0.31767	0.098706\\
0.32218	0.10268\\
0.32693	0.10651\\
0.33192	0.11015\\
0.33717	0.11355\\
0.34268	0.11665\\
0.34848	0.11937\\
0.35455	0.12166\\
0.36093	0.12342\\
0.36761	0.12456\\
0.37461	0.125\\
0.38193	0.12462\\
0.3896	0.12329\\
0.39762	0.12091\\
0.40601	0.11731\\
0.41477	0.11235\\
0.42392	0.10585\\
0.43348	0.097644\\
0.44345	0.087522\\
0.45384	0.07527\\
0.46468	0.060655\\
0.47598	0.043424\\
0.48775	0.023304\\
0.5	2.2204e-16\\
0.5	0.00066964\\
0.5	0.0013393\\
0.5	0.0020089\\
0.5	0.0026786\\
0.5	0.0033482\\
0.5	0.0040178\\
0.5	0.0046874\\
0.5	0.005357\\
0.5	0.0060265\\
0.5	0.0066959\\
0.5	0.0073653\\
0.50001	0.0080345\\
0.50001	0.0087036\\
0.50001	0.0093725\\
0.50002	0.010041\\
0.50003	0.01071\\
0.50004	0.011378\\
0.50005	0.012045\\
0.50006	0.012713\\
0.50008	0.01338\\
0.5001	0.014046\\
0.50013	0.014712\\
0.50016	0.015378\\
0.5002	0.016043\\
0.50024	0.016707\\
0.5003	0.01737\\
0.50036	0.018033\\
0.50043	0.018695\\
0.50051	0.019357\\
0.50061	0.020017\\
0.50072	0.020677\\
0.50084	0.021336\\
0.50098	0.021994\\
0.50114	0.022652\\
0.50131	0.023309\\
0.50151	0.023966\\
0.50173	0.024621\\
0.50198	0.025277\\
0.50226	0.025932\\
0.50256	0.026586\\
0.5029	0.027241\\
0.50327	0.027896\\
0.50368	0.02855\\
0.50412	0.029205\\
0.50461	0.02986\\
0.50515	0.030515\\
0.50573	0.031172\\
0.50637	0.031829\\
0.50706	0.032486\\
0.50781	0.033145\\
0.50863	0.033805\\
0.50951	0.034466\\
0.51045	0.035128\\
0.51148	0.035791\\
0.51258	0.036455\\
0.51377	0.03712\\
0.51504	0.037785\\
0.51641	0.03845\\
0.51787	0.039116\\
0.51944	0.03978\\
0.52111	0.040443\\
0.5229	0.041103\\
0.52481	0.04176\\
0.52684	0.042411\\
0.52901	0.043056\\
0.53131	0.043692\\
0.53375	0.044318\\
0.53635	0.04493\\
0.5391	0.045526\\
0.54202	0.046102\\
0.54511	0.046654\\
0.54837	0.047178\\
0.55183	0.047669\\
0.55548	0.048122\\
0.55933	0.048528\\
0.56339	0.048883\\
0.56767	0.049177\\
0.57218	0.049401\\
0.57693	0.049546\\
0.58192	0.049601\\
0.58717	0.049552\\
0.59268	0.049387\\
0.59848	0.049091\\
0.60455	0.048647\\
0.61093	0.048036\\
0.61761	0.04724\\
0.62461	0.046237\\
0.63193	0.045001\\
0.6396	0.043509\\
0.64762	0.041731\\
0.65601	0.039637\\
0.66477	0.037193\\
0.67392	0.034363\\
0.68348	0.031108\\
0.69345	0.027384\\
0.70384	0.023147\\
0.71468	0.018345\\
0.72598	0.012926\\
0.73775	0.0068321\\
0.75	2.2204e-16\\
0.75	0.00030444\\
0.75	0.00060887\\
0.75	0.00091331\\
0.75	0.0012177\\
0.75	0.0015222\\
0.75	0.0018266\\
0.75	0.002131\\
0.75	0.0024353\\
0.75	0.0027396\\
0.75	0.0030438\\
0.75	0.0033479\\
0.75001	0.0036519\\
0.75001	0.0039556\\
0.75001	0.0042592\\
0.75002	0.0045624\\
0.75003	0.0048654\\
0.75004	0.0051679\\
0.75005	0.0054699\\
0.75006	0.0057714\\
0.75008	0.0060722\\
0.7501	0.0063723\\
0.75013	0.0066716\\
0.75016	0.0069698\\
0.7502	0.007267\\
0.75024	0.007563\\
0.7503	0.0078577\\
0.75036	0.0081509\\
0.75043	0.0084424\\
0.75051	0.0087322\\
0.75061	0.0090201\\
0.75072	0.0093058\\
0.75084	0.0095892\\
0.75098	0.0098702\\
0.75114	0.010149\\
0.75131	0.010424\\
0.75151	0.010696\\
0.75173	0.010966\\
0.75198	0.011231\\
0.75226	0.011493\\
0.75256	0.011751\\
0.7529	0.012005\\
0.75327	0.012255\\
0.75368	0.0125\\
0.75412	0.01274\\
0.75461	0.012976\\
0.75515	0.013206\\
0.75573	0.01343\\
0.75637	0.013648\\
0.75706	0.013861\\
0.75781	0.014067\\
0.75863	0.014266\\
0.75951	0.014459\\
0.76045	0.014644\\
0.76148	0.014822\\
0.76258	0.014992\\
0.76377	0.015154\\
0.76504	0.015308\\
0.76641	0.015452\\
0.76787	0.015588\\
0.76944	0.015714\\
0.77111	0.01583\\
0.7729	0.015935\\
0.77481	0.01603\\
0.77684	0.016113\\
0.77901	0.016185\\
0.78131	0.016244\\
0.78375	0.01629\\
0.78635	0.016322\\
0.7891	0.01634\\
0.79202	0.016342\\
0.79511	0.016329\\
0.79837	0.016298\\
0.80183	0.016249\\
0.80548	0.016181\\
0.80933	0.016093\\
0.81339	0.015983\\
0.81767	0.01585\\
0.82218	0.015692\\
0.82693	0.015508\\
0.83192	0.015295\\
0.83717	0.015052\\
0.84268	0.014776\\
0.84848	0.014465\\
0.85455	0.014116\\
0.86093	0.013727\\
0.86761	0.013293\\
0.87461	0.012812\\
0.88193	0.01228\\
0.8896	0.011693\\
0.89762	0.011046\\
0.90601	0.010334\\
0.91477	0.0095521\\
0.92392	0.0086949\\
0.93348	0.0077562\\
0.94345	0.0067293\\
0.95384	0.0056069\\
0.96468	0.0043815\\
0.97598	0.0030446\\
0.98775	0.0015873\\
1	2.2204e-16\\
};
\end{axis}
\end{tikzpicture}%
    \end{center}\vspace{-0.6cm}
    \caption{Typical behaviour of the residual of the L1-2 method\label{fig:bubblesL12}
    (shown in the same setting as in Figure~\ref{fig:bubblesL1}).}
  \end{figure}
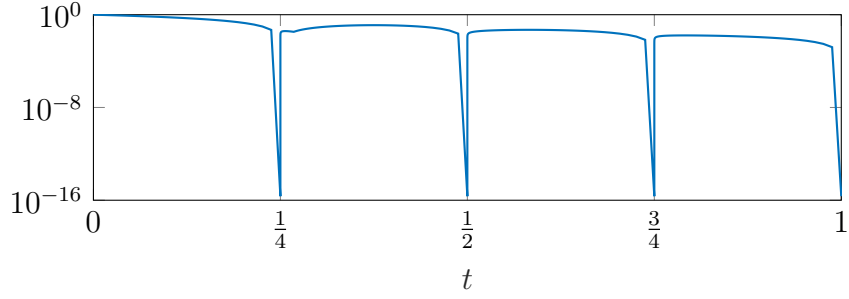

   To implement this method, one needs to reevaluate $D^\alpha_t u_h(\cdot,t_k)$, while to
   compute the residual $R_h$, one, more generally,  needs to compute the non-local $D^\alpha_t u_h(\cdot,t)$ for any $t>0$.
   For the first time interval, i.e. for $0<t\le t_1$ we recall
   \eqref{D_alph_L1}, which simplifies to
     \begin{align*}
    D_t^\alpha u_h(t)
      &= \frac{1}{\Gamma(2-\alpha)}\,\frac{U_1-U_{0}}{\tau_1}\,t^{1-\alpha}.
  \end{align*}
  For $t_{k-1}< t\leq t_k$ $\forall\,k\ge2$, a more elaborate calculation using \eqref{CaputoEquiv} yields
  \begin{align*}
    D_t^\alpha u_h(t)
      = &\frac{1}{\Gamma(2-\alpha)}\left(-\sum_{j=1}^{k-1}\frac{U_j-U_{j-1}}{\tau_j}(t-s)^{1-\alpha}\big|^{t_j}_{t_{j-1}}
                        +\frac{U_k-U_{k-1}}{\tau_k}(t-t_{k-1})^{1-\alpha}\right)\\
          &+\frac{1}{\Gamma(1-\alpha)}\,\sum_{j=2}^{k-1}y_j\left( \frac{2}{2-\alpha}(t-s)^{2-\alpha}\big|_{t_{j-1}}^{t_j}
                                                                     -\frac{2t-{t_{j-1}-t_j}}{1-\alpha}(t-s)^{1-\alpha}\big|_{t_{j-1}}^{t_j}
                                                                     \right)\\
          &+\frac{1}{\Gamma(1-\alpha)}\,y_k\left( -\frac{2}{2-\alpha}(t-t_{k-1})^{2-\alpha}
                                                                     +\frac{2t-t_{k-1}-t_k}{1-\alpha}(t-t_{k-1})^{1-\alpha}
                                                                     \right).
  \end{align*}

  \subsection{Continuous collocation methods}\label{ssec_collocation}
  To define higher-order methods,
  it is convenient to define computed solutions $u_h$ as piecewise polynomials of higher degrees
  within the framework of continuous collocation methods; see, e.g.,~\cite{Brunner04}.
  Note that we are not aware of any a-priori error analysis for high-order collocation methods in the context of time-fractional parabolic equation~\eqref{problem}; nevertheless, our a-posteriori error analysis and our adaptive algorithm are immediately applicable to such methods,
  as is also demonstrated by numerical experiments in Section~\ref{sec:numerics}.
 A word of caution should be added: one needs to ensure a stable implementation of the method itself and of the adaptive time stepping algorithm. While a careful implementation is essential even for the L1 method,
  higher-order methods particularly require a more sophisticated treatment; see Sections~\ref{ssec_residuals} and \ref{ssec_implem} for further discussion.

 Let $u_h$ be a piecewise polynomial of degree $m\ge1$, defined on a subgrid  of
  collocation points $\{t_k^\ell\}_{\ell\in\{0,\dots,m\}}$ on each time interval $[t_{k-1},t_k]$ with $t_k^\ell:=t_{k-1}+c_\ell\cdot(t_k-t_{k-1})$,
  $\{c_\ell\}\subset[0,1]$, $c_0=0$, and $c_m=1$.
 While the  choice of the collocation
  points is quite arbitrary, in our experiments we shall simply use equidistant points.

  Now,
with any set $\{\phi_k^\ell\}$ of basis functions of $\PS_m(t_{k-1},t_k)$, the polynomial space of degree $m$ over $[t_{k-1},t_k]$,
on which, for convenience, we impose
  \beq\label{basis_delta}
  \phi_k^\ell(t^0_k)=\phi_k^\ell(t_{k-1})=\delta_{\ell,0}\quad\mbox{and}\quad \phi_k^\ell(t^m_k)=\phi_k^\ell(t_k)=\delta_{\ell,m}
    \qquad\forall\,
    \ell\in\{0,\dots,m\},
\eeq
  let
  \begin{align*}
    u_h(t)\big|_{(t_{k-1},t_k)}
      &=\sum_{\ell=0}^m U_k^\ell\,\phi_k^\ell(t)= U_{k-1}^m\,\phi_k^0(t)+\sum_{\ell=1}^m U_k^\ell\,\phi_k^\ell(t)\,.
  \end{align*}
Here, in agreement with \eqref{basis_delta}, $U_k^0:=u_h(\cdot,t_{k-1})$ and $U_k^m:=u_h(\cdot,t_k)$,
  and we additionally impose the continuity of $u_h$ in time, which is equivalent to $U_{k-1}^m=U_{k}^0$, while $U_{0}^m:=U_{1}^0$.

  With the above definitions, a continuous collocation method is given by
  \beq\label{Col_method}
    (D_t^\alpha+\LL)\,u_h(x, t^\ell_k)=f(x, t_k^\ell)\qquad \mbox{for}\;\; x\in\Omega,\;\;  \ell\in\{1,\ldots, m\},\;\;k=1\ldots, M,
  \eeq
  subject to $u_h^0:=u_0$ and $u_h=0$ on $\pt \Omega$.
  A comparison with \eqref{L1method} shows that for $m=1$ the above collocation method is identical with the L1 method.

  To implement the above method, one needs to evaluate $D_t^\alpha u_h(\cdot,t_k^\ell)$, while to compute the residual, one requires
  a more general $D_t^\alpha u_h(\cdot,t)$. For the latter, for $t_{k-1}< t\leq t_k$ $\forall\,k\ge 1$, one gets
  \begin{align}\notag
    D_t^\alpha u_h(t)
      &=\frac{1}{\Gamma(1-\alpha)}\bigg(\sum_{j=1}^{k-1}\int_{t_{j-1}}^{t_j}\!\!\bigg[ U_{j-1}^m\,\partial\phi_j^0(s)
                                                                                +\sum_{\ell=1}^m
                                                                                U_j^\ell\,\partial\phi_j^\ell(s) \bigg](t-s)^{-\alpha}\ds+\\&\hspace{3cm}
                                        \int_{t_{k-1}}^t\!\!
                                        \bigg[ U_{k-1}^m\,\partial\phi_k^0(s)
                                                                                +\sum_{\ell=1}^m
                                                                                U_k^\ell\,\partial\phi_k^\ell(s) \bigg](t-s)^{-\alpha}\ds\
                                        \bigg).
\label{D_alp_coll}
  \end{align}
  A stable implementation of this formula will be discussed in the Section~\ref{ssec_implem}.

  Thus, to implement the collocation method \eqref{Col_method}, on each time interval $(t_{k-1},t_k]$ one needs to solve a linear system in
  $\{U_k^\ell\}_{\ell=1}^m$; the right-hand side of this linear system is computed using the values of $u_h$ for $t\le t_{k-1}$.

  Note that the definition of the method \eqref{Col_method}  immediately implies that $R_h(\cdot, t_k^\ell)=0$, i.e. the residual vanishes at all collocation points, except for $t=0$.
  A typical behaviour of the residuals is illustrated in
  Figure~\ref{fig:bubblesColl}. 
  \begin{figure}[htb]
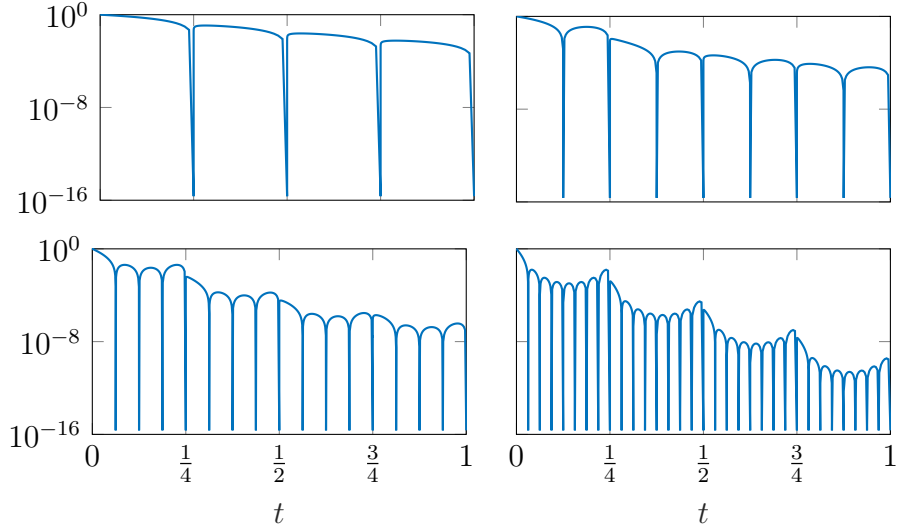

    \begin{center}
%
%
\definecolor{mycolor1}{rgb}{0.00000,0.44700,0.74100}%
\begin{tikzpicture}

\begin{axis}[%
width=0.3\textwidth,
height=0.15\textwidth,
scale only axis,
xmin=0,xmax=1,
xlabel style={font=\color{white!15!black}},
xtick={0,0.25,0.5,0.75,1},
xticklabels={},
ymode=log,
ymin=1e-16,ymax=1,
yminorticks=true,
axis background/.style={fill=white}
]
\addplot [color=mycolor1, forget plot,thick]
  table[row sep=crcr]{%
0	1\\
2.5e-11	1\\
8e-10	1\\
6.075e-09	1\\
2.56e-08	1\\
7.8125e-08	1\\
1.944e-07	1\\
4.2017e-07	1\\
8.192e-07	1\\
1.4762e-06	0.99999\\
2.5e-06	0.99999\\
4.0263e-06	0.99998\\
6.2208e-06	0.99998\\
9.2823e-06	0.99996\\
1.3446e-05	0.99995\\
1.8984e-05	0.99992\\
2.6214e-05	0.9999\\
3.5496e-05	0.99986\\
4.7239e-05	0.99981\\
6.1902e-05	0.99975\\
8e-05	0.99968\\
0.0001021	0.99959\\
0.00012884	0.99948\\
0.00016091	0.99936\\
0.00019907	0.9992\\
0.00024414	0.99902\\
0.00029703	0.99881\\
0.00035872	0.99857\\
0.00043026	0.99828\\
0.00051278	0.99795\\
0.0006075	0.99757\\
0.00071573	0.99714\\
0.00083886	0.99664\\
0.00097838	0.99609\\
0.0011359	0.99546\\
0.001313	0.99475\\
0.0015117	0.99395\\
0.0017336	0.99307\\
0.0019809	0.99208\\
0.0022556	0.99098\\
0.00256	0.98976\\
0.0028964	0.98841\\
0.0032673	0.98693\\
0.0036752	0.9853\\
0.0041229	0.98351\\
0.0046132	0.98155\\
0.0051491	0.9794\\
0.0057336	0.97707\\
0.0063701	0.97452\\
0.0070619	0.97175\\
0.0078125	0.96875\\
0.0086256	0.9655\\
0.0095051	0.96198\\
0.010455	0.95818\\
0.011479	0.95408\\
0.012582	0.94967\\
0.013768	0.94493\\
0.015042	0.93983\\
0.016409	0.93436\\
0.017873	0.92851\\
0.01944	0.92224\\
0.021115	0.91554\\
0.022903	0.90839\\
0.024811	0.90076\\
0.026844	0.89263\\
0.029007	0.88397\\
0.031308	0.87477\\
0.033753	0.86499\\
0.036348	0.85461\\
0.039101	0.8436\\
0.042017	0.83193\\
0.045106	0.81958\\
0.048373	0.80651\\
0.051827	0.79269\\
0.055475	0.7781\\
0.059326	0.7627\\
0.063388	0.74645\\
0.06767	0.72932\\
0.072179	0.71128\\
0.076926	0.69229\\
0.08192	0.67232\\
0.08717	0.65132\\
0.092685	0.62926\\
0.098476	0.6061\\
0.10455	0.58179\\
0.11093	0.55629\\
0.11761	0.52957\\
0.12461	0.50158\\
0.13193	0.47227\\
0.1396	0.44159\\
0.14762	0.40951\\
0.15601	0.37597\\
0.16477	0.34092\\
0.17392	0.30431\\
0.18348	0.2661\\
0.19345	0.22622\\
0.20384	0.18463\\
0.21468	0.14127\\
0.22598	0.096079\\
0.23775	0.04901\\
0.25	2.2204e-16\\
0.25	0.0022187\\
0.25	0.0044373\\
0.25	0.006656\\
0.25	0.0088746\\
0.25	0.011093\\
0.25	0.013312\\
0.25	0.01553\\
0.25	0.017749\\
0.25	0.019967\\
0.25	0.022184\\
0.25	0.024401\\
0.25001	0.026618\\
0.25001	0.028834\\
0.25001	0.031048\\
0.25002	0.033262\\
0.25003	0.035473\\
0.25004	0.037683\\
0.25005	0.03989\\
0.25006	0.042095\\
0.25008	0.044296\\
0.2501	0.046493\\
0.25013	0.048686\\
0.25016	0.050874\\
0.2502	0.053056\\
0.25024	0.055231\\
0.2503	0.057398\\
0.25036	0.059557\\
0.25043	0.061707\\
0.25051	0.063846\\
0.25061	0.065973\\
0.25072	0.068087\\
0.25084	0.070187\\
0.25098	0.072271\\
0.25114	0.074338\\
0.25131	0.076386\\
0.25151	0.078413\\
0.25173	0.080417\\
0.25198	0.082397\\
0.25226	0.084351\\
0.25256	0.086277\\
0.2529	0.088171\\
0.25327	0.090033\\
0.25368	0.091859\\
0.25412	0.093646\\
0.25461	0.095393\\
0.25515	0.097097\\
0.25573	0.098754\\
0.25637	0.10036\\
0.25706	0.10192\\
0.25781	0.10342\\
0.25863	0.10485\\
0.25951	0.10623\\
0.26045	0.10754\\
0.26148	0.10878\\
0.26258	0.10995\\
0.26377	0.11103\\
0.26504	0.11204\\
0.26641	0.11296\\
0.26787	0.11378\\
0.26944	0.11452\\
0.27111	0.11515\\
0.2729	0.11567\\
0.27481	0.11609\\
0.27684	0.11639\\
0.27901	0.11658\\
0.28131	0.11663\\
0.28375	0.11656\\
0.28635	0.11635\\
0.2891	0.116\\
0.29202	0.1155\\
0.29511	0.11485\\
0.29837	0.11404\\
0.30183	0.11307\\
0.30548	0.11192\\
0.30933	0.1106\\
0.31339	0.1091\\
0.31767	0.1074\\
0.32218	0.10551\\
0.32693	0.10342\\
0.33192	0.10111\\
0.33717	0.098596\\
0.34268	0.095855\\
0.34848	0.092884\\
0.35455	0.089676\\
0.36093	0.086225\\
0.36761	0.082522\\
0.37461	0.078561\\
0.38193	0.074333\\
0.3896	0.069833\\
0.39762	0.065051\\
0.40601	0.05998\\
0.41477	0.054613\\
0.42392	0.048941\\
0.43348	0.042955\\
0.44345	0.036649\\
0.45384	0.030013\\
0.46468	0.023039\\
0.47598	0.015718\\
0.48775	0.0080413\\
0.5	2.2204e-16\\
0.5	0.00044392\\
0.5	0.00088784\\
0.5	0.0013318\\
0.5	0.0017757\\
0.5	0.0022196\\
0.5	0.0026635\\
0.5	0.0031074\\
0.5	0.0035512\\
0.5	0.003995\\
0.5	0.0044388\\
0.5	0.0048825\\
0.50001	0.005326\\
0.50001	0.0057695\\
0.50001	0.0062127\\
0.50002	0.0066558\\
0.50003	0.0070985\\
0.50004	0.007541\\
0.50005	0.007983\\
0.50006	0.0084246\\
0.50008	0.0088656\\
0.5001	0.009306\\
0.50013	0.0097457\\
0.50016	0.010184\\
0.5002	0.010622\\
0.50024	0.011059\\
0.5003	0.011495\\
0.50036	0.011929\\
0.50043	0.012361\\
0.50051	0.012792\\
0.50061	0.013221\\
0.50072	0.013647\\
0.50084	0.014072\\
0.50098	0.014493\\
0.50114	0.014912\\
0.50131	0.015328\\
0.50151	0.01574\\
0.50173	0.016148\\
0.50198	0.016553\\
0.50226	0.016953\\
0.50256	0.017348\\
0.5029	0.017738\\
0.50327	0.018122\\
0.50368	0.018501\\
0.50412	0.018873\\
0.50461	0.019238\\
0.50515	0.019596\\
0.50573	0.019946\\
0.50637	0.020287\\
0.50706	0.02062\\
0.50781	0.020942\\
0.50863	0.021255\\
0.50951	0.021556\\
0.51045	0.021846\\
0.51148	0.022124\\
0.51258	0.022389\\
0.51377	0.022639\\
0.51504	0.022875\\
0.51641	0.023096\\
0.51787	0.0233\\
0.51944	0.023486\\
0.52111	0.023655\\
0.5229	0.023803\\
0.52481	0.023932\\
0.52684	0.024039\\
0.52901	0.024123\\
0.53131	0.024183\\
0.53375	0.024218\\
0.53635	0.024226\\
0.5391	0.024207\\
0.54202	0.024159\\
0.54511	0.024079\\
0.54837	0.023968\\
0.55183	0.023823\\
0.55548	0.023642\\
0.55933	0.023425\\
0.56339	0.023168\\
0.56767	0.022872\\
0.57218	0.022532\\
0.57693	0.022149\\
0.58192	0.021719\\
0.58717	0.021241\\
0.59268	0.020713\\
0.59848	0.020133\\
0.60455	0.019498\\
0.61093	0.018806\\
0.61761	0.018056\\
0.62461	0.017244\\
0.63193	0.016369\\
0.6396	0.015427\\
0.64762	0.014418\\
0.65601	0.013337\\
0.66477	0.012183\\
0.67392	0.010954\\
0.68348	0.0096454\\
0.69345	0.0082561\\
0.70384	0.0067831\\
0.71468	0.0052237\\
0.72598	0.0035752\\
0.73775	0.0018349\\
0.75	2.2204e-16\\
0.75	0.00010237\\
0.75	0.00020474\\
0.75	0.00030711\\
0.75	0.00040948\\
0.75	0.00051185\\
0.75	0.00061421\\
0.75	0.00071658\\
0.75	0.00081894\\
0.75	0.00092129\\
0.75	0.0010236\\
0.75	0.0011259\\
0.75001	0.0012283\\
0.75001	0.0013305\\
0.75001	0.0014328\\
0.75002	0.001535\\
0.75003	0.0016371\\
0.75004	0.0017392\\
0.75005	0.0018412\\
0.75006	0.0019432\\
0.75008	0.002045\\
0.7501	0.0021467\\
0.75013	0.0022482\\
0.75016	0.0023496\\
0.7502	0.0024509\\
0.75024	0.0025519\\
0.7503	0.0026526\\
0.75036	0.0027531\\
0.75043	0.0028533\\
0.75051	0.0029532\\
0.75061	0.0030527\\
0.75072	0.0031518\\
0.75084	0.0032504\\
0.75098	0.0033486\\
0.75114	0.0034462\\
0.75131	0.0035432\\
0.75151	0.0036395\\
0.75173	0.0037351\\
0.75198	0.0038299\\
0.75226	0.0039239\\
0.75256	0.004017\\
0.7529	0.0041091\\
0.75327	0.0042001\\
0.75368	0.0042899\\
0.75412	0.0043785\\
0.75461	0.0044658\\
0.75515	0.0045516\\
0.75573	0.0046359\\
0.75637	0.0047185\\
0.75706	0.0047993\\
0.75781	0.0048782\\
0.75863	0.0049551\\
0.75951	0.0050298\\
0.76045	0.0051022\\
0.76148	0.0051721\\
0.76258	0.0052394\\
0.76377	0.0053038\\
0.76504	0.0053652\\
0.76641	0.0054234\\
0.76787	0.0054782\\
0.76944	0.0055294\\
0.77111	0.0055767\\
0.7729	0.00562\\
0.77481	0.0056588\\
0.77684	0.005693\\
0.77901	0.0057223\\
0.78131	0.0057464\\
0.78375	0.005765\\
0.78635	0.0057776\\
0.7891	0.0057841\\
0.79202	0.0057839\\
0.79511	0.0057767\\
0.79837	0.0057621\\
0.80183	0.0057397\\
0.80548	0.0057091\\
0.80933	0.0056696\\
0.81339	0.0056209\\
0.81767	0.0055624\\
0.82218	0.0054937\\
0.82693	0.005414\\
0.83192	0.0053229\\
0.83717	0.0052198\\
0.84268	0.0051039\\
0.84848	0.0049747\\
0.85455	0.0048315\\
0.86093	0.0046735\\
0.86761	0.0045001\\
0.87461	0.0043106\\
0.88193	0.0041041\\
0.8896	0.0038798\\
0.89762	0.003637\\
0.90601	0.0033749\\
0.91477	0.0030925\\
0.92392	0.0027891\\
0.93348	0.0024638\\
0.94345	0.0021156\\
0.95384	0.0017437\\
0.96468	0.0013471\\
0.97598	0.00092491\\
0.98775	0.0004762\\
1	2.2204e-16\\
};
\end{axis}
\end{tikzpicture}%
      \input{src/residualsColl2}\\
      \input{src/residualsColl4}
      \input{src/residualsColl8}
    \end{center}\vspace{-0.6cm}
    \caption{Typical behaviour of the residuals of collocation methods on an equidistant temporal mesh of four cells
    for $m\in\{1,2,4,8\}$ from top left to bottom right\label{fig:bubblesColl}  (shown in the same setting as in Figure~\ref{fig:bubblesL1}).}
  \end{figure}
  Importantly, in view of the bubble nature of the residuals, when computing the residual $R_h$, one should employ sufficiently many well distributed
  sampling points between any two consecutive collocation points.
  For example, in our experiments we have used $20$ sampling points per time interval; see also Figure~\ref{fig:sampling}.

    \subsection{Computation of the residuals. Discussion of alternative discretizations}\label{ssec_residuals}
  Recall that the residuals of all considered methods exhibit a bubble-type behaviour. More formally,
  let  $R_h^I$ be the continuous piecewise-polynomial interpolant of  $R_h$ in time,
  defined using exactly the same interpolation points and definitions as used for the corresponding computed solution $u_h$.
  Then the definitions of the above methods immediately imply that $R_h^I(t)=0$ for $t\ge t_1$.
  More generally,
  for the L1 and L1-2 methods, note that $D_t^\alpha u_h^0(\cdot, 0)=0$ (as $u_h$ is linear on the first time interval);
  so  $R_h(\cdot,0)=\LL u_0-f(\cdot,0)$ implies that
  $R_h^I(t)=[\LL u_0-f(\cdot, 0)](1-t/t_1)^+$ for $t>0$.
  For the considered collocation methods with $m>1$, one additionally has
  $R_h^I(t_1^\ell)=0$ at all interior collocation points over the first time interval, so
  $R_h^I(t)=R_h^I(\cdot,0)\,\varphi_1^0(t)$ for $t>0$, where
  $\varphi_1^0\in\PS_m(0,t_1)$, subject to $\varphi_1^0(t_1^\ell)=\delta_{0,\ell}$ $\forall\,\ell=0,\ldots,m$
  (i.e., in general, $\varphi_1^0$ may be different from $\phi_1^0$ in \eqref{basis_delta}), and
  $\varphi_1^0$ vanishes for $t\ge t_1$.

  Now, suppose that the spatial operator $\LL$ in \eqref{problem} is time-independent.
  Then $(\LL u_h)^I=\LL (u_h)^I=\LL u_h$ implies
  $R_h-R_h^I=(D_t^\alpha u_h-f)-(D_t^\alpha u_h-f)^I$.
  Hence,  $R_h$ can be computed by sampling, using parallel/vector evaluations and  without a direct application of $\LL$
  (or its discrete version)
  to $u_h$.

  Next, suppose that $\LL=\LL(t)$ in \eqref{problem}. Then  $R_h-R_h^I$ includes an additional ingredient
  $\LL u_h-(\LL u_h)^I$, which can also be approximated by sampling, but with fewer points.
  For example, for the L1 method, $(\LL u_h)^I$ is linear on each time interval $(t_{k-1},t_k)$ and equal to $\LL u_h$ at the end points.
  Depending on the coefficients of $\LL(t)$, a quadratic approximation  may be adequate for $\LL u_h$, in which case only one sample point per time interval
  will be required for this term.

  Finally, note that some discretizations of $D_t^\alpha$ do not naturally lead to residuals vanishing at the nodes of the temporal mesh.
  Recall, for example, the alternative L2 method considered
  in \cite{LX16,Kopteva21}. For the latter, assuming that $\LL$ is time-independent, one still enjoys
  $R_h-R_h^I=(D_t^\alpha u_h-f)-(D_t^\alpha u_h-f)^I$,
  Although now $R_h^I$ does not vanish at the mesh nodes, while the values  of $R_h(\cdot, t_k)$ (required to compute $R_h^I$) involve
  $\LL u_h(\cdot, t_k)$, the latter can be computed from the definition of the numeral method without a single additional application of $\LL$
  to the computed solution.

  {Another popular discretization that we do not consider in this paper is
  Alikhanov's L2-1${}_\sigma$ scheme (described, e.g., in \cite[Section~4]{Kopteva_Meng}).
  Recall that this scheme is similar to the above L2 methods with the main difference in that, when computing $u_h(\cdot,t_k)$, one assumes that $u_h$ is piecewise-quadratic on $(0,t_{k-1})$ and linear
  on $(t_{k-1},t_k)$, while the higher-order accuracy is ensured by replacing
  \eqref{L1method} with
$(D_t^\alpha+\LL)\,u_h(x, t^*_k)=f(x, t^*_k)$, where $t_k^*:=t_k-\frac12\alpha\tau_k$.
Despite $u_h$ being linear in time on $(t_{k-1},t_k)$, this choice of $t_k^*$
yields higher-order accuracy of order $3-\alpha$ at $t=t_{k}$.
Consequently, the general error estimation methodology still applies, but
for various a-posteriori error estimates of type~\eqref{L2_error} to remain sharp, an appropriate quadratic reconstruction of $u_h$
is to be used on $(t_{k-1},t_k)$.

More generally, our error estimation methodology is applicable to essentially any continuous-in-time computed
solution independently of the method (the continuity in time,
while being formally  required
by Theorems~\ref{the_L2} and~\ref{the_Linfty}, is essential for bounded residuals).
As such, it is also applicable for discontinuous
collocation discretizations \cite[p.\,84]{Brunner04} and discontinuous Galerkin methods
\cite{McLean_2020}, once an appropriate continuous-in-time reconstruction of the computed solution is generated
(see, e.g., \cite[Section 6]{McLean_2020}) and used as $u_h(\cdot, t)$.
A further numerical study in this direction is certainly warranted and will be presented elsewhere.
  }

  \section{Computationally stable implementation}\label{ssec_implem}

 Note that the above formulas for $D_t^\alpha u_h(t)$, such as \eqref{D_alph_L1} for the L1 method, are used both in the implementation of the considered method itself and the computation of relevant residuals.
  A direct implementation of such formulas using exact integration generally yields  numerically unstable solutions.
  In this Section, we will comment on some useful improvements that can be attained using certain reformulations.
  \begin{itemize}
  \item
  Hence, we obtain
  numerically stable and efficient implementations for all considered methods
  (including computations of their residuals)
   with $\alpha$ at least within the range between {$0.1$} and $0.999$ and
  for values of $TOL$ as small as $10^{-8}$.
  \end{itemize}

  Such improvements will be described here by means of MatLab commands, while similar strategies may be employed in all other scientific programming languages. In particular, we rely on the two MatLab commands,
  \texttt{expm1} and \texttt{log1p}, which, being mathematically equivalent to
    \beq\label{MatLab}
   \texttt{expm1}(s)=\exp(s)-1,\qquad \texttt{log1p}(s)=\ln(1+s),
  \eeq
  are used for a more robust evaluation near $s=0$.

  Other possible strategies include higher-precision computations
  (e.g., with Multiprecision Computing Toolbox for MatLab used in \cite{JLZ19})
   and adaptive quadrature routines.
In fact, for higher-order methods, we shall combine
 \eqref{MatLab} with
an adaptive quadrature rule  in the form of MatLab  function \texttt{integral}
  (which employs adaptive quadrature using a 7-point Gauß- with an 15-point Kronrod-quadrature rule to vector-valued functions; see \cite{Shampine08}).

 However, the reader should be cautioned against applying  adaptive quadrature routines to directly compute integrals in \eqref{D_alph_L1}
 or \eqref{D_alp_coll} (in view of singular integrals over $(t_{k-1},t)$, as well as over $(t_{j-1},t_j)$ when the sampling point $t\approx t_j$).
  For example, a simple computational test shows that, despite its versatility, \texttt{integral} becomes appallingly inaccurate when applied
  to a simple singular integral $\int_0^1 s^{-\alpha}ds$ as $\alpha\rightarrow 1^-$.

 \subsubsection*{L1 method}

  The reason for numerical instabilities can be easily understood by examining
 the explicit formula \eqref{D_alph_L1} for $D_t^\alpha u_h$ of the simplest  L1 method.
 The latter formula involves the evaluation of
 $(t-s)^{1-\alpha}\big|_{t_{j-1}}^{t_j}$, i.e. the difference of
 two nearly equal numbers (assuming that $(t-t_j)\approx (t-t_{j-1})$), which
 leads to noticeable round-off errors.
The following simple reformulation using~\eqref{MatLab} immediately rectifies this instability.
For $t> t_j$, set
  \[
    \dt_j(t):=t-t_j,\qquad
    \kappa_j(t):=\ln\left(\frac{\dt_{j}(t)}{\dt_{j-1}(t)}\right)
              =\texttt{log1p}\left(-\frac{\tau_j}{t-t_{j-1}}\right).
  \]
Now one gets
  \begin{align}
    (t-s)^{1-\alpha}\big|_{t_{j-1}}^{t_j}
      &=\dt_j(t)^{1-\alpha}-\dt_{j-1}(t)^{1-\alpha}
       = \dt_{j-1}(t)^{1-\alpha}\left(\left(\frac{\dt_j(t)}{\dt_{j-1}(t)}\right)^{1-\alpha}-1\right)\notag\\
      &= \dt_{j-1}(t)^{1-\alpha}\,\texttt{expm1}\left((1-\alpha)\,\kappa_j(t)\right).\label{eq:stable_L1}
  \end{align}
  Thus, \eqref{D_alph_L1}, for $t_{k-1}<t\le t_k$, allows a computationally stable reformulation
\begin{align}
      &D^\alpha_t u_h(t)=\notag\\
      &= \frac{1}{\Gamma(2-\alpha)}\left(-\sum_{j=1}^{k-1}\frac{U_j-U_{j-1}}{\tau_j}
      \,\dt_{j-1}(t)^{1-\alpha}\,\texttt{expm1}\left((1-\alpha)\,\kappa_j(t)\right)
                                                         +\frac{U_k-U_{k-1}}{\tau_k}{d_{k-1}(t)}^{1-\alpha}\right).
                                                         \label{L1_stable_Dt}
\end{align}
 One may worry that, due to the summation, each $U_j$ for $j<k$ is still multiplied by a difference of two possibly close numbers.
Nevertheless, we observed stable performance of the above reformulation
in all our experiments.

  \subsubsection*{Higher-order methods. Adaptive quadrature}

  Numerical instabilities become even more intractable in the context of higher-order methods, and even more so
  since the higher accuracy, offered by such methods, is availed only if the computations are performed with higher precision.
  Below we shall describe a stable implementation
  for the collocation methods of arbitrary order $m$.
  Note that for $m=1$, this approach reduces to the above \eqref{L1_stable_Dt}.
  We also used
  a version of this approach for a stable implementation of the L1-2 method.

   For the collocation methods, recall that \eqref{D_alp_coll} for $D_t^\alpha u_h(t)$, where $t_j\le t_{k-1}< t\leq t_k$, involves the integrals
   of two types:
  \[
      I_{hist}^{j,\ell}:=\int_{t_{j-1}}^{t_j}\partial\phi_j^\ell(s)\,(t-s)^{-\alpha}\,\ds
      \quad\text{and}\quad
      I_{sing}^{\ell}:=\int_{t_{k-1}}^t\partial\phi_k^\ell(s)\,(t-s)^{-\alpha}\,\ds,
      \quad \ell\in\{0,\ldots,m\},
  \]
  which we shall respectively refer to as the history integrals and the singular integrals.

  It is convenient to describe a set of basis functions $\{\phi_j^\ell\}_{\ell\in\{0,\ldots,m\}}$ on each $[t_{j-1},t_j]$
  using the reference interval $[0,1]$ by
\beq\label{phi_psi}
    \phi_j^\ell(s)=\psi^\ell\left( \frac{s-t_{j-1}}{\tau_j} \right)=:\psi^\ell(\sigma)
    ,\qquad\ell\in\{0,\dots,m\}.
  \eeq
  Here, in agreement with \eqref{basis_delta}, we also impose $\psi^\ell(0)=\delta_{\ell,0}$ and $\psi^\ell(1)=\delta_{\ell,m}$.

  In the case of the L1 method (which corresponds to $m=1$), the history integrals led to a possibly unstable evaluation of
  $(t-s)^{1-\alpha}\big|_{t_{j-1}}^{t_j}$, and, unsurprisingly, similar instabilities may occur when computing $I_{hist}^{j,\ell}$.
We rectify these by splitting $\partial\phi_j^\ell(s)$ in $I_{hist}^{j,\ell}$  as $\partial\phi_j^\ell(t_j)+[\partial\phi_j^\ell(s)-\partial\phi_j^\ell(t_j)]$.
Now
  $ I_{hist}^{j,\ell}$ can be split as $\bar I_{hist}^{j,\ell}+[I_{hist}^{j,\ell}-\bar I_{hist}^{j,\ell}]$,
  where
\begin{align*}
  \bar I_{hist}^{j,\ell}&:=\partial\phi_j^\ell(t_j)
\int_{t_{j-1}}^{t_j}(t-s)^{-\alpha}\,\ds
=\frac{\partial\psi^\ell(1)}{(1-\alpha)\,\tau_j}\,
(t-s)^{1-\alpha}\big|_{t_{j-1}}^{t_j}
\\
&=\frac{\partial\psi^\ell(1)}{(1-\alpha)\,\tau_j}\,
\dt_{j-1}(t)^{1-\alpha}\,\texttt{expm1}\left((1-\alpha)\,\kappa_j(t)\right).
\end{align*}
Here we used $\frac{d}{ds}\phi_j^\ell(t_j)=\tau_j^{-1}\frac{d}{d\sigma}\psi^\ell(1)$ and the stable-implementation formula \eqref{eq:stable_L1}.

For the remaining component $I_{hist}^{j,\ell}-\bar I_{hist}^{j,\ell}$ of $I_{hist}^{j,\ell}$ one gets
  \begin{align}\notag
   I_{hist}^{j,\ell}-\bar I_{hist}^{j,\ell}
  &=\int_{t_{j-1}}^{t_j}[\partial\phi_j^\ell(s)-\partial\phi_j^\ell(t_j)]\,(t-s)^{-\alpha}\,\ds\\
  &={d_{j-1}(t)}^{-\alpha}\int_{0}^{1}[\partial\psi^\ell(\sigma)-\partial\psi^\ell(1)]\,
\exp\left( -\alpha\logp\left( -\frac{\tau_j}{t-t_{j-1}}\sigma \right) \right)\,
  \dsigma\,,
  \label{integral_adaptive}
  \end{align}
 where we used
 $$
 (t-s)^{-\alpha}=(t-t_{j-1}-\tau_j\sigma)^{-\alpha}={d_{j-1}(t)}^{-\alpha}\,\exp\left( -\alpha\logp\left( -\frac{\tau_j}{t-t_{j-1}}\sigma \right) \right).
 $$
 Note that  the above integral is non-singular, as
 $(t-s)^{-\alpha}\lesssim (t-t_{j-1})^{-\alpha}\,(1-\sigma)^{-\alpha}$,
 while $|\partial\psi^\ell(\sigma)-\partial\psi^\ell(1)|\lesssim 1-\sigma$.
 Hence, an adaptive quadrature routine yields a fast and efficient evaluation of the latter integral.
 For example, we employed \texttt{integral}
 with an appropriate integrand $\chi(\sigma)$ from \eqref{integral_adaptive} in the form
  \[
    \texttt{integral(@($\sigma$)}
    \chi(\sigma)
    \texttt{, 0,1,'ArrayValued','true')}.
  \]
  In addition we supply the options \texttt{('RelTol',1e-16,'WayPoints',pts)} with increased relative tolerance for evaluating the
  integral, and also a hint on how to subdivide the interval of integration in the form of a vector
  \texttt{pts} of sampling points (specified by \eqref{sampling} below).
  This routine was essential for the
  evaluation of the residual at sampling points.

To give an example,
    in our experiments, we used
hierarchical basis functions  $\{\psi^\ell\}_{\ell\in\{0,\ldots,m\}}$
defined by
\beq\label{our_psi}
\psi^0(\sigma):=1-\sigma,\quad \psi^m(\sigma):=\sigma,
\quad
\psi^\ell(\sigma):=\sigma^{\ell}(1-\sigma)\;\;\forall\,\ell\in\{1,\dots,m-1\}.
\eeq
Then
$-\partial\psi^0(\sigma)=\partial\psi^m(\sigma)=1$, and for $\ell\in\{1,\ldots, m-1\}$ from
$\partial\psi^\ell(\sigma)=\ell\sigma^{\ell-1}(1-\sigma)-\sigma^{\ell}$ one easily gets
$$
\partial\psi^\ell(1)=-1,\qquad 0\le \partial\psi^\ell(\sigma)-\partial\psi^\ell(1)=\ell\sigma^{\ell-1}(1-\sigma)-\sigma^{\ell}+1\lesssim 1-\sigma.
$$

  The remaining integral $I_{sing}^{\ell}$ is singular, but can be evaluated analytically as follows.
A~transformation using \eqref{phi_psi} and $t_{loc}:=\frac{t-t_{k-1}}{\tau_k}$ yields
 $(t-s)^{-\alpha}=(t-t_{k-1}-\tau_k\sigma)^{-\alpha}=\tau_k^{-\alpha}(t_{loc}-\sigma)^{-\alpha}$, and then
  \[
    I_{sing}^{\ell}
       = \tau_k^{-\alpha}\int_0^{t_{loc}}\partial \psi^\ell(\sigma)\,(t_{loc}-\sigma)^{-\alpha}\,\dsigma
       =:\tau_k^{-\alpha}\,\hat I_{sing}^{\ell}.
  \]
  We observe, that the evaluation is reduced to finding an integral
  $\hat I_{sing}^{\ell}=\hat I_{sing}^{\ell}(t_{loc})$ that depends only on $t_{loc}$.
  For example, with our choice \eqref{our_psi}, a calculation yields
    \begin{align*}
    \hat I_{sing}^m &= -\hat I_{sing}^0 = 
    t_{loc}^{1-\alpha}\cdot\frac{1}{1-\alpha},\\
    \hat I_{sing}^1 &= 
    t_{loc}^{1-\alpha}\cdot\frac{2 t_{loc}-(2-\alpha)}{(1-\alpha)(2-\alpha)},
  \end{align*}
  while $\hat I_{sing}^\ell$ for other values of $\ell$ can be easily evaluated in a similar way.
  Furthermore, the sampling points for the evaluation of the residuals on each $[t_{k-1},t_k]$
  are typically chosen as $t=t_{k-1}+\tau_k\,t_{loc}$, where $t_{loc}$ takes values from a certain predefined set; hence,
  all $m+1$ integrals $\hat I_{sing}^{\ell}(t_{loc})$ can be pre-computed offline for all $\ell$ and all $t_{loc}$ of interest.

  {For the L1-2 method we obtain by the same reasoning the stable formulation
  \begin{align*}
    D^\alpha_t u_h(t)
       =&  \frac{1}{\Gamma(2-\alpha)}\sum_{j=1}^{k-1}-\frac{U_j-U_{j-1}}{\tau_j}d_{j-1}(t)^{1-\alpha}\expm((1-\alpha)\kappa_j(t))\\
        &+ \frac{1}{\Gamma(2-\alpha)}\sum_{j=2}^{k-1}y_j\tau_j d_{j-1}(t)^{1-\alpha}\expm((1-\alpha)\kappa_j(t))\\
        &+\frac{2}{\Gamma(1-\alpha)}\sum_{j=2}^{k-1}y_j\tau_j^2d_{j-1}(t)^{-\alpha}\int_0^1(\sigma-1)\exp\left(-\alpha\logp\left(-\frac{\tau_j}{t-t_{j-1}}\sigma\right)\right)\dsigma\\
        &+\frac{1}{\Gamma(1-\alpha)}\left((U_k-U_{k-1})\tau_k^{-\alpha}(-\hat I_{sing}^0)+y_k\tau_k^{2-\alpha}\hat I_{sing}^1 \right),
  \end{align*}
  where $\hat I_{sing}^0$ and $\hat I_{sing}^1$ are as above.
  Note that the parts without $y$ are exactly the same as for the L1 method.
  }

 \subsubsection*{Sampling points}
 On each interval, the residuals $R_h$ (and, hence, $D_t^\alpha u_h$) were evaluated using
 \beq\label{sampling}
    t_{loc}\in\left\{\left( \frac{i}{n} \right)^p \right\} \qquad\mbox{for}\;\; i\in\{1,\dots,n-1\},
\eeq
  i.e. this set forms a graded grid on $[0,1]$. In our computations, we set $n+1:=21$
and {heuristically choose} the grading exponent $p:=\min\{\frac{1}{1-\alpha},5\}$, which gives a sufficiently strong
  sampling near the maximal residual values, as we shall now discuss. 

Indeed, for our algorithm to be reliable, it is essential that the set of sampling points reaches
 the maximal value of the residual, while the residual
 itself behaves on each $[t_{k-1},t_k]$ like a left-shifted bubble with a sharp layer near $t_{k-1}$ as $\alpha\rightarrow 1^-$.
 The latter behaviour is easily understood, e.g., in the case of the L1 method, as in the extreme case $\alpha=1$, the residual involves
 a piecewise-constant $D_t^\alpha u_h=\frac{d}{dt}u_h$ and, thus, has discontinuities at each $t_{k-1}^+$.
 With above choice of $p$ and $n$, we observe
  a good distribution of sampling points as shown in Fig.~\ref{fig:sampling}.
    catching the maximum value. {Interestingly, for $\alpha=0.1$ we 
    also observe 
    a sharp layer in the residual bubble, but now only for a few initial time intervals.}
  \begin{figure}[htb]
    \begin{center}
%
%
\begin{tikzpicture}

\begin{axis}[%
width=0.17\textwidth,
height=0.15\textwidth,
scale only axis,
scaled y ticks=true,
xmin=0,
xmax=1.2e-57,
ymin=0,
scaled y ticks=base 10:1,
axis background/.style={fill=white}
]
\addplot [color=blue, mark=o, mark options={solid, blue}, forget plot,thick]
  table[row sep=crcr]{%
7.7371e-62	0\\
6.9579e-60	0.31724\\
2.1922e-59	0.31425\\
4.3014e-59	0.26966\\
6.9429e-59	0.21113\\
1.0067e-58	0.15071\\
1.3639e-58	0.095535\\
1.7632e-58	0.049958\\
2.2025e-58	0.016267\\
2.6801e-58	0.005005\\
3.1944e-58	0.01482\\
3.7443e-58	0.015374\\
4.3285e-58	0.009764\\
4.9461e-58	0.0015831\\
5.5962e-58	0.0055785\\
6.2781e-58	0.0087322\\
6.9909e-58	0.0061949\\
7.7342e-58	0.001597\\
8.5071e-58	0.011159\\
9.3092e-58	0.014909\\
1.014e-57	0\\
};
\end{axis}
\end{tikzpicture}%
%
%
\begin{tikzpicture}

\begin{axis}[%
width=0.17\textwidth,
height=0.15\textwidth,
scale only axis,
xmin=0,
ymin=0,
axis background/.style={fill=white}
]
\addplot [color=blue, mark=o, mark options={solid, blue}, forget plot,thick]
  table[row sep=crcr]{%
2.5026e-13	0\\
2.7127e-13	0.019061\\
3.1698e-13	0.031223\\
3.814e-13	0.034505\\
4.6208e-13	0.031232\\
5.575e-13	0.024332\\
6.666e-13	0.016271\\
7.8856e-13	0.0087862\\
9.2274e-13	0.0029132\\
1.0686e-12	0.00090418\\
1.2257e-12	0.0026832\\
1.3936e-12	0.0027768\\
1.5721e-12	0.0017536\\
1.7607e-12	0.00028206\\
1.9593e-12	0.00098432\\
2.1675e-12	0.0015241\\
2.3853e-12	0.0010687\\
2.6123e-12	0.00027213\\
2.8483e-12	0.0018774\\
3.0933e-12	0.0024761\\
3.3471e-12	0\\
};
\end{axis}
\end{tikzpicture}%
%
%
\begin{tikzpicture}

\begin{axis}[%
width=0.17\textwidth,
height=0.15\textwidth,
scale only axis,
xmin=3e-07,
xmax=2.5e-06,
ymin=0,
axis background/.style={fill=white}
]
\addplot [color=blue, mark=o, mark options={solid, blue}, forget plot,thick]
  table[row sep=crcr]{%
3.6864e-07	0\\
3.8155e-07	0.013323\\
4.0962e-07	0.014203\\
4.4919e-07	0.012664\\
4.9874e-07	0.010031\\
5.5735e-07	0.0071134\\
6.2436e-07	0.0044252\\
6.9927e-07	0.0022523\\
7.8169e-07	0.00070978\\
8.7128e-07	0.00021059\\
9.6776e-07	0.00059988\\
1.0709e-06	0.00059778\\
1.1805e-06	0.00036441\\
1.2964e-06	5.6696e-05\\
1.4183e-06	0.00019172\\
1.5463e-06	0.00028807\\
1.68e-06	0.00019626\\
1.8194e-06	4.8619e-05\\
1.9644e-06	0.00032664\\
2.1149e-06	0.00041994\\
2.2707e-06	0\\
};
\end{axis}
\end{tikzpicture}%
%
%
\begin{tikzpicture}

\begin{axis}[%
width=0.17\textwidth,
height=0.15\textwidth,
scale only axis,
xmin=5e-06,
xmax=4e-05,
ymin=0,
axis background/.style={fill=white}
]
\addplot [color=blue, mark=o, mark options={solid, blue}, forget plot,thick]
  table[row sep=crcr]{%
9.216e-06	0\\
9.4027e-06	0.00060849\\
9.8089e-06	0.00052619\\
1.0381e-05	0.00042102\\
1.1098e-05	0.00031192\\
1.1946e-05	0.00021147\\
1.2916e-05	0.00012737\\
1.3999e-05	6.3264e-05\\
1.5192e-05	1.9554e-05\\
1.6488e-05	5.709e-06\\
1.7884e-05	1.6039e-05\\
1.9376e-05	1.5788e-05\\
2.0962e-05	9.5171e-06\\
2.2638e-05	1.4654e-06\\
2.4403e-05	4.9068e-06\\
2.6253e-05	7.3041e-06\\
2.8188e-05	4.9316e-06\\
3.0205e-05	1.2111e-06\\
3.2303e-05	8.0676e-06\\
3.448e-05	1.0286e-05\\
3.6735e-05	0\\
};
\end{axis}
\end{tikzpicture}%
    \end{center}\vspace{-0.3cm}
    \caption{Distribution of sampling points for a collocation method with $m=4$ inside the second interval for {$\alpha\in\{0.1,\,0.4,\,0.8,\,0.99\}$ (from left to right)}\label{fig:sampling}}
  \end{figure}
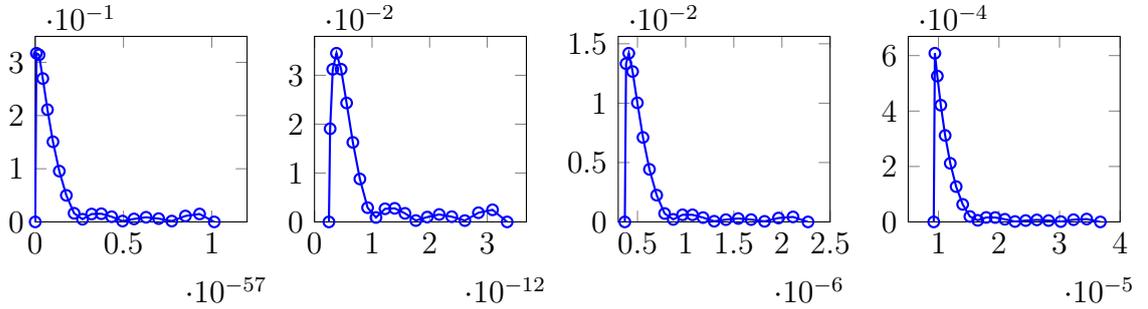
%


 \section{Adaptive algorithm}\label{sec_algorithm}


  \begin{lstlisting}[float,caption=Adaptive Algorithm,  basicstyle=\footnotesize,label=alg:mesh]
m         := 1;
Q         := Q0;
uh(1)     := u0;
mesh(1:2) := [0,tau_init];
while mesh(m)<T
  if (m=2 && Q=Q0)                        % found initial guess for mesh(1)
    Q := Q1;
  else
    m := m+1;
  end
  flag := 0;
  while mesh(m)-mesh(m-1) > tau_min
    uh(m)  := computeSolution(mesh(1:m),uh);
    Res    := computeResidual(uh,mesh);
    ResBarrier := computeResidualBarrier(mesh);
    if all(Res<TOL*ResBarrier)            % residual small enough
      if mesh(m)>=T                       % finished
        break
      else                                % ok
        if flag = 2                       % coming from larger stepsizes
          mesh(m+1) := min(mesh(m)+(mesh(m)-mesh(m-1)),T);
          break;                          % continue with next step
        end
        tmpuh   := uh(m);                 % save data
        tmptm   := mesh(m);
        mesh(m) := min(mesh(m-1)+Q*(mesh(m)-mesh(m-1)),T);
        flag    := 1;                     % try again with larger step
      end
    else                                  % residual not small enough
      if flag = 1                         % previous step was good
        uh(m)     := tmpuh;               % recall saved data
        mesh(m)   := tmptm;
        mesh(m+1) := min(mesh(m)+(mesh(m)-mesh(m-1)),T);
        break;                            % continue with next step
      else                                % previous residual not good
        mesh(m) := mesh(m-1)+(mesh(m)-mesh(m-1))/Q;
        flag    := 2;                     % try again with smaller step
      end
    end
  end
  if mesh(m)-mesh(m-1) < tau_min
    mesh(m)   := min(mesh(m-1)+tau_min,T);
    mesh(m+1) := min(mesh(m-1)+2*tau_min,T);
  end
end
  \end{lstlisting}

  Here we present a version of the adaptive time stepping algorithm from \cite{Kopteva22,Kopt_Stynes_apost22}, in which we made a few subtle improvements
  that substantially reduce the computational time; see Algorithm~\ref{alg:mesh}.

  Our algorithm yields a mesh with mesh nodes $\{t_j\}$, such that $0=t_0<t_1<\dots<t_M=T$ (where a suitable $M$ is chosen by the algorithm)
  and a computed solution on this mesh, such
  that the residual of the computed solution, measured in the $L_p(\Omega)$ norm, does not exceed $TOL\cdot \RR_0(t)$ or $TOL\cdot \RR_1(t)$,
  the residual barriers suggested by \eqref{barrier_bounds}, or, equivalently, by \eqref{barrier_bounds_alg}.

  Similarly to \cite{Kopteva22,Kopt_Stynes_apost22}, the algorithm calls three functions:
  \begin{itemize}
    \item \lstinline|computeSolution(mesh,oldSolution)| that employs a numerical method from Section~\ref{sec_methods} to
    compute the solution $u_h|_{(t_{m-1},t_m]}$ using a given mesh $\{t_j\}_{j=0}^m$
          and  the solution $u_h|_{[0,t_{m-1}]}$ as initial data;

    \item \lstinline|computeResidual(solution,mesh)| that computes the $L_p(\Omega)$-norm of the residual
    (as described in Section~\ref{ssec_residuals})
    at the prescribed set of sampling points in $(t_{m-1},t_m]$ using a given mesh $\{t_j\}_{j=0}^m$
          and  the solution $u_h|_{[0,t_{m}]}$ as initial data; 

    \item \lstinline|computeResidualBarrier(mesh)| that computes the barriers $\RR_0$ or $\RR_1$
     at the prescribed set of sampling points on $(t_{m-1},t_m]$. 
  \end{itemize}

  Furthermore, the following parameters are used
  \begin{itemize}

    \item $TOL$ is a sufficiently small positive number used in the guaranteed estimate for the error of the computed solution.
    \item $\tau_{init}=T/2$ is a very crude initial guess for the first time step $\tau_1$. This will typically be shrunken by the factor $Q_0$
    (see below) in the first few iterations.

    \item $\tau_{min}\ge 0$ is the lower bound on any time step enforced by the algorithm.
    We set it to $0$, but the algorithm allows for any small positive value.

        \item $Q_0>Q_1>1$ are two factors by which the size of the current time step is increased or reduced. Here the larger factor $Q_0$
          is used to find a crude size for the starting time step $\tau_1$,
          while the smaller factor $Q_1\approx 1$ is used to find the final size of $\tau_1$, as well as to compute all other time steps.
          In our experiments we chose $Q_0=5$ and the initial search was done within 10 iterations.
  \end{itemize}

\noindent
  Note that, compared to \cite{Kopteva22,Kopt_Stynes_apost22}, the introduction of the larger factor $Q_0$ (used to compute a crude size of $\tau_1$), as well as \lstinline|flag = 2|,
  significantly reduces the computational times.

  \begin{remark}[$\omega>0$]\label{rem_alg_oemga}
    Note that when the bounds of Corollary~\ref{cor_new} are employed, in view of
    \eqref{barrier_bounds_alg_new} (used instead of \eqref{barrier_bounds_alg}),
    the residual of the computed solution, measured in the $L_\infty(\Omega)$ norm, should not exceed $TOL\cdot \RR_0(t)/(1+\omega)$ or
    $TOL\cdot \RR_1(t)/(1+\omega)$, which requires an obvious minor change in line 15 of the algorithm
    to\\
    \centerline{
    \textnormal{\lstinline|ResBarrier := computeResidualBarrier(mesh)/(1+omega);|}}
  \end{remark}

  \section{Numerical experiments}\label{sec:numerics}
  We are mainly interested in the adaptive time stepping and, therefore, we consider only simple problems in
  the spatial direction. We use a conforming finite element method with piecewise polynomials of a fixed degree
  in space on a sufficiently fine equidistant mesh.
 For simplicity, throughout this Section,  all test problems posed in $(0,T]\times\Omega$ will be of the form
  \[
    D^\alpha_t u-\Delta u=f,
  \]
  subject to homogeneous boundary conditions.
  \medskip

  \example \label{Ex1}
  In order to compare the residual to the actual error we consider a test problem with a given exact solution.
  Here we take $\Omega=(0,1)$ and
  \[
    u(x,t)=(t^\alpha-t^2+1)\,x\,(1-x)
  \]
  that satisfies the homogeneous boundary conditions and the initial condition with $u_0(x)=x(1-x)$, and exhibits a typical weak singularity of
  type $t^\alpha$ near $t=0$.
  Note that the solution is  a quadratic polynomial in space for each $t\geq0$. Therefore, using
  piecewise quadratic elements in space, on a coarse spatial grid of just 10 cells, resolves it exactly and the error obtained is purely due to time
  discretisation.
    \medskip

  \example \label{Ex2}
  In order to investigate the behaviour of the adaptive algorithm and its parameters we
  consider a second test problem with an unknown solution posed in $\Omega=(0,1)$, but
  a given right-hand side
  \[
    f(x,t)=(t^\gamma-t)\cdot\sin((x\pi)^2)+t\cdot\exp(-100\cdot(2t-1)^2)
  \]
  for $\gamma\in[0,\alpha]$. Note, that this function has a very localised Gaussian pulse in addition
  to the weak singularity of type $t^{\gamma+\alpha}$ near $t=0$.
  The solution of this problem for $\alpha=0.4$ is shown in Figure~\ref{fig:sol}
  \begin{figure}[htb]
    \begin{center}
\begin{tikzpicture}
	 \begin{axis}[
		width = 0.4\textwidth,
	 	 grid,
	 	 enlargelimits = false,
	 	 xmin = 0.000000,
	 	 xmax = 1.000000,
	 	 ymin = 0.000000,
	 	 ymax = 1.000000,
	 	 zmin = -0.010000,
	 	 zmax = 0.080000,
	 	 xlabel = {$x$},
	 	 ylabel = {$t$},
	 	 zlabel = {},
	 ]

	 	 \addplot3 graphics[
	 	 	 points={
	 	 	 	 (0.000000,0.000000,-0.010000) => (316.000000,62.000000)
	 	 	 	 (0.000000,1.000000,0.070000) => (89.000000,431.000000)
	 	 	 	 (1.000000,0.000000,0.070000) => (611.000000,404.000000)
	 	 	 	 (1.000000,1.000000,-0.010000) => (384.000000,267.000000)
	 	 }]{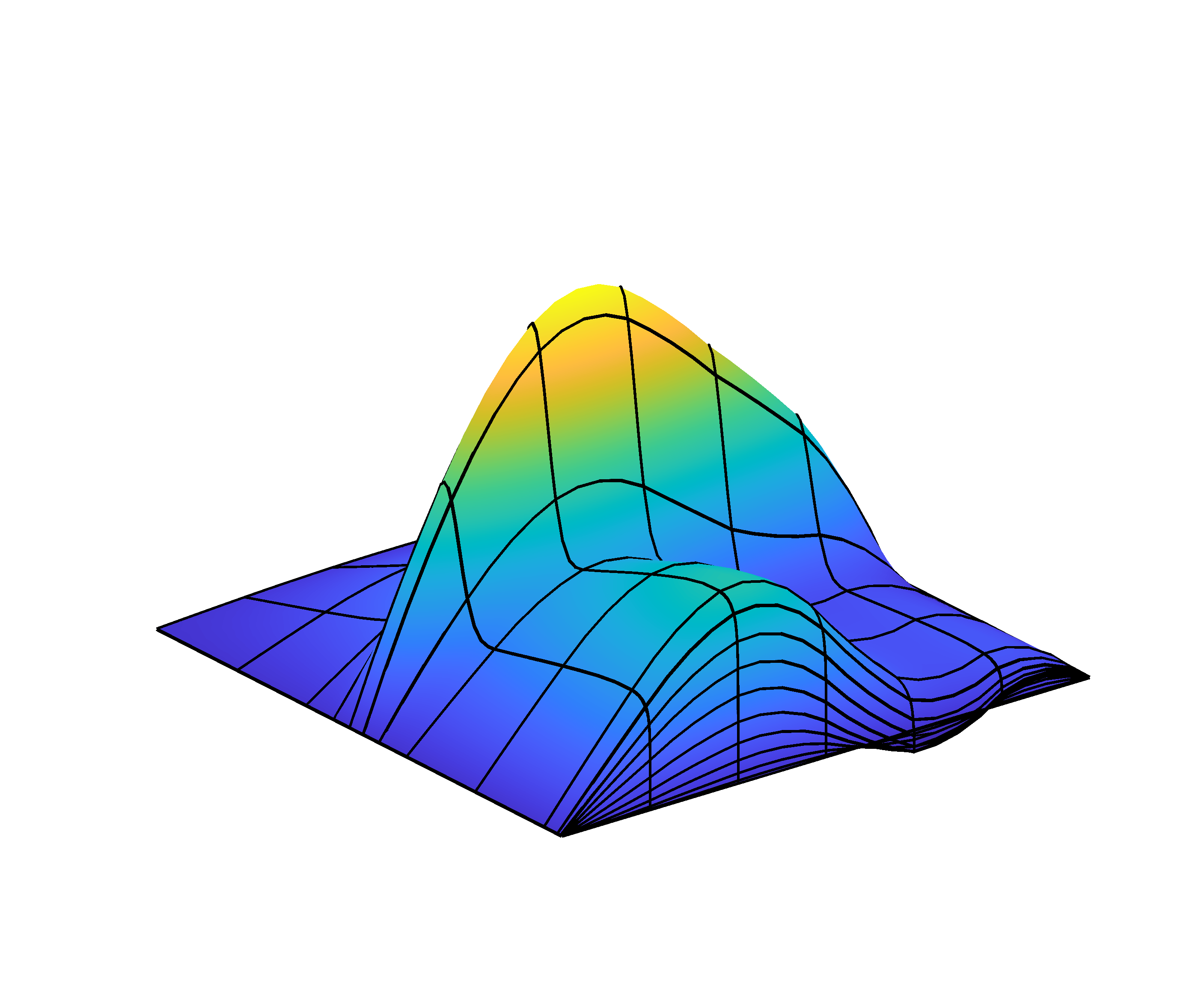};

	 \end{axis}
\end{tikzpicture}
\begin{tikzpicture}
	 \begin{axis}[
		width = 0.4\textwidth,
	 	 grid,
	 	 enlargelimits = false,
	 	 xmin = 0.000000,
	 	 xmax = 1.000000,
	 	 ymin = 0.000000,
	 	 ymax = 1.000000,
	 	 zmin = -0.010000,
	 	 zmax = 0.080000,
	 	 xlabel = {$x$},
	 	 ylabel = {$t$},
	 	 zlabel = {},
	 ]

	 	 \addplot3 graphics[
	 	 	 points={
	 	 	 	 (0.000000,0.000000,-0.000562) => (312.000000,63.000000)
	 	 	 	 (0.000000,1.000000,0.080000) => (88.000000,437.000000)
	 	 	 	 (1.000000,0.000000,0.080000) => (604.000000,410.000000)
	 	 	 	 (1.000000,1.000000,-0.000562) => (380.000000,271.000000)
	 	 }]{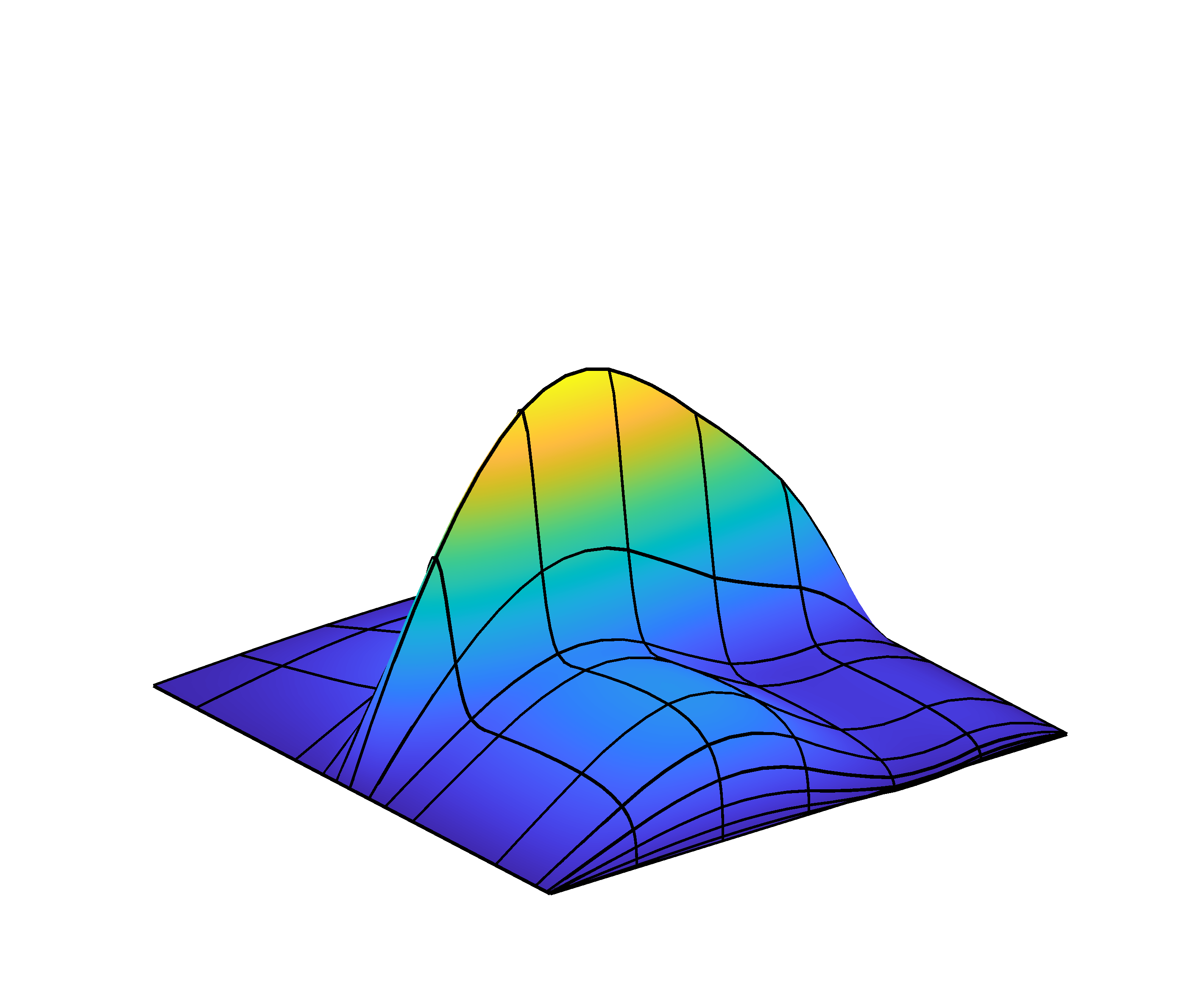};

	 \end{axis}
\end{tikzpicture}
\begin{tikzpicture}
	 \begin{axis}[
		width = 0.4\textwidth,
	 	 grid,
	 	 enlargelimits = false,
	 	 xmin = 0.000000,
	 	 xmax = 1.000000,
	 	 ymin = 0.000000,
	 	 ymax = 1.000000,
	 	 zmin = -0.010000,
	 	 zmax = 0.080000,
	 	 xlabel = {$x$},
	 	 ylabel = {$t$},
	 	 zlabel = {},
	 ]

	 	 \addplot3 graphics[
	 	 	 points={
	 	 	 	 (0.000000,0.000000,-0.010000) => (316.000000,62.000000)
	 	 	 	 (0.000000,1.000000,0.060000) => (89.000000,431.000000)
	 	 	 	 (1.000000,0.000000,0.060000) => (611.000000,404.000000)
	 	 	 	 (1.000000,1.000000,-0.010000) => (384.000000,267.000000)
	 	 }]{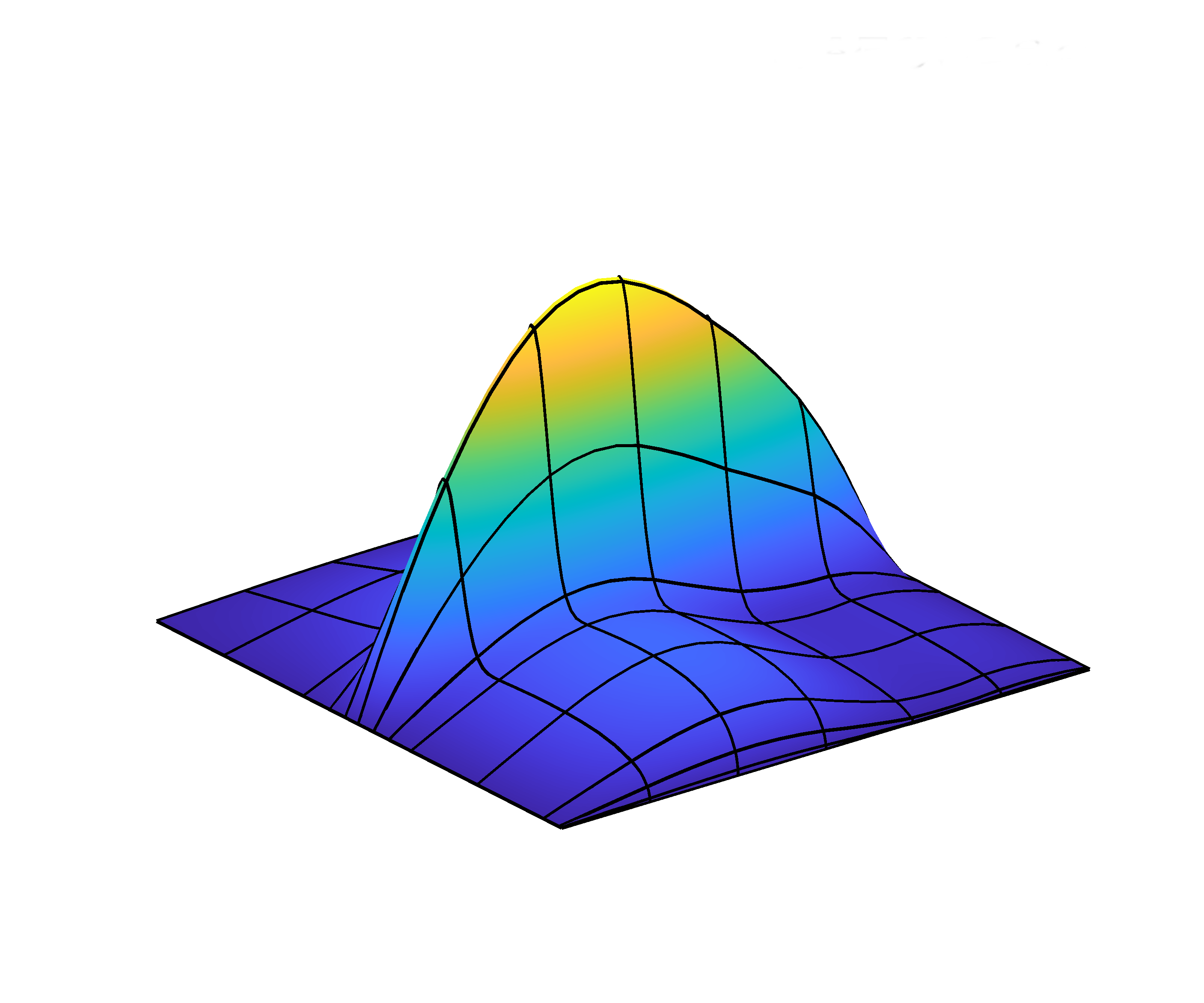};

	 \end{axis}
\end{tikzpicture}
    \end{center}\vspace{-0.3cm}
    \caption{Solution of Example~\ref{Ex2} with $\alpha=0.4$ and $\gamma\in\{0,\,0.2,\,0.4\}$ from left to right\label{fig:sol}}
  \end{figure}
  for $0\leq t\leq 1$. Note that similar problems posed in $\Omega=[0,1]^d$ for $d\in\{2,3\}$ were also investigated with similar results.
  For the spatial discretisation we use the same method as before and apply piecewise quadratic elements on a  spatial grid of
   10 cells.

  \subsection{Experiments on the reliability and convergence rates with Example~\ref{Ex1}}\label{ssec_numerics}

  Let us start by investigating the reliability of our error estimator using Example~\ref{Ex1}, for which the exact solution is available. For the mesh adaptation process we  use
  $Q_1=1.2$ unless specified otherwise.
  Recall that for this problem, in view of Lemma~\ref{lem_Linfty_new} combined with
  Remark~\ref{rem_lem_Linfty_new1}(ii), one can employ $\omega=\frac18\lambda$ for any $\lambda\ge 0$
  when measuring the error in the $L_\infty(\Omega)$ norm (see also Remark~\ref{rem_alg_oemga} for this case).
  When the error is measured  in the $L_2(\Omega)$ norm, the principal eigenvalue $\lambda=\pi^2$ of the operator $\LL=-\partial_x^2$ on $\Omega=(0,1)$ satisfies
  Theorem~\ref{the_L2}, so will be used in $\RR_0$ and $\RR_1$.

  Figures \ref{fig:R0LinfLinf_04}--\ref{fig:R1Linf_04} show loglog graphs of various values of the algorithm's $TOL$ and the corresponding actual errors vs. the corresponding numbers of time steps $M$ (recall that $M$ is automatically chosen by the algorithm for a prescribed value of $TOL$).
  The L1 method,  the L1-2 method, and a few collocation methods up to order $m=8$ are considered.
  Given a method of order $q$, we expect convergence rates of order $q-\alpha$ (see Remark~\ref{rem_apriori}).
  Hence, we also show the slopes for $M^{-(2-\alpha)}$ and $M^{-(5-\alpha)}$, which are, respectively, expected to have a good agreement with the error curves for
  the L1 method and the collocation method of order $m=4$. (The slope for $m=8$ is not given, as the prescribed tolerance $TOL$ is attained while
  we remain in a preasymptotic regime, with very few time steps $M$.)

  We start with the residual barrier  $\RR_0$, with $\lambda=\pi^2$ and $\omega=\lambda/8$,
   and measure the error in the $L_\infty(0,T;\, L_\infty(\Omega))$ norm;
  see Figure~\ref{fig:R0LinfLinf_04}.
  \begin{figure}[htb]
    \begin{center}
%
%
\begin{tikzpicture}

\begin{axis}[%
width=0.6\textwidth,
height=0.28\textwidth,
scale only axis,
xmode=log,
xmin=1,
xmax=8000,
xminorticks=true,
xlabel style={font=\color{white!15!black}},
xlabel={$M$},
ymode=log,
ymin=1e-09,
ymax=0.02,
yminorticks=true,
axis background/.style={fill=white},
legend columns=1, 
legend style={at={(1.03,0.5)}, anchor=west, legend cell align=left, align=left, draw=white!15!black}
]
\addplot [color=blue, mark=*, mark options={solid, blue}]
  table[row sep=crcr]{%
10	0.01\\
19	0.0031623\\
37	0.001\\
72	0.00031623\\
142	0.0001\\
280	3.1623e-05\\
557	1e-05\\
1119	3.1623e-06\\
2248	1e-06\\
};
\addlegendentry{L1, TOL}
\addplot [color=blue,thick]
  table[row sep=crcr]{%
10	0.0048259\\
19	0.0014227\\
37	0.00043905\\
72	0.00013622\\
142	4.1712e-05\\
280	1.2821e-05\\
557	3.9117e-06\\
1119	1.1969e-06\\
2248	3.7773e-07\\
};
\addlegendentry{error}

\addplot [color=purple, mark=*, mark options={solid, purple}]
  table[row sep=crcr]{%
9	0.01\\
14	0.0031623\\
23	0.001\\
35	0.00031623\\
55	0.0001\\
84	3.1623e-05\\
130	1e-05\\
199	3.1623e-06\\
307	1e-06\\
475	3.1623e-07\\
734	1e-07\\
};
\addlegendentry{L1-2, TOL}
\addplot [color=purple,thick]
  table[row sep=crcr]{%
9	0.0046293\\
14	0.0014426\\
23	0.00041466\\
35	0.00013931\\
55	4.7361e-05\\
84	1.4227e-05\\
130	4.4404e-06\\
199	1.4006e-06\\
307	4.352e-07\\
475	1.3093e-07\\
734	4.0995e-08\\
};
\addlegendentry{error}

\addplot [color=black, mark=*, mark options={solid, black}]
  table[row sep=crcr]{%
4	0.01\\
7	0.0031623\\
11	0.001\\
17	0.00031623\\
26	0.0001\\
40	3.1623e-05\\
63	1e-05\\
97	3.1623e-06\\
151	1e-06\\
234	3.1623e-07\\
365	1e-07\\
568	3.1623e-08\\
881	1e-08\\
};
\addlegendentry{coll(2), TOL}
\addplot [color=black,thick]
  table[row sep=crcr]{%
4	0.0041742\\
7	0.0014139\\
11	0.00045195\\
17	0.00013164\\
26	3.8771e-05\\
40	1.2383e-05\\
63	3.9541e-06\\
97	1.1737e-06\\
151	3.7475e-07\\
234	1.1815e-07\\
365	3.7724e-08\\
568	1.2044e-08\\
881	3.8455e-09\\
};
\addlegendentry{error}
\addplot [color=teal, mark=*, mark options={solid, teal}]
  table[row sep=crcr]{%
3	0.01\\
4	0.0031623\\
6	0.001\\
9	0.00031623\\
12	0.0001\\
16	3.1623e-05\\
21	1e-05\\
29	3.1623e-06\\
38	1e-06\\
49	3.1623e-07\\
64	1e-07\\
84	3.1623e-08\\
107	1e-08\\
};
\addlegendentry{coll(4), TOL}
\addplot [color=teal,thick]
  table[row sep=crcr]{%
3	0.0041239\\
4	0.0012828\\
6	0.00038772\\
9	0.00012446\\
12	3.9804e-05\\
16	1.1821e-05\\
21	4.0097e-06\\
29	1.223e-06\\
38	3.8003e-07\\
49	1.2133e-07\\
64	3.874e-08\\
84	1.2214e-08\\
107	3.8997e-09\\
};
\addlegendentry{error}
\addplot [color=magenta, mark=*, mark options={solid, magenta}]
  table[row sep=crcr]{%
2	0.01\\
3	0.0031623\\
5	0.001\\
6	0.00031623\\
8	0.0001\\
10	3.1623e-05\\
12	1e-05\\
15	3.1623e-06\\
18	1e-06\\
21	3.1623e-07\\
25	1e-07\\
30	3.1623e-08\\
35	1e-08\\
};
\addlegendentry{coll(8), TOL}
\addplot [color=magenta,thick]
  table[row sep=crcr]{%
2	0.0034436\\
3	0.00098743\\
5	0.00031753\\
6	0.00010165\\
8	3.3698e-05\\
10	9.694e-06\\
12	3.0965e-06\\
15	9.8878e-07\\
18	3.1176e-07\\
21	9.954e-08\\
25	3.1781e-08\\
30	1.0147e-08\\
35	3.1992e-09\\
};
\addlegendentry{error}

\addplot [color=black]
  table[row sep=crcr]{%
4e1   5e-03\\
2e2   5e-03\\
2e2   4e-04\\
4e1   5e-03\\
};
\node[color=black,right] at (2e2,2e-3) {$M^{-(2-\alpha)}$};
\addplot [color=black]
  table[row sep=crcr]{%
5e0   5e-06\\
5e0   3e-08\\
15e0   3e-08\\
5e0   5e-06\\
};
\node[color=black,right] at (1e0,3e-7) {$M^{-(5-\alpha)}$};
\end{axis}

\end{tikzpicture}
    \end{center}\vspace{-0.6cm}
    \caption{$L_\infty(0,T;\, L_\infty(\Omega))$ errors for various methods vs. number of time steps $M$ for Example~\ref{Ex1}, $\alpha=0.4$,
             residual barrier $\RR_0$ with $\lambda=\pi^2$ and $\omega=\lambda/8$\label{fig:R0LinfLinf_04}}
  \end{figure}
  We clearly observe a tight bounding of the errors by the prescribed values of $TOL$ and therefore a good demonstration of the reliability
  of the estimator. Furthermore, we  observe convergence orders of almost $\ord{M^{-(q-\alpha)}}$, where
  $q=2$ for the L1 method, $q=3$ for the L1-2 method, and $q=m+1$, where
  $m$ is the polynomial degree
  used in the definition of the collocation methods in Section~\ref{ssec_collocation}.  Note that
  there are no theoretical a-priori error estimates in the literature for the considered collocation methods in the context of our problem~\eqref{problem}, while our adaptive algorithm yields reliable computed solutions with optimal convergence rates.

  Figure~\ref{fig:R0LinfLinf_04_0}
  \begin{figure}[htb]
    \begin{center}
%
%
\begin{tikzpicture}

\begin{axis}[%
width=0.6\textwidth,
height=0.28\textwidth,
scale only axis,
xmode=log,
xmin=1,
xmax=8000,
xminorticks=true,
xlabel style={font=\color{white!15!black}},
xlabel={$M$},
ymode=log,
ymin=1e-09,
ymax=0.02,
yminorticks=true,
axis background/.style={fill=white},
legend columns=1, 
legend style={at={(1.03,0.5)}, anchor=west, legend cell align=left, align=left, draw=white!15!black}
]
\addplot [color=blue, mark=*, mark options={solid, blue}]
  table[row sep=crcr]{%
18	0.01\\
34	0.0031623\\
63	0.001\\
122	0.00031623\\
240	0.0001\\
474	3.1623e-05\\
951	1e-05\\
1891	3.1623e-06\\
3848	1e-06\\
};
\addlegendentry{L1, TOL}
\addplot [color=blue,thick]
  table[row sep=crcr]{%
18	0.0059732\\
34	0.0022801\\
63	0.00078689\\
122	0.00027198\\
240	8.7036e-05\\
474	2.7809e-05\\
951	8.7699e-06\\
1891	2.6054e-06\\
3848	8.3196e-07\\
};
\addlegendentry{error}

\addplot [color=purple, mark=*, mark options={solid, purple}]
  table[row sep=crcr]{%
11	0.01\\
17	0.0031623\\
27	0.001\\
41	0.00031623\\
62	0.0001\\
96	3.1623e-05\\
147	1e-05\\
226	3.1623e-06\\
348	1e-06\\
534	3.1623e-07\\
830	1e-07\\
};
\addlegendentry{L1-2, TOL}
\addplot [color=purple,thick]
  table[row sep=crcr]{%
11	0.0059732\\
17	0.0022801\\
27	0.00078689\\
41	0.00027198\\
62	8.7036e-05\\
96	2.7809e-05\\
147	8.7699e-06\\
226	2.6033e-06\\
348	8.3119e-07\\
534	2.6538e-07\\
830	8.4731e-08\\
};
\addlegendentry{error}

\addplot [color=black, mark=*, mark options={solid, black}]
  table[row sep=crcr]{%
5	0.01\\
8	0.0031623\\
12	0.001\\
19	0.00031623\\
29	0.0001\\
46	3.1623e-05\\
71	1e-05\\
110	3.1623e-06\\
170	1e-06\\
265	3.1623e-07\\
410	1e-07\\
636	3.1623e-08\\
987	1e-08\\
};
\addlegendentry{coll(2), TOL}
\addplot [color=black,thick]
  table[row sep=crcr]{%
5	0.0059038\\
8	0.0023943\\
12	0.0008389\\
19	0.00026662\\
29	8.5341e-05\\
46	2.7269e-05\\
71	8.7088e-06\\
110	2.7807e-06\\
170	8.7675e-07\\
265	2.7993e-07\\
410	8.309e-08\\
636	2.6529e-08\\
987	8.47e-09\\
};
\addlegendentry{error}
\addplot [color=teal, mark=*, mark options={solid, teal}]
  table[row sep=crcr]{%
3	0.01\\
5	0.0031623\\
6	0.001\\
9	0.00031623\\
12	0.0001\\
16	3.1623e-05\\
21	1e-05\\
28	3.1623e-06\\
37	1e-06\\
49	3.1623e-07\\
63	1e-07\\
82	3.1623e-08\\
107	1e-08\\
};
\addlegendentry{coll(4), TOL}
\addplot [color=teal,thick]
  table[row sep=crcr]{%
3	0.0058193\\
5	0.0023925\\
6	0.00084602\\
9	0.0002733\\
12	8.7588e-05\\
16	2.7649e-05\\
21	8.8309e-06\\
28	2.8199e-06\\
37	8.3703e-07\\
49	2.6725e-07\\
63	8.426e-08\\
82	2.6902e-08\\
107	8.5894e-09\\
};
\addlegendentry{error}
\addplot [color=magenta, mark=*, mark options={solid, magenta}]
  table[row sep=crcr]{%
2	0.01\\
3	0.0031623\\
4	0.001\\
6	0.00031623\\
7	0.0001\\
9	3.1623e-05\\
11	1e-05\\
14	3.1623e-06\\
17	1e-06\\
20	3.1623e-07\\
24	1e-07\\
29	3.1623e-08\\
34	1e-08\\
};
\addlegendentry{coll(8), TOL}
\addplot [color=magenta,thick]
  table[row sep=crcr]{%
2	0.0040461\\
3	0.0018264\\
4	0.00062766\\
6	0.00022201\\
7	7.1632e-05\\
9	2.1335e-05\\
11	6.8185e-06\\
14	2.1505e-06\\
17	6.8666e-07\\
20	2.1924e-07\\
24	7e-08\\
29	2.207e-08\\
34	7.0465e-09\\
};
\addlegendentry{error}

\addplot [color=black]
  table[row sep=crcr]{%
4e1   5e-03\\
2e2   5e-03\\
2e2   4e-04\\
4e1   5e-03\\
};
\node[color=black,right] at (2e2,2e-3) {$M^{-(2-\alpha)}$};
\addplot [color=black]
  table[row sep=crcr]{%
5e0   5e-06\\
5e0   3e-08\\
15e0  3e-08\\
5e0   5e-06\\
};
\node[color=black,right] at (1e0,3e-7) {$M^{-(5-\alpha)}$};
\end{axis}

\end{tikzpicture}
    \end{center}\vspace{-0.6cm}
    \caption{$L_\infty(0,T;\, L_\infty(\Omega))$ errors for various methods vs. number of time steps $M$ for Example~\ref{Ex1}, $\alpha=0.4$,
             residual barrier $\RR_0$ with $\lambda=\omega=0$\label{fig:R0LinfLinf_04_0}}
  \end{figure}
  shows the results for the same problem, but now we choose $\lambda=0$ and $\omega=0$. We observe a tighter fit
  of the error to the corresponding $TOL$, but at the same time for the lower-order methods more time steps were required by the algorithm in comparison to the previous
  choice of $\lambda$ and $\omega$.

  Changing the spatial norm to $L_2(\Omega)$, we observe a similar behaviour in the adaptivity and convergence; see Figure~\ref{fig:R0LinfL2_04}.
  \begin{figure}[htb]
    \begin{center}
%
%
\begin{tikzpicture}

\begin{axis}[%
width=0.6\textwidth,
height=0.28\textwidth,
scale only axis,
xmode=log,
xmin=1,
xmax=8000,
xminorticks=true,
xlabel style={font=\color{white!15!black}},
xlabel={$M$},
ymode=log,
ymin=1e-10,
ymax=0.1,
yminorticks=true,
axis background/.style={fill=white},
legend columns=1, 
legend style={at={(1.03,0.5)}, anchor=west, legend cell align=left, align=left, draw=white!15!black}
]
\addplot [color=blue, mark=*, mark options={solid, blue}]
  table[row sep=crcr]{%
25	0.01\\
49	0.0031623\\
94	0.001\\
183	0.00031623\\
360	0.0001\\
716	3.1623e-05\\
1430	1e-05\\
2873	3.1623e-06\\
5792	1e-06\\
};
\addlegendentry{L1, TOL}
\addplot [color=blue,thick]
  table[row sep=crcr]{%
25	0.00061665\\
49	0.00018457\\
94	5.9082e-05\\
183	1.8879e-05\\
360	6.0294e-06\\
716	1.9012e-06\\
1430	6.0702e-07\\
2873	1.9381e-07\\
5792	6.1879e-08\\
};
\addlegendentry{error}

\addplot [color=purple, mark=*, mark options={solid, purple}]
  table[row sep=crcr]{%
17	0.01\\
27	0.0031623\\
42	0.001\\
65	0.00031623\\
99	0.0001\\
152	3.1623e-05\\
233	1e-05\\
361	3.1623e-06\\
554	1e-06\\
860	3.1623e-07\\
};
\addlegendentry{L1-2, TOL}
\addplot [color=purple,thick]
  table[row sep=crcr]{%
17	0.00061987\\
27	0.00019643\\
42	6.1435e-05\\
65	1.9121e-05\\
99	6.0294e-06\\
152	1.9012e-06\\
233	6.0702e-07\\
361	1.9381e-07\\
554	6.1879e-08\\
860	1.951e-08\\
};
\addlegendentry{error}

\addplot [color=black, mark=*, mark options={solid, black}]
  table[row sep=crcr]{%
8	0.01\\
12	0.0031623\\
20	0.001\\
31	0.00031623\\
48	0.0001\\
74	3.1623e-05\\
114	1e-05\\
177	3.1623e-06\\
275	1e-06\\
426	3.1623e-07\\
662	1e-07\\
1028	3.1623e-08\\
1594	1e-08\\
};
\addlegendentry{coll(2), TOL}
\addplot [color=black,thick]
  table[row sep=crcr]{%
8	0.00061154\\
12	0.00019456\\
20	6.2307e-05\\
31	1.8513e-05\\
48	5.9125e-06\\
74	1.8879e-06\\
114	5.9525e-07\\
177	1.9005e-07\\
275	6.068e-08\\
426	1.9374e-08\\
662	6.1856e-09\\
1028	1.9502e-09\\
1594	6.2267e-10\\
};
\addlegendentry{error}
\addplot [color=teal, mark=*, mark options={solid, teal}]
  table[row sep=crcr]{%
5	0.01\\
7	0.0031623\\
10	0.001\\
13	0.00031623\\
18	0.0001\\
24	3.1623e-05\\
31	1e-05\\
41	3.1623e-06\\
54	1e-06\\
70	3.1623e-07\\
91	1e-07\\
117	3.1623e-08\\
152	1e-08\\
};
\addlegendentry{coll(4), TOL}
\addplot [color=teal,thick]
  table[row sep=crcr]{%
5	0.00061628\\
7	0.00019936\\
10	5.9451e-05\\
13	1.877e-05\\
18	5.9954e-06\\
24	1.9145e-06\\
31	6.1128e-07\\
41	1.9517e-07\\
54	6.1535e-08\\
70	1.9647e-08\\
91	6.2728e-09\\
117	2.0028e-09\\
152	6.3144e-10\\
};
\addlegendentry{error}
\addplot [color=magenta, mark=*, mark options={solid, magenta}]
  table[row sep=crcr]{%
4	0.01\\
5	0.0031623\\
7	0.001\\
8	0.00031623\\
10	0.0001\\
13	3.1623e-05\\
16	1e-05\\
19	3.1623e-06\\
23	1e-06\\
27	3.1623e-07\\
32	1e-07\\
38	3.1623e-08\\
44	1e-08\\
};
\addlegendentry{coll(8), TOL}
\addplot [color=magenta,thick]
  table[row sep=crcr]{%
4	0.0004882\\
5	0.00015045\\
7	4.982e-05\\
8	1.5575e-05\\
10	4.979e-06\\
13	1.5704e-06\\
16	5.0146e-07\\
19	1.6011e-07\\
23	5.1121e-08\\
27	1.6118e-08\\
32	4.7841e-09\\
38	1.5275e-09\\
44	4.8769e-10\\
};
\addlegendentry{error}

\addplot [color=black]
  table[row sep=crcr]{%
6e1   5e-03\\
3e2   5e-03\\
3e2   4e-04\\
6e1   5e-03\\
};
\node[color=black,right] at (3e2,2e-3) {$M^{-(2-\alpha)}$};
\addplot [color=black]
  table[row sep=crcr]{%
5e0   5e-07\\
5e0   3e-09\\
15e0   3e-09\\
5e0   5e-07\\
};
\node[color=black,right] at (1e0,3e-8) {$M^{-(5-\alpha)}$};

\end{axis}

\end{tikzpicture}
    \end{center}\vspace{-0.6cm}
    \caption{$L_\infty(0,T;\, L_2(\Omega))$ errors for various methods vs. number of time steps $M$ for Example~\ref{Ex1}, $\alpha=0.4$,
             residual barrier $\RR_0$ with $\lambda=\pi^2$
             \label{fig:R0LinfL2_04}}
  \end{figure}

  {
  For smaller values of $\alpha$ the singularity at $t=0$ is stronger
   (assuming it is of type $t^\alpha$, as discussed in Remark~\ref{rem_apriori}),
   so we observe a stronger initial mesh refinement, as is also illustrated by Figure~\ref{fig:sampling}. 
  At the same time,  Figure~\ref{fig:R0LinfLinf_01}}
  \begin{figure}[htb]
    \begin{center}
%
%
\begin{tikzpicture}

\begin{axis}[%
width=0.6\textwidth,
height=0.28\textwidth,
scale only axis,
xmode=log,
xmin=4,
xmax=4000,
xminorticks=true,
xlabel style={font=\color{white!15!black}},
xlabel={$M$},
ymode=log,
ymin=1e-10,
ymax=0.1,
yminorticks=true,
axis background/.style={fill=white},
legend columns=1, 
legend style={at={(1.03,0.5)}, anchor=west, legend cell align=left, align=left, draw=white!15!black}
]
\addplot [color=blue, mark=*, mark options={solid, blue}]
  table[row sep=crcr]{%
16	0.01\\
31	0.0031623\\
58	0.001\\
107	0.00031623\\
195	0.0001\\
356	3.1623e-05\\
641	1e-05\\
1172	3.1623e-06\\
2105	1e-06\\
};
\addlegendentry{L1, TOL}
\addplot [color=blue,thick]
  table[row sep=crcr]{%
16	0.0052386\\
31	0.0016331\\
58	0.00052672\\
107	0.00016078\\
195	5.1956e-05\\
356	1.6308e-05\\
641	5.1416e-06\\
1172	1.615e-06\\
2105	5.0877e-07\\
};
\addlegendentry{error}

\addplot [color=purple, mark=*, mark options={solid, purple}]
  table[row sep=crcr]{%
25	0.01\\
39	0.0031623\\
60	0.001\\
89	0.00031623\\
133	0.0001\\
197	3.1623e-05\\
291	1e-05\\
428	3.1623e-06\\
629	1e-06\\
};
\addlegendentry{L1-2, TOL}
\addplot [color=purple,thick]
  table[row sep=crcr]{%
25	0.0052311\\
39	0.0016477\\
60	0.00052044\\
89	0.00016427\\
133	5.162e-05\\
197	1.6561e-05\\
291	5.2113e-06\\
428	1.6245e-06\\
629	5.1908e-07\\
};
\addlegendentry{error}

\addplot [color=black, mark=*, mark options={solid, black}]
  table[row sep=crcr]{%
10	0.01\\
16	0.0031623\\
26	0.001\\
40	0.00031623\\
62	0.0001\\
93	3.1623e-05\\
140	1e-05\\
209	3.1623e-06\\
310	1e-06\\
461	3.1623e-07\\
684	1e-07\\
1015	3.1623e-08\\
1504	1e-08\\
};
\addlegendentry{coll(2), TOL}
\addplot [color=black,thick]
  table[row sep=crcr]{%
10	0.005078\\
16	0.0016285\\
26	0.00048087\\
40	0.00015818\\
62	5.1112e-05\\
93	1.5974e-05\\
140	5.0889e-06\\
209	1.6113e-06\\
310	5.0171e-07\\
461	1.5743e-07\\
684	4.9743e-08\\
1015	1.5531e-08\\
1504	4.8861e-09\\
};
\addlegendentry{error}
\addplot [color=teal, mark=*, mark options={solid, teal}]
  table[row sep=crcr]{%
8	0.01\\
12	0.0031623\\
18	0.001\\
25	0.00031623\\
34	0.0001\\
46	3.1623e-05\\
61	1e-05\\
79	3.1623e-06\\
103	1e-06\\
134	3.1623e-07\\
171	1e-07\\
219	3.1623e-08\\
281	1e-08\\
};
\addlegendentry{coll(4), TOL}
\addplot [color=teal,thick]
  table[row sep=crcr]{%
8	0.0053311\\
12	0.0016122\\
18	0.00045742\\
25	0.00015382\\
34	4.6554e-05\\
46	1.5728e-05\\
61	4.828e-06\\
79	1.5956e-06\\
103	4.9743e-07\\
134	1.4782e-07\\
171	4.917e-08\\
219	1.5837e-08\\
281	4.8987e-09\\
};
\addlegendentry{error}
\addplot [color=magenta, mark=*, mark options={solid, magenta}]
  table[row sep=crcr]{%
7	0.01\\
10	0.0031623\\
13	0.001\\
18	0.00031623\\
24	0.0001\\
30	3.1623e-05\\
37	1e-05\\
46	3.1623e-06\\
55	1e-06\\
67	3.1623e-07\\
80	1e-07\\
94	3.1623e-08\\
111	1e-08\\
};
\addlegendentry{coll(8), TOL}
\addplot [color=magenta,thick]
  table[row sep=crcr]{%
7	0.0033088\\
10	0.0011961\\
13	0.00041123\\
18	0.00012738\\
24	4.0721e-05\\
30	1.4477e-05\\
37	4.8338e-06\\
46	1.5169e-06\\
55	4.6942e-07\\
67	1.3943e-07\\
80	3.9695e-08\\
94	1.4937e-08\\
111	4.6063e-09\\
};
\addlegendentry{error}

\addplot [color=black]
  table[row sep=crcr]{%
8e1   5e-03\\
4e2   5e-03\\
4e2   2e-04\\
8e1   5e-03\\
};
\node[color=black,right] at (4e2,2e-3) {$M^{-(2-\alpha)}$};
\addplot [color=black]
  table[row sep=crcr]{%
1.5e1   5e-06\\
1.5e1   2e-08\\
6e1     2e-08\\
1.5e1   5e-06\\
};
\node[color=black,right] at (4e0,3e-7) {$M^{-(5-\alpha)}$};

\end{axis}

\end{tikzpicture}
    \end{center}\vspace{-0.6cm}
    \caption{$L_\infty(0,T;\, L_\infty(\Omega))$ errors for various methods vs. number of time steps $M$ for Example~\ref{Ex1}, $\alpha=0.1$,
             residual barrier $\RR_0$ with $\lambda=\pi^2$ and $\omega=\lambda/8$\label{fig:R0LinfLinf_01}}
  \end{figure}
  {
  shows  that the mesh adaptation process works similarly well for  $\alpha=0.1$.
  But for really small $\alpha$ we run into numerical issues. For example, $\alpha=0.01$ and $TOL=10^{-3}$ for the collocation method with $m=4$
  yield the first time step $\tau_1\approx 1.2\cdot 10^{-270}$
    (which is consistent with $\tau_1=M^{-r}$ for the optimal graded mesh with $r=(m+1-\alpha)/\alpha$;
  see, e.g., \cite{Kopteva_Meng}). 
  For smaller values of $TOL$, higher $m$, and/or smaller $\alpha$,
  the size of $\tau_1$ becomes numerically zero, as the smallest positive number in MatLab is $2^{-1074}\approx 5\cdot 10^{-324}$.
  In other words, for very small $\alpha$, double precision is no longer sufficient to represent the first time step.}

  For higher values of $\alpha$ the singularity at $t=0$ is weaker, but the residuals become more singular (see Figure~\ref{fig:sampling}).
  Figure~\ref{fig:R0LinfLinf_08}
  \begin{figure}[htb]
    \begin{center}
%
%
\begin{tikzpicture}

\begin{axis}[%
width=0.6\textwidth,
height=0.28\textwidth,
scale only axis,
xmode=log,
xmin=0.8,
xmax=20000,
xminorticks=true,
xlabel style={font=\color{white!15!black}},
xlabel={$M$},
ymode=log,
ymin=1e-10,
ymax=0.1,
yminorticks=true,
axis background/.style={fill=white},
legend columns=1, 
legend style={at={(1.03,0.5)}, anchor=west, legend cell align=left, align=left, draw=white!15!black}
]
\addplot [color=blue, mark=*, mark options={solid, blue}]
  table[row sep=crcr]{%
9	0.01\\
21	0.0031623\\
53	0.001\\
138	0.00031623\\
364	0.0001\\
935	3.1623e-05\\
2509	1e-05\\
6385	3.1623e-06\\
17049	1e-06\\
};
\addlegendentry{L1, TOL}
\addplot [color=blue,thick]
  table[row sep=crcr]{%
9	0.0037816\\
21	0.0010996\\
53	0.00031532\\
138	9.6462e-05\\
364	2.8652e-05\\
935	9.0888e-06\\
2509	2.7847e-06\\
6385	8.9231e-07\\
17049	2.7834e-07\\
};
\addlegendentry{error}

\addplot [color=purple, mark=*, mark options={solid, purple}]
  table[row sep=crcr]{%
4	0.01\\
7	0.0031623\\
13	0.001\\
21	0.00031623\\
37	0.0001\\
62	3.1623e-05\\
105	1e-05\\
177	3.1623e-06\\
295	1e-06\\
};
\addlegendentry{L1-2, TOL}
\addplot [color=purple,thick]
  table[row sep=crcr]{%
4	0.0041692\\
7	0.001193\\
13	0.00040555\\
21	0.00013837\\
37	3.8056e-05\\
62	1.1618e-05\\
105	3.465e-06\\
177	1.0491e-06\\
295	3.3632e-07\\
};
\addlegendentry{error}

\addplot [color=black, mark=*, mark options={solid, black}]
  table[row sep=crcr]{%
2	0.01\\
3	0.0031623\\
6	0.001\\
11	0.00031623\\
19	0.0001\\
34	3.1623e-05\\
57	1e-05\\
99	3.1623e-06\\
171	1e-06\\
290	3.1623e-07\\
494	1e-07\\
837	3.1623e-08\\
1417	1e-08\\
};
\addlegendentry{coll(2), TOL}
\addplot [color=black,thick]
  table[row sep=crcr]{%
2	0.0033056\\
3	0.0011377\\
6	0.0002527\\
11	7.6388e-05\\
19	2.5088e-05\\
34	6.2786e-06\\
57	1.9608e-06\\
99	6.2619e-07\\
171	1.9994e-07\\
290	6.384e-08\\
494	2.0383e-08\\
837	6.3461e-09\\
1417	2.0262e-09\\
};
\addlegendentry{error}
\addplot [color=teal, mark=*, mark options={solid, teal}]
  table[row sep=crcr]{%
2	0.0031623\\
3	0.001\\
5	0.00031623\\
7	0.0001\\
10	3.1623e-05\\
14	1e-05\\
19	3.1623e-06\\
26	1e-06\\
35	3.1623e-07\\
49	1e-07\\
64	3.1623e-08\\
85	1e-08\\
};
\addlegendentry{coll(4), TOL}
\addplot [color=teal,thick]
  table[row sep=crcr]{%
2	0.00079041\\
3	0.00030739\\
5	6.1923e-05\\
7	1.9932e-05\\
10	6.2218e-06\\
14	1.9886e-06\\
19	6.3514e-07\\
26	2.0281e-07\\
35	6.4755e-08\\
49	2.0675e-08\\
64	6.6011e-09\\
85	1.8216e-09\\
};
\addlegendentry{error}
\addplot [color=magenta, mark=*, mark options={solid, magenta}]
  table[row sep=crcr]{%
1	0.01\\
2	0.0031623\\
3	0.00031623\\
4	0.0001\\
6	3.1623e-05\\
7	1e-05\\
9	3.1623e-06\\
10	1e-06\\
13	3.1623e-07\\
16	1e-07\\
19	3.1623e-08\\
22	1e-08\\
};
\addlegendentry{coll(8), TOL}
\addplot [color=magenta,thick]
  table[row sep=crcr]{%
1	0.0016723\\
2	0.00071136\\
3	6.7195e-05\\
4	1.877e-05\\
6	6.0116e-06\\
7	1.9213e-06\\
9	6.1362e-07\\
10	1.9594e-07\\
13	6.1005e-08\\
16	1.9478e-08\\
19	6.2189e-09\\
22	1.9856e-09\\
};
\addlegendentry{error}

\addplot [color=black]
  table[row sep=crcr]{%
4e1   5e-03\\
2e2   5e-03\\
2e2   7.2e-04\\
4e1   5e-03\\
};
\node[color=black,right] at (2e2,2e-3) {$M^{-(2-\alpha)}$};
\addplot [color=black]
  table[row sep=crcr]{%
5e0   5e-07\\
5e0   3e-09\\
15e0   3e-09\\
5e0   5e-07\\
};
\node[color=black,right] at (0.8e0,3e-8) {$M^{-(5-\alpha)}$};

\end{axis}

\end{tikzpicture}
    \end{center}\vspace{-0.6cm}
    \caption{$L_\infty(0,T;\, L_\infty(\Omega))$ errors for various methods vs. number of time steps $M$ for Example~\ref{Ex1}, $\alpha=0.8$,
             residual barrier $\RR_0$ with $\lambda=\pi^2$ and $\omega=\lambda/8$\label{fig:R0LinfLinf_08}}
  \end{figure}
  shows in the case $\alpha=0.8$ that the mesh adaptation process works similarly well in this regime. In fact, we tested the algorithm for values of up to $\alpha=0.999$
  and observed consistently good convergence behaviour.

  With the help of the second residual barrier  $\RR_1$, we can bound the error at a given final time, here $T=1$,
  while employing a weaker mesh refinement (as the resulting error is guaranteed to bounded by $TOL\cdot t^{\alpha-1}$).
  Figure~\ref{fig:R1Linf_04}
  \begin{figure}[htb]
    \begin{center}
%
%
\begin{tikzpicture}

\begin{axis}[%
width=0.6\textwidth,
height=0.28\textwidth,
scale only axis,
xmode=log,
xmin=1,
xmax=1000,
xminorticks=true,
xlabel style={font=\color{white!15!black}},
xlabel={$M$},
ymode=log,
ymin=1e-12,
ymax=0.02,
yminorticks=true,
axis background/.style={fill=white},
legend columns=1, 
legend style={at={(1.03,0.5)}, anchor=west, legend cell align=left, align=left, draw=white!15!black}
]
\addplot [color=blue, mark=*, mark options={solid, blue}]
  table[row sep=crcr]{%
6	0.01\\
10	0.0031623\\
18	0.001\\
34	0.00031623\\
64	0.0001\\
124	3.1623e-05\\
240	1e-05\\
472	3.1623e-06\\
940	1e-06\\
};
\addlegendentry{L1, TOL}
\addplot [color=blue,thick]
  table[row sep=crcr]{%
6	0.00059084\\
10	0.00025054\\
18	0.00010232\\
34	3.3574e-05\\
64	1.449e-05\\
124	4.8828e-06\\
240	1.5917e-06\\
472	6.2133e-07\\
940	1.9625e-07\\
};
\addlegendentry{error}

\addplot [color=purple, mark=*, mark options={solid, purple}]
  table[row sep=crcr]{%
5	0.01\\
7	0.0031623\\
10	0.001\\
14	0.00031623\\
22	0.0001\\
32	3.1623e-05\\
49	1e-05\\
73	3.1623e-06\\
113	1e-06\\
};
\addlegendentry{L1-2, TOL}
\addplot [color=purple,thick]
  table[row sep=crcr]{%
5	0.000343\\
7	0.00013048\\
10	4.3957e-05\\
14	2.7867e-05\\
22	9.5489e-06\\
32	2.5112e-06\\
49	8.4158e-07\\
73	4.6774e-07\\
113	1.0909e-07\\
};
\addlegendentry{error}

\addplot [color=black, mark=*, mark options={solid, black}]
  table[row sep=crcr]{%
2	0.01\\
3	0.0031623\\
5	0.001\\
7	0.00031623\\
10	0.0001\\
15	3.1623e-05\\
23	1e-05\\
35	3.1623e-06\\
54	1e-06\\
84	3.1623e-07\\
129	1e-07\\
201	3.1623e-08\\
312	1e-08\\
};
\addlegendentry{coll(2), TOL}
\addplot [color=black,thick]
  table[row sep=crcr]{%
2	0.00033311\\
3	6.3519e-05\\
5	6.5629e-06\\
7	1.9284e-06\\
10	7.9248e-07\\
15	1.6256e-07\\
23	8.2253e-08\\
35	3.4353e-08\\
54	2.3159e-08\\
84	2.2609e-09\\
129	5.4284e-09\\
201	1.0011e-09\\
312	1.4035e-10\\
};
\addlegendentry{error}
\addplot [color=teal, mark=*, mark options={solid, teal}]
  table[row sep=crcr]{%
2	0.0031623\\
3	0.001\\
4	0.00031623\\
5	0.0001\\
7	3.1623e-05\\
9	1e-05\\
11	3.1623e-06\\
14	1e-06\\
19	3.1623e-07\\
25	1e-07\\
32	3.1623e-08\\
42	1e-08\\
};
\addlegendentry{coll(4), TOL}
\addplot [color=teal,thick]
  table[row sep=crcr]{%
2	0.00012123\\
3	3.3022e-06\\
4	3.1985e-07\\
5	4.4577e-08\\
7	2.2866e-08\\
9	1.5774e-09\\
11	6.2484e-10\\
14	1.7409e-08\\
19	8.5125e-09\\
25	3.0982e-11\\
32	6.8992e-10\\
42	1.6625e-11\\
};
\addlegendentry{error}
\addplot [color=magenta, mark=*, mark options={solid, magenta}]
  table[row sep=crcr]{%
2	0.001\\
3	0.00031623\\
4	3.1623e-05\\
5	1e-05\\
6	3.1623e-06\\
7	1e-06\\
9	3.1623e-07\\
10	1e-07\\
12	3.1623e-08\\
15	1e-08\\
};
\addlegendentry{coll(8), TOL}
\addplot [color=magenta,thick]
  table[row sep=crcr]{%
2	3.3871e-05\\
3	1.0186e-06\\
4	2.016e-07\\
5	5.9147e-09\\
6	4.258e-09\\
7	1.9876e-08\\
9	5.1411e-10\\
10	3.5997e-09\\
12	6.6889e-10\\
15	9.2087e-12\\
};
\addlegendentry{error}

\addplot [color=black]
  table[row sep=crcr]{%
4e1   1e-03\\
2e2   1e-03\\
2e2   1.4e-04\\
4e1   1e-03\\
};
\node[color=black,right] at (2e2,4e-4) {$M^{-(2-\alpha)}$};
\addplot [color=black]
  table[row sep=crcr]{%
3e0   5e-09\\
3e0   3e-11\\
9e0  3e-11\\
3e0   5e-09\\
};
\node[color=black,right] at (0.95e0,3e-10) {$M^{-(5-\alpha)}$};
\end{axis}

\end{tikzpicture}
    \end{center}\vspace{-0.6cm}
    \caption{$L_\infty(\Omega)$ errors at $T=1$ for various methods vs. number of time steps $M$ for Example~\ref{Ex1}, $\alpha=0.4$,
             residual barrier $\RR_1$ with $\lambda=\pi^2$ and $\omega=\lambda/8$
             \label{fig:R1Linf_04}}
  \end{figure}
  shows the results for $\alpha=0.4$. We observe, that the error behaviour is not as smooth as for the other estimator
  for higher-order methods. This is partially caused by $Q_1=1.2$, while
   we remain in a preasymptotic regime, with very few time steps $M$ required by the adaptive algorithm
   ($Q_1$ closer to $1$ would produce smother error curves, but would require more iterations; see Figure~\ref{fig:Qcosts} below).


  \subsection{Experiments with Example~\ref{Ex2}. Algorithm parameters}
  The purpose of this section is twofold. First, experiments with
  Example~\ref{Ex2}, with an unknown solution that exhibits an initial weak singularity at $t=0$ (depending on $\gamma$)
  and a localised Gaussian pulse near $t=0.5$ (see Figure~\ref{fig:sol}) will illustrate that our algorithm is capable of adapting the temporal mesh
  to various solution singularities.
  Second, we will numerically investigate the parameters of the adaptive algorithm, in view of computational costs vs. the resulting errors.
  Thus, throughout this section, we
apply our algorithm to Example~\ref{Ex2} using the residual barrier  $\RR_0$ with the $L_\infty(\Omega)$ norm and
  $\lambda=\pi^2$, $w=\lambda/8$.

  \subsubsection*{Adaptivity for various solution singularities}
  Set $\alpha=0.4$ and $\gamma=0$ in Example~\ref{Ex2}. The adaptive time stepping was applied with $TOL=10^{-4}$, $Q_1=1.2$,
  for the collocation methods of order $m\in\{1,2,4,8\}$ (which includes the L1-method for $m=1$), with
  the generated time steps shown in Figure~\ref{fig:deltat}.
  \begin{figure}[htb]
    \begin{center}
      \input{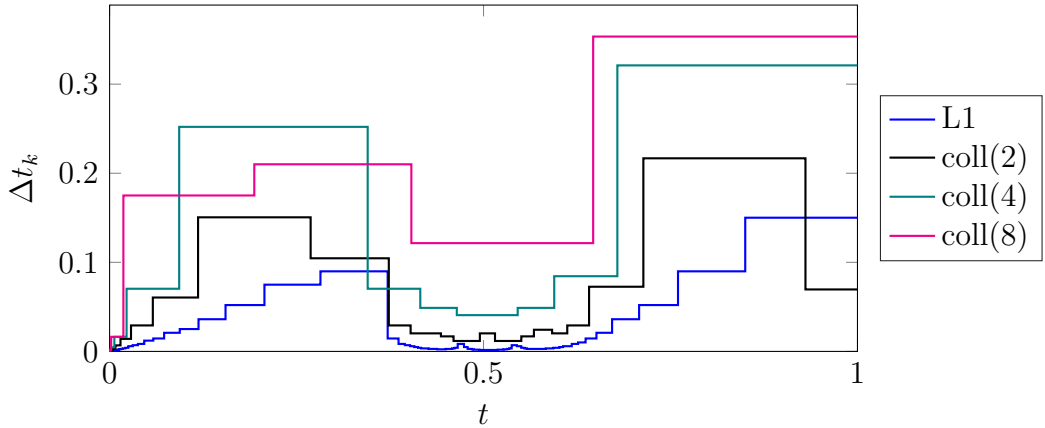} 
    \end{center}\vspace{-0.6cm}
    \caption{Time steps vs. time  for 4 collocation methods applied to Example~\ref{Ex2}, $\alpha=0.4$, $\gamma=0$, $Q_1=1.2$, $TOL=10^{-4}$,
    residual barrier $\RR_0$ with $L_\infty(\Omega)$ norm and  $\lambda=\pi^2$, $w=\lambda/8$
    \label{fig:deltat}}
  \end{figure}
  We see, that indeed the meshes are refined near the two problematic points with a finer mesh for lower-order methods like the L1 method.
  We also considered $\alpha=0.8$ and varied $\gamma\in[0,\alpha]$.
  The adaptivity to the initial singularity of type $t^{\gamma+\alpha}$ as $\gamma$ changes is clearly shown in Figure~\ref{fig:meshgamma}.
  {Note also that  when the solution is of type $t^\alpha$,
  the adaptive temporal mesh becomes  similar to the optimal graded mesh, described in Remark~\ref{rem_apriori}, as is more clearly shown in
   \cite[Fig.~1]{Kopteva22}.}
  \begin{figure}[bht]
    \begin{center}
      \input{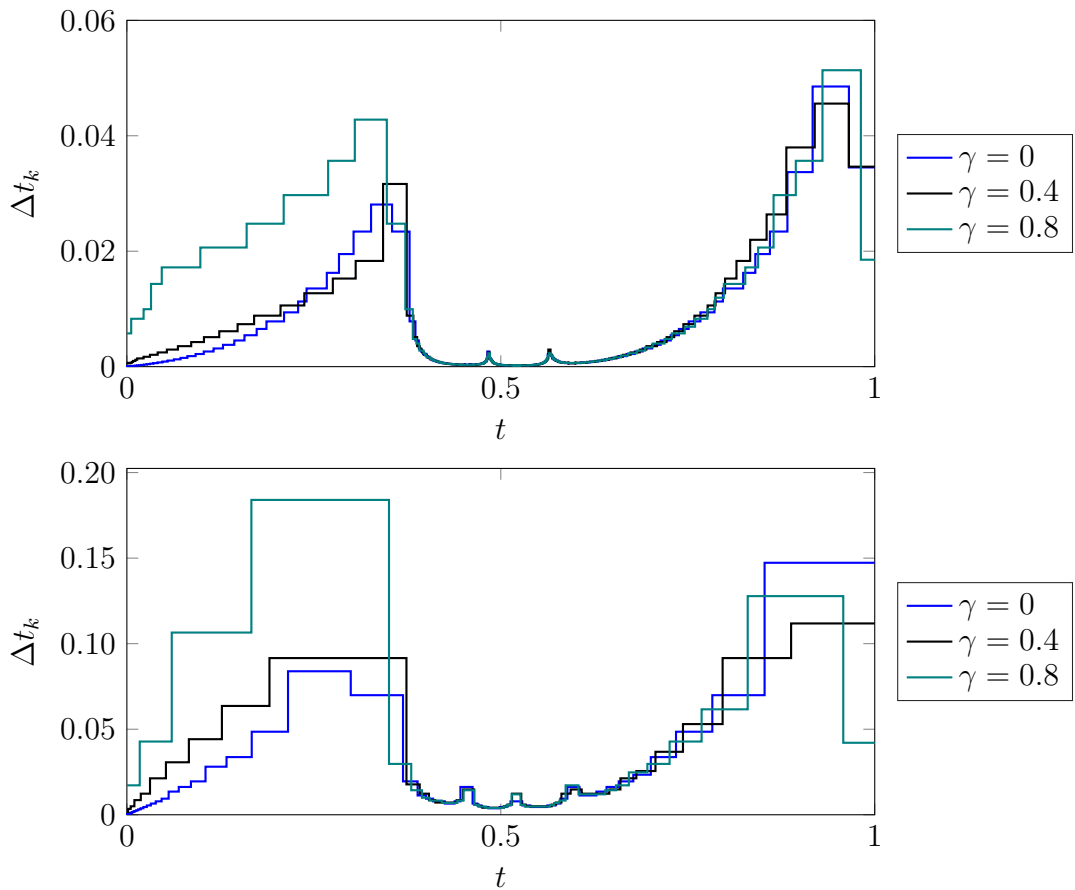}\\ 
%
%
%
\begin{tikzpicture}

\begin{axis}[%
width=0.6\textwidth,
height=0.28\textwidth,
scale only axis,
xmin=0,xmax=1,
xlabel={$t$},xtick={0,0.5,1},
ymin=0,
ylabel={$\Delta t_k$},yticklabels={,0,0.05,0.10,0.15,0.20},
axis background/.style={fill=white},
legend style={at={(1.03,0.5)}, anchor=west, legend cell align=left, align=left, draw=white!15!black}
]

\addplot [color=blue,thick]
  table[row sep=crcr]{%
0	1.911e-05\\
1.911e-05	1.911e-05\\
1.911e-05	4.7553e-05\\
6.6663e-05	4.7553e-05\\
6.6663e-05	8.2171e-05\\
0.00014883	8.2171e-05\\
0.00014883	0.00011833\\
0.00026716	0.00011833\\
0.00026716	0.00017039\\
0.00043755	0.00017039\\
0.00043755	0.00020447\\
0.00064202	0.00020447\\
0.00064202	0.00024536\\
0.00088738	0.00024536\\
0.00088738	0.00029443\\
0.0011818	0.00029443\\
0.0011818	0.00035332\\
0.0015351	0.00035332\\
0.0015351	0.00042398\\
0.0019591	0.00042398\\
0.0019591	0.00050878\\
0.0024679	0.00050878\\
0.0024679	0.00061054\\
0.0030784	0.00061054\\
0.0030784	0.00073264\\
0.0038111	0.00073264\\
0.0038111	0.00087917\\
0.0046902	0.00087917\\
0.0046902	0.001055\\
0.0057452	0.001055\\
0.0057452	0.001266\\
0.0070113	0.001266\\
0.0070113	0.0015192\\
0.0085305	0.0015192\\
0.0085305	0.0018231\\
0.010354	0.0018231\\
0.010354	0.0021877\\
0.012541	0.0021877\\
0.012541	0.0026252\\
0.015166	0.0026252\\
0.015166	0.0031502\\
0.018317	0.0031502\\
0.018317	0.0037803\\
0.022097	0.0037803\\
0.022097	0.0045363\\
0.026633	0.0045363\\
0.026633	0.0054436\\
0.032077	0.0054436\\
0.032077	0.0065323\\
0.038609	0.0065323\\
0.038609	0.0078388\\
0.046448	0.0078388\\
0.046448	0.0094065\\
0.055854	0.0094065\\
0.055854	0.013545\\
0.0694	0.013545\\
0.0694	0.016254\\
0.085654	0.016254\\
0.085654	0.019505\\
0.10516	0.019505\\
0.10516	0.028088\\
0.13325	0.028088\\
0.13325	0.033705\\
0.16695	0.033705\\
0.16695	0.048536\\
0.21549	0.048536\\
0.21549	0.08387\\
0.29936	0.08387\\
0.29936	0.069891\\
0.36925	0.069891\\
0.36925	0.019505\\
0.38875	0.019505\\
0.38875	0.011288\\
0.40004	0.011288\\
0.40004	0.0094065\\
0.40945	0.0094065\\
0.40945	0.0078388\\
0.41729	0.0078388\\
0.41729	0.0078388\\
0.42513	0.0078388\\
0.42513	0.0065323\\
0.43166	0.0065323\\
0.43166	0.0065323\\
0.43819	0.0065323\\
0.43819	0.0078388\\
0.44603	0.0078388\\
0.44603	0.016254\\
0.46228	0.016254\\
0.46228	0.0065323\\
0.46882	0.0065323\\
0.46882	0.0054436\\
0.47426	0.0054436\\
0.47426	0.0045363\\
0.4788	0.0045363\\
0.4788	0.0045363\\
0.48333	0.0045363\\
0.48333	0.0037803\\
0.48711	0.0037803\\
0.48711	0.0037803\\
0.49089	0.0037803\\
0.49089	0.0037803\\
0.49467	0.0037803\\
0.49467	0.0037803\\
0.49845	0.0037803\\
0.49845	0.0045363\\
0.50299	0.0045363\\
0.50299	0.0045363\\
0.50753	0.0045363\\
0.50753	0.0054436\\
0.51297	0.0054436\\
0.51297	0.0078388\\
0.52081	0.0078388\\
0.52081	0.0078388\\
0.52865	0.0078388\\
0.52865	0.0054436\\
0.53409	0.0054436\\
0.53409	0.0054436\\
0.53954	0.0054436\\
0.53954	0.0045363\\
0.54407	0.0045363\\
0.54407	0.0045363\\
0.54861	0.0045363\\
0.54861	0.0045363\\
0.55314	0.0045363\\
0.55314	0.0045363\\
0.55768	0.0045363\\
0.55768	0.0045363\\
0.56222	0.0045363\\
0.56222	0.0054436\\
0.56766	0.0054436\\
0.56766	0.0054436\\
0.5731	0.0054436\\
0.5731	0.0065323\\
0.57964	0.0065323\\
0.57964	0.0094065\\
0.58904	0.0094065\\
0.58904	0.016254\\
0.6053	0.016254\\
0.6053	0.011288\\
0.61659	0.011288\\
0.61659	0.011288\\
0.62787	0.011288\\
0.62787	0.013545\\
0.64142	0.013545\\
0.64142	0.016254\\
0.65767	0.016254\\
0.65767	0.019505\\
0.67718	0.019505\\
0.67718	0.023406\\
0.70059	0.023406\\
0.70059	0.033705\\
0.73429	0.033705\\
0.73429	0.048536\\
0.78283	0.048536\\
0.78283	0.069891\\
0.85272	0.069891\\
0.85272	0.14728\\
1	0.14728\\
};
\addlegendentry{$\gamma=0$}

\addplot [color=black,thick]
  table[row sep=crcr]{%
0	0.0016589\\
0.0016589	0.0016589\\
0.0016589	0.0034399\\
0.0050987	0.0034399\\
0.0050987	0.0049534\\
0.010052	0.0049534\\
0.010052	0.0085595\\
0.018612	0.0085595\\
0.018612	0.012326\\
0.030937	0.012326\\
0.030937	0.021299\\
0.052236	0.021299\\
0.052236	0.03067\\
0.082906	0.03067\\
0.082906	0.044165\\
0.12707	0.044165\\
0.12707	0.063597\\
0.19067	0.063597\\
0.19067	0.09158\\
0.28225	0.09158\\
0.28225	0.09158\\
0.37383	0.09158\\
0.37383	0.017749\\
0.39158	0.017749\\
0.39158	0.012326\\
0.4039	0.012326\\
0.4039	0.0085595\\
0.41246	0.0085595\\
0.41246	0.0071329\\
0.4196	0.0071329\\
0.4196	0.0071329\\
0.42673	0.0071329\\
0.42673	0.0071329\\
0.43386	0.0071329\\
0.43386	0.0071329\\
0.44099	0.0071329\\
0.44099	0.0085595\\
0.44955	0.0085595\\
0.44955	0.014791\\
0.46434	0.014791\\
0.46434	0.0059441\\
0.47029	0.0059441\\
0.47029	0.0049534\\
0.47524	0.0049534\\
0.47524	0.0041278\\
0.47937	0.0041278\\
0.47937	0.0041278\\
0.4835	0.0041278\\
0.4835	0.0041278\\
0.48763	0.0041278\\
0.48763	0.0041278\\
0.49175	0.0041278\\
0.49175	0.0041278\\
0.49588	0.0041278\\
0.49588	0.0041278\\
0.50001	0.0041278\\
0.50001	0.0041278\\
0.50414	0.0041278\\
0.50414	0.0049534\\
0.50909	0.0049534\\
0.50909	0.0059441\\
0.51503	0.0059441\\
0.51503	0.012326\\
0.52736	0.012326\\
0.52736	0.0059441\\
0.5333	0.0059441\\
0.5333	0.0049534\\
0.53826	0.0049534\\
0.53826	0.0049534\\
0.54321	0.0049534\\
0.54321	0.0049534\\
0.54816	0.0049534\\
0.54816	0.0049534\\
0.55312	0.0049534\\
0.55312	0.0049534\\
0.55807	0.0049534\\
0.55807	0.0049534\\
0.56302	0.0049534\\
0.56302	0.0049534\\
0.56798	0.0049534\\
0.56798	0.0059441\\
0.57392	0.0059441\\
0.57392	0.0071329\\
0.58105	0.0071329\\
0.58105	0.012326\\
0.59338	0.012326\\
0.59338	0.014791\\
0.60817	0.014791\\
0.60817	0.012326\\
0.6205	0.012326\\
0.6205	0.012326\\
0.63282	0.012326\\
0.63282	0.012326\\
0.64515	0.012326\\
0.64515	0.014791\\
0.65994	0.014791\\
0.65994	0.021299\\
0.68124	0.021299\\
0.68124	0.025558\\
0.7068	0.025558\\
0.7068	0.036804\\
0.7436	0.036804\\
0.7436	0.052998\\
0.7966	0.052998\\
0.7966	0.09158\\
0.88818	0.09158\\
0.88818	0.11182\\
1	0.11182\\
};
\addlegendentry{$\gamma=0.4$}

\addplot [color=teal,thick]
  table[row sep=crcr]{%
0	0.017199\\
0.017199	0.017199\\
0.017199	0.042797\\
0.059997	0.042797\\
0.059997	0.10649\\
0.16649	0.10649\\
0.16649	0.18402\\
0.35051	0.18402\\
0.35051	0.02972\\
0.38023	0.02972\\
0.38023	0.014333\\
0.39456	0.014333\\
0.39456	0.0099533\\
0.40452	0.0099533\\
0.40452	0.0082944\\
0.41281	0.0082944\\
0.41281	0.0082944\\
0.42111	0.0082944\\
0.42111	0.006912\\
0.42802	0.006912\\
0.42802	0.006912\\
0.43493	0.006912\\
0.43493	0.006912\\
0.44184	0.006912\\
0.44184	0.0082944\\
0.45014	0.0082944\\
0.45014	0.014333\\
0.46447	0.014333\\
0.46447	0.00576\\
0.47023	0.00576\\
0.47023	0.0048\\
0.47503	0.0048\\
0.47503	0.0048\\
0.47983	0.0048\\
0.47983	0.004\\
0.48383	0.004\\
0.48383	0.004\\
0.48783	0.004\\
0.48783	0.004\\
0.49183	0.004\\
0.49183	0.004\\
0.49583	0.004\\
0.49583	0.004\\
0.49983	0.004\\
0.49983	0.004\\
0.50383	0.004\\
0.50383	0.0048\\
0.50863	0.0048\\
0.50863	0.00576\\
0.51439	0.00576\\
0.51439	0.011944\\
0.52633	0.011944\\
0.52633	0.00576\\
0.53209	0.00576\\
0.53209	0.0048\\
0.53689	0.0048\\
0.53689	0.0048\\
0.54169	0.0048\\
0.54169	0.0048\\
0.54649	0.0048\\
0.54649	0.0048\\
0.55129	0.0048\\
0.55129	0.0048\\
0.55609	0.0048\\
0.55609	0.0048\\
0.56089	0.0048\\
0.56089	0.0048\\
0.56569	0.0048\\
0.56569	0.00576\\
0.57145	0.00576\\
0.57145	0.006912\\
0.57836	0.006912\\
0.57836	0.0082944\\
0.58666	0.0082944\\
0.58666	0.017199\\
0.60386	0.017199\\
0.60386	0.011944\\
0.6158	0.011944\\
0.6158	0.011944\\
0.62775	0.011944\\
0.62775	0.011944\\
0.63969	0.011944\\
0.63969	0.014333\\
0.65402	0.014333\\
0.65402	0.017199\\
0.67122	0.017199\\
0.67122	0.024767\\
0.69599	0.024767\\
0.69599	0.02972\\
0.72571	0.02972\\
0.72571	0.042797\\
0.76851	0.042797\\
0.76851	0.061628\\
0.83013	0.061628\\
0.83013	0.12779\\
0.95793	0.12779\\
0.95793	0.042073\\
1	0.042073\\
};
\addlegendentry{$\gamma=0.8$}

\end{axis}

\end{tikzpicture}
    \end{center}\vspace{-0.6cm}
    \caption{Time steps vs. time for Example~\ref{Ex2} for varying $\gamma$ and the L1 method (top) and coll(2) (bottom), $\alpha=0.8$, $Q_1=1.2$, $TOL=10^{-4}$,
    residual barrier $\RR_0$ with $L_\infty(\Omega)$ norm and  $\lambda=\pi^2$, $w=\lambda/8$ \label{fig:meshgamma}}
  \end{figure}

  \subsubsection*{Parameter $Q_1$}
  Next, we want to investigate the influence of the value of $Q_1>1$ on the number of time steps.
  In Figure~\ref{fig:NvsQ}
  \begin{figure}[htb]
    \begin{center}
%
%
%
\begin{tikzpicture}

\begin{axis}[%
width=0.4\textwidth,
height=0.2\textwidth,
scale only axis,
xmin=1,xmax=1.5,
xlabel style={font=\color{white!15!black}},
xlabel={$Q_1$},
ymin=0,
ylabel style={font=\color{white!15!black}},
ylabel={$M$},
axis background/.style={fill=white},
axis x line*=bottom,
axis y line*=left,
legend style={at={(1.03,0.5)}, anchor=west, legend cell align=left, align=left, draw=white!15!black}
]
\addplot [color=blue,thick]
  table[row sep=crcr]{%
1.01	166\\
1.025	167\\
1.05	169\\
1.075	171\\
1.1	172\\
1.2	178\\
1.3	183\\
1.5	197\\
};
\addlegendentry{L1}
\addplot [color=black,thick]
  table[row sep=crcr]{%
1.01	41\\
1.025	42\\
1.05	42\\
1.075	42\\
1.1	43\\
1.2	45\\
1.3	46\\
1.5	50\\
};
\addlegendentry{coll(2)}
\addplot [color=teal,thick]
  table[row sep=crcr]{%
1.01	19\\
1.025	19\\
1.05	19\\
1.075	19\\
1.1	19\\
1.2	20\\
1.3	20\\
1.5	21\\
};
\addlegendentry{coll(4)}
\addplot [color=magenta,thick]
  table[row sep=crcr]{%
1.01	12\\
1.025	12\\
1.05	12\\
1.075	12\\
1.1	12\\
1.2	12\\
1.3	13\\
1.5	13\\
};
\addlegendentry{coll(8)}
\end{axis}
\end{tikzpicture}%
    \end{center}\vspace{-0.6cm}
    \caption{Number of time intervals $M$ vs. $Q_1$ for 4 collocation methods applied to Example~\ref{Ex2}, $\alpha=0.4$, $\gamma=0$, $TOL=10^{-4}$,
    residual barrier $\RR_0$ with $L_\infty(\Omega)$ norm and  $\lambda=\pi^2$, $w=\lambda/8$\label{fig:NvsQ}}
  \end{figure}
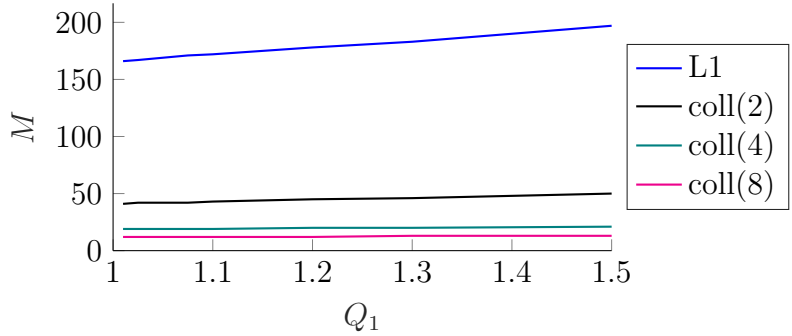
  we see for $m\in\{1,2,4,8\}$ the number of time steps $M$ in the adaptively-generated mesh for varying values of $Q_1$.
  We observe that the number of time steps increases with increasing $Q_1$, but very moderately in general, and even more so for higher-order methods.
  Thus, for an optimal mesh a relatively small value of $Q_1>1$ should be taken.

  On the other hand, smaller values of $Q_1$ may lead to many iterations and, therefore, higher computational costs, as shown in Figure~\ref{fig:Qcosts}.
  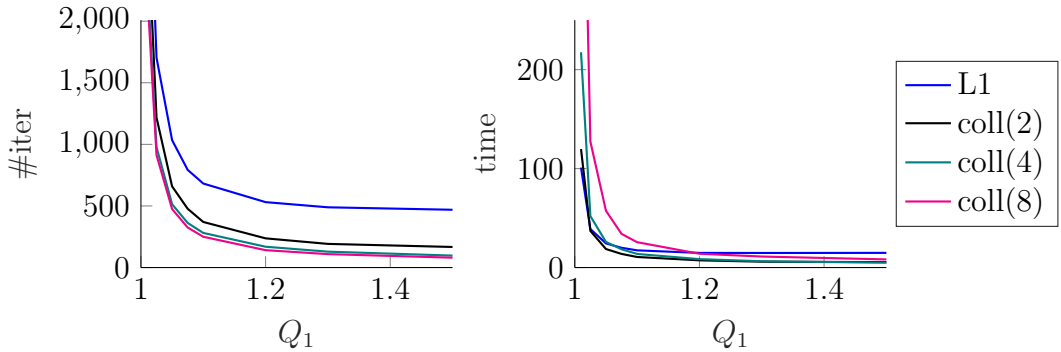
\begin{figure}[htb]
    \begin{center}
%
%
%
\begin{tikzpicture}

\begin{axis}[%
width=0.25\textwidth,
height=0.2\textwidth,
scale only axis,
xmin=1,
xmax=1.5,
xlabel style={font=\color{white!15!black}},
xlabel={$Q_1$},
ymin=0,
ymax=2008,
ylabel style={font=\color{white!15!black}},
ylabel={\#iter},
axis background/.style={fill=white},
axis x line*=bottom,
axis y line*=left,
legend style={legend cell align=left, align=left, draw=white!15!black}
]
\addplot [color=blue,thick]
  table[row sep=crcr]{%
1.01	3732\\
1.025	1703\\
1.05	1034\\
1.075	793\\
1.1	682\\
1.2	531\\
1.3	489\\
1.5	469\\
};
\addplot [color=black,thick]
  table[row sep=crcr]{%
1.01	2876\\
1.025	1217\\
1.05	659\\
1.075	477\\
1.1	370\\
1.2	237\\
1.3	192\\
1.5	167\\
};
\addplot [color=teal,thick]
  table[row sep=crcr]{%
1.01	2369\\
1.025	977\\
1.05	514\\
1.075	362\\
1.1	281\\
1.2	169\\
1.3	129\\
1.5	97\\
};
\addplot [color=magenta,thick]
  table[row sep=crcr]{%
1.01	2249\\
1.025	913\\
1.05	475\\
1.075	325\\
1.1	250\\
1.2	141\\
1.3	109\\
1.5	80\\
};
\end{axis}
\end{tikzpicture}
%
%
%
\begin{tikzpicture}

\begin{axis}[%
width=0.25\textwidth,
height=0.2\textwidth,
scale only axis,
xmin=1,
xmax=1.5,
xlabel style={font=\color{white!15!black}},
xlabel={$Q_1$},
ymin=0,
ymax=250,
ylabel style={font=\color{white!15!black}},
ylabel={time},
axis background/.style={fill=white},
axis x line*=bottom,
axis y line*=left,
legend columns=1, 
legend style={at={(1.03,0.5)}, anchor=west, legend cell align=left, align=left, draw=white!15!black}
]
\addplot [color=blue,thick]
  table[row sep=crcr]{%
1.01	101.03\\
1.025	38.915\\
1.05	24.416\\
1.075	19.974\\
1.1	17.383\\
1.2	14.801\\
1.3	14.727\\
1.5	14.83\\
};
\addlegendentry{L1}
\addplot [color=black,thick]
  table[row sep=crcr]{%
1.01	119.74\\
1.025	37.063\\
1.05	18.709\\
1.075	13.777\\
1.1	10.678\\
1.2	7.2965\\
1.3	5.9161\\
1.5	5.4692\\
};
\addlegendentry{coll(2)}
\addplot [color=teal,thick]
  table[row sep=crcr]{%
1.01	217.58\\
1.025	52.002\\
1.05	25.905\\
1.075	18.477\\
1.1	13.746\\
1.2	8.5144\\
1.3	6.3815\\
1.5	4.9046\\
};
\addlegendentry{coll(4)}
\addplot [color=magenta,thick]
  table[row sep=crcr]{%
1.01	561.52\\
1.025	127.35\\
1.05	57.408\\
1.075	34.041\\
1.1	25.634\\
1.2	13.953\\
1.3	11.12\\
1.5	8.229\\
};
\addlegendentry{coll(8)}
\end{axis}
\end{tikzpicture}
    \end{center}\vspace{-0.6cm}
    \caption{Computational costs vs. $Q_1$ for 4 collocation methods applied to Example~\ref{Ex2}, $\alpha=0.4$, $\gamma=0$, $TOL=10^{-4}$,
    residual barrier $\RR_0$ with $L_\infty(\Omega)$ norm and  $\lambda=\pi^2$, $w=\lambda/8$
\label{fig:Qcosts}}
  \end{figure}
  We observe a drastic increase of algorithm iterations, and, hence, computational time and costs for as $Q_1$ becomes close to $1$.
  \begin{itemize}
    \item We conclude that a small value of $Q_1=1.1$ or $Q_1=1.2$ seems to be a good compromise between computational
          costs and quality of the mesh.
    \item A higher order method leads to a much smaller number of time steps at similar costs for adapting the mesh.
  \end{itemize}
  Figure~\ref{fig:TOLcosts}
  \begin{figure}[htb]
    \begin{center}
%
%
%
\begin{tikzpicture}

\begin{axis}[%
width=0.25\textwidth,
height=0.2\textwidth,
scale only axis,
xmode=log,
xmin=1e-06,
xmax=0.01,
xminorticks=true,
xlabel style={font=\color{white!15!black}},
xlabel={$TOL$},
ymode=log,
ymin=10,
ymax=10000,
yminorticks=true,
ylabel style={font=\color{white!15!black}},
ylabel={\#iter},
axis background/.style={fill=white},
legend style={legend cell align=left, align=left, draw=white!15!black}
]
\addplot [color=blue,thick]
  table[row sep=crcr]{%
0.01	120\\
0.0031623	162\\
0.001	223\\
0.00031623	336\\
0.0001	531\\
3.1623e-05	891\\
1e-05	1614\\
3.1623e-06	3056\\
1e-06	6057\\
};
\addplot [color=black,thick]
  table[row sep=crcr]{%
0.01	91\\
0.0031623	117\\
0.001	140\\
0.00031623	183\\
0.0001	237\\
3.1623e-05	308\\
1e-05	388\\
3.1623e-06	517\\
1e-06	718\\
};
\addplot [color=teal,thick]
  table[row sep=crcr]{%
0.01	69\\
0.0031623	95\\
0.001	122\\
0.00031623	135\\
0.0001	169\\
3.1623e-05	197\\
1e-05	233\\
3.1623e-06	274\\
1e-06	313\\
};
\addplot [color=magenta,thick]
  table[row sep=crcr]{%
0.01	62\\
0.0031623	80\\
0.001	99\\
0.00031623	120\\
0.0001	141\\
3.1623e-05	160\\
1e-05	185\\
3.1623e-06	209\\
1e-06	234\\
};
\end{axis}
\end{tikzpicture}
%
%
%
\begin{tikzpicture}

\begin{axis}[%
width=0.25\textwidth,
height=0.2\textwidth,
scale only axis,
xmode=log,
xmin=1e-06,
xmax=0.01,
xminorticks=true,
xlabel style={font=\color{white!15!black}},
xlabel={$TOL$},
ymode=log,
ymin=1,
ymax=1400,
yminorticks=true,
ylabel style={font=\color{white!15!black}},
ylabel={time},
axis background/.style={fill=white},
legend columns=1, 
legend style={at={(1.03,0.5)}, anchor=west, legend cell align=left, align=left, draw=white!15!black}
]
\addplot [color=blue,thick]
  table[row sep=crcr]{%
0.01	1.9923\\
0.0031623	3.0637\\
0.001	4.9118\\
0.00031623	8.9588\\
0.0001	18.516\\
3.1623e-05	43.856\\
1e-05	125.38\\
3.1623e-06	386.2\\
1e-06	1349.4\\
};
\addlegendentry{L1}
\addplot [color=black,thick]
  table[row sep=crcr]{%
0.01	2.66\\
0.0031623	3.4007\\
0.001	4.5574\\
0.00031623	6.5949\\
0.0001	9.3772\\
3.1623e-05	13.489\\
1e-05	19.897\\
3.1623e-06	32.012\\
1e-06	56.086\\
};
\addlegendentry{coll(2)}
\addplot [color=teal,thick]
  table[row sep=crcr]{%
0.01	3.8328\\
0.0031623	5.1719\\
0.001	6.9649\\
0.00031623	8.357\\
0.0001	11.845\\
3.1623e-05	16.675\\
1e-05	18.205\\
3.1623e-06	21.85\\
1e-06	29.053\\
};
\addlegendentry{coll(4)}
\addplot [color=magenta,thick]
  table[row sep=crcr]{%
0.01	8.5316\\
0.0031623	12.298\\
0.001	13.267\\
0.00031623	17.012\\
0.0001	21.727\\
3.1623e-05	24.544\\
1e-05	29.562\\
3.1623e-06	34.15\\
1e-06	39.529\\
};
\addlegendentry{coll(8)}
\end{axis}

\end{tikzpicture}
    \end{center}\vspace{-0.6cm}
    \caption{Computational costs vs. $TOL$ for 4 collocation methods applied to Example~\ref{Ex2}, $\alpha=0.4$, $\gamma=0$, $Q_1=1.2$,
    residual barrier $\RR_0$ with $L_\infty(\Omega)$ norm and  $\lambda=\pi^2$, $w=\lambda/8$\label{fig:TOLcosts}}
  \end{figure}
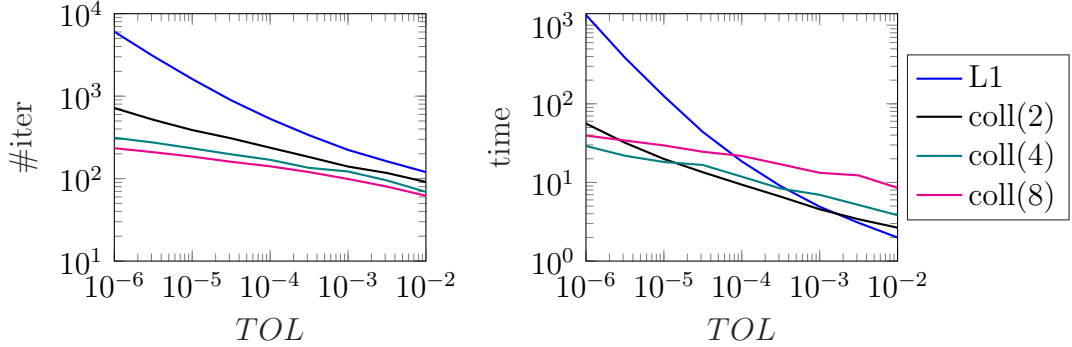
  shows the number of iterations and the corresponding computational time for varying values of $TOL$ and fixed $Q_1=1.2$. As to be expected,  smaller
  values of $TOL$ yield smaller errors, but lead to higher computational costs. This is even stronger observable for the lowest-order method.
  \begin{itemize}
    \item For a given value of $TOL$, higher-order method are less costly.
  \end{itemize}

  Finally, Figure~\ref{fig:res}
  \begin{figure}[htb]
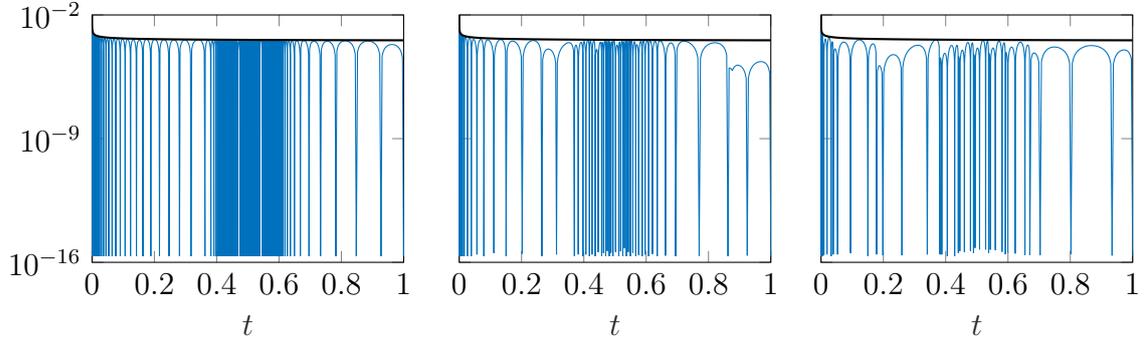

    \begin{center}
      \input{src/R0_res1}\hspace{2pt} 
      \input{src/R0_res2} 
      \input{src/R0_res4} 
    \end{center}\vspace{-0.6cm}
    \caption{Residuals  and the algorithm's residual bounds
    for collocation methods of order $m\in\{1,2,4\}$ (left to right)
   applied to Example~\ref{Ex2}, $\alpha=0.4$, $\gamma=0$, $Q_1=1.2$, $TOL=10^{-4}$,
    residual barrier $\RR_0$ with $L_\infty(\Omega)$ norm and  $\lambda=\pi^2$, $w=\lambda/8$\label{fig:res}}
  \end{figure}
  shows the behaviour of the residuals. The upper bound of~\eqref{barrier_bounds_alg_a}, imposed on the residual by the algorithm, is depicted in black,
  and, as expected, all residuals are below this bound.

  \section{Conclusions}

  Time-fractional parabolic equations with a Caputo time derivative were considered.
  For such equations, we have reviewed and generalized the a-posteriori error estimates from \cite{Kopteva22}, and
  improved the earlier time stepping algorithm based on this theory.
  A number of temporal discretizations were considered, including the L1 method, an L1-2 method, and continuous collocation methods of arbitrary order.
  A stable and efficient implementation of the resulting algorithm was described,
 which is essential in the context of higher-order methods.
 It was demonstrated that high-order methods (of order up to as high as 8)
    exhibit a huge improvement in the accuracy when the time steps are chosen adaptively, and, furthermore,
 adaptive temporal meshes yield optimal convergence rates  in the presence of various solution singularities.

  \section*{Funding}
  The second author was partially supported by  Science Foundation Ireland under Grant number 18/CRT/6049.

 \bibliographystyle{plain}
\bibliography{lit}

\begin{thebibliography}{10}

\bibitem{B_Mark_apost}
Lehel Banjai and Charalambos~G. Makridakis.
\newblock A posteriori error analysis for approximations of time-fractional
  subdiffusion problems.
\newblock {\em Math. Comp.}, 91(336):1711--1737, 2022.

\bibitem{Brunner04}
Hermann Brunner.
\newblock {\em Collocation methods for {V}olterra integral and related
  functional differential equations}, volume~15 of {\em Cambridge Monographs on
  Applied and Computational Mathematics}.
\newblock Cambridge University Press, Cambridge, 2004.

\bibitem{Diet10}
Kai Diethelm.
\newblock {\em The analysis of fractional differential equations}, volume 2004
  of {\em Lecture Notes in Mathematics}.
\newblock Springer-Verlag, Berlin, 2010.
\newblock An application-oriented exposition using differential operators of
  Caputo type.

\bibitem{Evans10}
Lawrence~C. Evans.
\newblock {\em Partial differential equations}, volume~19 of {\em Graduate
  Studies in Mathematics}.
\newblock American Mathematical Society, Providence, RI, second edition, 2010.

\bibitem{GSZZ14}
Guang-hua Gao, Zhi-zhong Sun, and Hong-wei Zhang.
\newblock A new fractional numerical differentiation formula to approximate the
  {C}aputo fractional derivative and its applications.
\newblock {\em J. Comput. Phys.}, 259:33--50, 2014.

\bibitem{JLZ19}
Bangti Jin, Raytcho Lazarov, and Zhi Zhou.
\newblock Numerical methods for time-fractional evolution equations with
  nonsmooth data: a concise overview.
\newblock {\em Comput. Methods Appl. Mech. Engrg.}, 346:332--358, 2019.

\bibitem{Kopteva19}
Natalia Kopteva.
\newblock Error analysis of the {L}1 method on graded and uniform meshes for a
  fractional-derivative problem in two and three dimensions.
\newblock {\em Math. Comp.}, 88(319):2135--2155, 2019.

\bibitem{Kopteva21}
Natalia Kopteva.
\newblock Error analysis of an {L}2-type method on graded meshes for a
  fractional-order parabolic problem.
\newblock {\em Math. Comp.}, 90(327):19--40, 2021.

\bibitem{Kopteva_ML_max_pr}
Natalia Kopteva.
\newblock Maximum principle for time-fractional parabolic equations with a
  reaction coefficient of arbitrary sign.
\newblock {\em Appl. Math. Lett.}, 132:Paper No. 108209, 7, 2022.

\bibitem{Kopteva22}
Natalia Kopteva.
\newblock Pointwise-in-time a posteriori error control for time-fractional
  parabolic equations.
\newblock {\em Appl. Math. Lett.}, 123:Paper No. 107515, 8, 2022.

\bibitem{Kopteva_Meng}
Natalia Kopteva and Xiangyun Meng.
\newblock Error analysis for a fractional-derivative parabolic problem on
  quasi-graded meshes using barrier functions.
\newblock {\em SIAM J. Numer. Anal.}, 58(2):1217--1238, 2020.

\bibitem{Kopt_Stynes_apost22}
Natalia Kopteva and Martin Stynes.
\newblock A {P}osteriori {E}rror {A}nalysis for {V}ariable-{C}oefficient
  {M}ultiterm {T}ime-{F}ractional {S}ubdiffusion {E}quations.
\newblock {\em J. Sci. Comput.}, 92(2):Paper No. 73, 2022.

\bibitem{Liao_etal_sinum2018}
Hong-lin Liao, Dongfang Li, and Jiwei Zhang.
\newblock Sharp error estimate of the nonuniform {L}1 formula for linear
  reaction-subdiffusion equations.
\newblock {\em SIAM J. Numer. Anal.}, 56(2):1112--1133, 2018.

\bibitem{Liao_etal_sinum2019}
Hong-lin Liao, Dongfang Li, and Jiwei Zhang.
\newblock Sharp error estimate of the nonuniform {L}1 formula for linear
  reaction-subdiffusion equations.
\newblock {\em SIAM J. Numer. Anal.}, 56(2):1112--1133, 2018.

\bibitem{Luchko_Yam_2017}
Yuri Luchko and Masahiro Yamamoto.
\newblock On the maximum principle for a time-fractional diffusion equation.
\newblock {\em Fract. Calc. Appl. Anal.}, 20(5):1131--1145, 2017.

\bibitem{LX16}
Chunwan Lv and Chuanju Xu.
\newblock Error analysis of a high order method for time-fractional diffusion
  equations.
\newblock {\em SIAM J. Sci. Comput.}, 38(5):A2699--A2724, 2016.

\bibitem{McLean_2020}
William McLean.
\newblock Implementation of high-order, discontinuous {G}alerkin time stepping
  for fractional diffusion problems.
\newblock {\em The ANZIAM Journal}, 62(2):121--147, 2020.

\bibitem{Shampine08}
Lawrence~F. Shampine.
\newblock Vectorized adaptive quadrature in {M}atlab.
\newblock {\em J. Comput. Appl. Math.}, 211(2):131--140, 2008.

\bibitem{Stynes21}
Martin Stynes.
\newblock A survey of the {L}1 scheme in the discretisation of time-fractional
  problems.
\newblock 10.13140/RG.2.2.27671.60322.

\bibitem{SORG17}
Martin Stynes, Eugene O'Riordan, and Jos\'e~Luis Gracia.
\newblock Error analysis of a finite difference method on graded meshes for a
  time-fractional diffusion equation.
\newblock {\em SIAM J. Numer. Anal.}, 55(2):1057--1079, 2017.

\end{thebibliography}

\end{document}